\theoremstyle{thmstyleone}%
\newtheorem{theorem}{Theorem}
\newtheorem{assumption}[theorem]{Assumption}%
\newtheorem{lemma}[theorem]{Lemma}%
\newtheorem{corollary}[theorem]{Corollary}%
\theoremstyle{thmstyletwo}%
\theoremstyle{thmstylethree}%
\newcommand{\N}{\mathbb{N}}
\newcommand{\E}{\mathbb{E}}
\renewcommand{\P}{\mathbb{P}}
\newcommand{\dd}{\text{d}}
\begin{document}

\title[An explicit approximation for SCHE with additive noise]{Strong convergence rates of an explicit scheme for stochastic Cahn--Hilliard equation with additive noise}


\author[1,2]{\fnm{Meng} \sur{Cai}}\email{mcai@lsec.cc.ac.cn}

\author[3]{\fnm{Ruisheng} \sur{Qi}}\email{qiruisheng123@126.com}

\author*[1]{\fnm{Xiaojie} \sur{Wang}}\email{x.j.wang7@csu.edu.cn; x.j.wang7@gmail.com}

\affil[1]{\orgdiv{School of Mathematics and Statistics, HNP-LAMA}, \orgname{Central South University}, \orgaddress{\city{Changsha}, \postcode{410083}, \country{China}}}

\affil[2]{\orgdiv{LSEC, ICMSEC, Academy of Mathematics and Systems Science}, \orgname{Chinese Academy of
Sciences}, \orgaddress{\city{Beijing}, \postcode{100190}, \country{China}}}

\affil[3]{\orgdiv{School of Mathematics and Statistics}, \orgname{Yancheng Teachers University}, \orgaddress{ \city{Yancheng}, \postcode{224002}, \country{China}}}


\abstract{In this paper, we propose and analyze an explicit time-stepping scheme for a spatial discretization of
stochastic Cahn--Hilliard equation with additive noise.
The fully discrete approximation combines a spectral Galerkin method in space with a tamed exponential Euler method in time.
In contrast to implicit schemes in the literature, the explicit scheme here is easily implementable
and produces significant improvement in the computational efficiency.
It is shown that the fully discrete approximation  converges strongly to the exact solution, with strong convergence rates identified.
Different from the tamed time-stepping schemes for stochastic Allen--Cahn  equations,
essential difficulties arise in the analysis due to the presence of the unbounded linear operator in front of the nonlinearity.
To overcome them, new and non-trivial arguments are developed in the present work.
To the best of our knowledge, it is the first result concerning an explicit scheme for the stochastic Cahn--Hilliard equation.
Numerical experiments are finally performed to confirm the theoretical results.}

\keywords{stochastic Cahn--Hilliard equation, strong convergence, spectral Galerkin method, tamed exponential Euler method}



\maketitle

\section{Introduction}

Let  $\mathcal{D} $ be a bounded convex domain in $\mathbb{R}^d, d \in \{1,2,3\}$.
We denote by $ H=L^2 ( \mathcal{D},\mathbb{R} ) $ a real separable Hilbert space with scalar product $ \langle \cdot,\cdot \rangle$
and norm $\|\cdot\|$ and
$\dot{H} := \{ v \in H: \int_{\mathcal{D}} v \dd x =0 \}$.
In this paper, we consider the numerical approximation of
stochastic Cahn--Hilliard equation (SCHE) in the abstract form
\begin{equation}\label{eq:CHC-abstract}
\begin{split}
\left\{
    \begin{array}{lll}
    \dd X(t) + A(A X(t)+F ( X(t) ))
    \, \dd t
    =
    \dd W(t), \quad  t \in (0, T],
    \\
     X(0) = X_0,
    \end{array}\right.
\end{split}
\end{equation}
where $0<T<\infty$, $-A$ is the Neumann Laplacian and
$\{W(t)\}_{t \geq 0}$ is a $Q$-Wiener process on a filtered probability space
$\big( \Omega, \mathcal {F}, \P, \{\mathcal {F}_t\}_{t \geq 0} \big)$, specified later.
The nonlinear term $F$ is assumed to be a Nemytskii operator, given by
$F(u)(x) = f (u(x)) = u^3(x) - u(x), x \in \mathcal{D}$.
As a phenomenological model from metallurgy and physics,
the deterministic version of such equation is used to describe the complicated phase separation and coarsening phenomena in a melted alloy
\cite{cahn1961on,cahn1971spinodal}
and spinodal decomposition for binary mixture
\cite{cahn1958free}.
Adding a noise to the physical model is quite natural as it either represents
an external random perturbation or gives a remedy for lack of knowledge of certain involved physical parameters.
For example, in \cite{blomker2001spinodal,cook1970brownian} and references therein, the authors have expressed the belief that only the stochastic version can correctly describe the whole decomposition process in
a binary alloy.
The stochastic version \eqref{eq:CHC-abstract} has been extensively studied by many authors (see e.g., \cite{antonopoulou2016existence,cui2019wellposedness,
cui2020absolute,prato1996stochastic,blomker2001spinodal, elezovic1991on,qi2019sharp,kovacs2011finite,qi2020error,furihata2018strong}).

Since the true solution of the  problem can not be known explicitly,
it is therefore natural to look for reliable numerical solutions.
To do the approximation error analysis, one often faces difficulties, raised by the presence of the unbounded operator $A$ in front of the nonlinear term $F$.
In the past few years, many authors investigated strong and weak approximations of stochastic Cahn--Hilliard equation
\cite{chai2018conforming, kossioris2013finite, larsson2011finite,furihata2018strong,cai2021weakSCHE,cui2021strongCHC,
hong2022convergence,hong2022finite,qi2020error,feng2020fully},
where some attempts to address the issue were proposed in literature.
For the linearized stochastic Cahn--Hilliard equation,
the readers are referred to \cite{chai2018conforming, kossioris2013finite, larsson2011finite}.
In \cite{hutzenthaler2020perturbation},  strong convergence rates
for the spectral Galerkin spatial approximation of the nonlinear problem in dimension one
are proved by combining a general perturbation theory
with the exponential integrability properties of the numerical approximation.
The authors in \cite{furihata2018strong,kovacs2011finite} derive the strong convergence of the finite element spatial approximation
and the backward Euler full discretization of  the SCHE driven by spatial regular noise, but with no rates obtained.
Very recently, the paper \cite{qi2020error} fills the gap left by \cite{furihata2018strong,kovacs2011finite}
and recovers the strong convergence rates of the finite element fully discrete scheme.
For space-time white noise, authors in \cite{cui2021strongCHC} obtained the strong convergence rates of a fully discrete scheme performed by a spatial spectral Galerkin method and a temporal  accelerated implicit Euler method.
Moreover, the strong convergence rates of an implicit fully discrete mixed finite element method
for the SCHE with gradient-type multiplicative noise
are derived in \cite{feng2020fully}, where the noise process is a real-valued Wiener process.
To the best of our knowledge,
the explicit methods are absent for the SCHE and this paper aims to propose an explicit  scheme for the equation
and identify its strong convergence rates.

As indicated in \cite{beccari2019strong}, the fully discrete exponential Euler and fully discrete linear-implicit Euler approximations
diverge strongly and numerically weakly in the case of stochastic Allen--Cahn equations.
Later, some explicit modified Euler-type schemes have been proposed in \cite{becker2017strong,brehier2020approximation,
cai2021weak,gyongy2016convergence,Wang2020efficient} to numerically solve the stochastic Allen--Cahn equations.
Based on a spectral Galerkin spatial  approximation of \eqref{eq:CHC-abstract}, given by
\begin{equation*}
\begin{split}
\left\{
    \begin{array}{lll}
    \dd X^N(t) + A (A X^N(t)  + P_N F( X^N(t) )) \dd t
      = P_N \dd W(t), \quad  t \in (0, T],
    \\
     X^N(0)=P_N X_0,
    \end{array}\right.
\end{split}
\end{equation*}
we propose a tamed exponential Euler scheme in time to obtain the explicit fully discrete method
\begin{equation*}
X_{t_{m+1}}^{M,N}  =  E(\tau) X_{t_m}^{M,N}
- \int_{t_m}^{t_{m+1}}
\tfrac{E(t_{m+1}-s)AP_NF(X_{t_m}^{M,N})}
{1+\tau \| P_N F (X_{t_m}^{M,N}) \|} \dd s
+ E (\tau) P_N \Delta W_m,
\end{equation*}
where $\Delta W_m = W(t_{m+1})-W(t_m),\, m \in \{0,1,2,\cdots,M-1\}$, $P_N $ is the projection operator onto
$H_N := {\rm span} \{e_1 , e_2 , \cdots , e_{N}\}$,
$E(t)=e^{-tA^2},t \ge 0$ denotes an analytic semigroup on $H$  generated by $-A^2$
 and $\tau=\tfrac TM$ stands for the time step-size.
Compared with existing implicit schemes, the proposed scheme is easy to implement and produces significant improvement in the computational efficiency.

Throughout this article, $C$ denotes a generic nonnegative constant that is independent of the discretization parameters and may change from line to line.
Meanwhile, we use $\N^{+}$ to denote the set of all positive integers and
$\N = \{ 0 \} \cup \N^{+}$.
In summary, the contribution of this article to the numerical analysis of stochastic Cahn--Hilliard equation is twofold.
On the one hand, the uniform a priori moment bounds of the full discretization are derived based on a certain bootstrap argument.
To do this, a key ingredient lies on  bounding
\[
\sup_{ M,N \in \N^{+} } \sup_{ m \in \{ 0,1,\ldots,M \} }
 \E \Big[ \| X_{t_m}^{M,N} \|_{L^6}^p \Big] <\infty
\]
by virtue of Gagliardo--Nirenberg inequality in $d=1$ and energy estimate in $d=2,3$.
On the other hand, as implied by Corollary \ref{coro:strong full discretization},
we identify the strong convergence rate of the fully discrete method:
\begin{equation}\label{eq:intro-convergence}
\sup_{ M,N \in \N^{+} } \sup_{ m \in \{0,1,\ldots,M\} }
  \| X(t_m) - X_{t_m}^{M,N} \|_{ L^p (\Omega , \dot{H}) }
    \leq C \,( \lambda_N^{-\frac{\gamma}{2}} + \tau^{ \frac{\gamma}{4} }),
    \, \gamma \in (\tfrac d2,4],
\end{equation}
where $\lambda_N$ is the  $N$-th eigenvalue of $A$ and $\gamma$ from Assumption \ref{assum:noise-term} is a parameter used to measure the spatial regularity of the noise process.
The above result reveals that the strong convergence rates are essentially governed by the spatial regularity of the noise term.
The rates of convergence are optimal.
Comparing \eqref{eq:intro-convergence} with the sharp temporal H\"{o}lder regularity result in Theorem \ref{thm:uniqueness-mild-solution},
one can easily observe, for $\gamma \in (\tfrac d2, 2]$, the rate of convergence is in accordance with the temporal H\"{o}lder regularity of the mild solution.
For $\gamma \in [2, 4]$, the rate of the convergence can reach $1$ and higher than the H\"{o}lder continuity of the mild solution due to the fact that the noise is additive.
The convergence rates are the same as that obtained for the backward Euler method from the literature \cite{qi2019sharp,qi2020error}.
It must be emphasized that the derivation of \eqref{eq:intro-convergence} is not an easy task and requires a variety of delicate error estimates, which are elaborated in subsection \ref{sec;strong convergence}.

The outline of the article is organized as follows.
In the next section, we present some assumptions
and give the well-posedness and regularity of the mild solution.
Section \ref{sec:full} is devoted to the strong convergence analysis, where spectral Galerkin method is introduced in subsection \ref{subsec:Galerkin}, uniform a priori moment bounds are deduced in subsection \ref{subsec:moment bounds} and the strong convergence rates are derived in subsection \ref{sec;strong convergence}.
Numerical examples are finally included in Section \ref{sec:numerical-experiments} to verify the theoretical findings.

\section{Main assumptions and the considered problem}\label{sec:preliminaries}

Given another separable Hilbert space
$(U, \langle \cdot, \cdot \rangle_U, \|\cdot\|_U )$,
$\mathcal{L}(U,H)$ represents the space of all bounded linear operators from $U$ to $H$ endowed with the usual operator norm
$\| \cdot \|_{\mathcal{L}(U,H)}$
and by $\mathcal{L}_2(U,H) \subset \mathcal{L}(U,H)$
we denote the space consisting of all Hilbert--Schmidt operators from $U$ to $H$.
To simplify the notation, we often write $\mathcal{L}(H) $ and $\mathcal{L}_2(H)$ (or $\mathcal{L}_2$ for short) instead of $\mathcal{L}(H,H)$ and $\mathcal{L}_2(H,H)$, respectively.
It is easy to prove that $\mathcal{L}_2 (U,H)$ is a Hilbert space equipped with the inner product and norm,
\begin{align*}
\langle T_1 , T_2 \rangle_{\mathcal{L}_2(U,H)}
=
\sum_{i\in\N^{+}} \langle T_1 \phi_i , T_2 \phi_i \rangle,
\;
\| T \|_{\mathcal{L}_2(U,H)}
=\Big(
\sum_{i \in \N^{+}} \| T \phi_i \|^2
\Big)^{\frac12},
\end{align*}
independent of the choice of orthonormal basis $\{\phi_i\}$ of $U$.
If $ T \in \mathcal{L}_2(U,H)$ and $L\in \mathcal{L}(H,U)$,
then $ T L\in \mathcal{L}_2(H)$ and
$ \| T L \|_{\mathcal{L}_2(H)} \leq
       \| T \|_{\mathcal{L}_2(U,H)}  \|L\|_{\mathcal{L}(H,U)}. $
Also,  $  \vert \langle T_1, T_2  \rangle_{\mathcal{L}_2(U,H)} \vert \leq \| T_1 \|_{\mathcal{L}_2(U,H)} \| T_2  \|_{\mathcal{L}_2(U,H)} $ holds for $T_1, T_2 \in \mathcal{L}_2(U,H)$.
Finally,  $V:=C(\mathcal{D},\mathbb{R})$ represents the Banach space of all continuous functions from $\mathcal{D}$ to $\mathbb{R}$ with supremum norm.
Throughout this paper, we define  an orthogonal projector $P:H \rightarrow \dot{H}$ by
\begin{equation*}
P v= v - \vert \mathcal D \vert^{-1} \int_{\mathcal D} v \dd x
\end{equation*}
and then
$(I-P)v= \vert \mathcal D \vert^{-1} \int_{\mathcal D} v \dd x$
is the average of $v$.
Here and below, by
$ L^r(\mathcal{D} , \mathbb{R}), r \geq 1 $
($L^r(\mathcal{D})$  or $L^r$ for short)
we denote a Banach space consisting of all $r$-times integrable functions.

In the sequel, the main assumptions are made for the abstract model \eqref{eq:CHC-abstract}.
\begin{assumption}\label{assum:linear-operator-A}
Let $\mathcal{D} $ be a bounded convex domain of $\mathbb{R}^d, d \in \{1,2,3\}$ with Lipschitz boundary.
Let $-A$ be the Neumann Laplacian, given by $-Au = \Delta u$,
with $ u \in \mathrm{dom}(A) := \{ v\in H^2(\mathcal D) \cap \dot{H}:
   \tfrac{\partial v}{\partial n}=0  \,\, on \,\, \partial \mathcal D\} $.
\end{assumption}

For $ v \in H$, we extend the definition as $A v=A P v$.
Then there exists a family of orthonormal eigenbasis
$\{e_j\}_{j \in \N }$
with corresponding eigenvalues
$\{\lambda_j\}_{j \in \N}$
such that
\begin{align*}
A e_j=\lambda_j e_j,
\,\,
0= \lambda_0 < \lambda_1 \leq \lambda_2 \leq \cdots
  \leq \lambda_j \leq \cdots, \,\,
\lambda_j \rightarrow \infty \,\, \text{as} \,\, j \to \infty,
\end{align*}
where
$e_0= \vert \mathcal D \vert^{-\frac12}$
and
$\{ e_j \}_{ j \in \N^{+} }$
forms an orthonormal basis of $\dot{H}$.
We define the fractional powers of $A$ on $\dot{H}$ by the spectral theory,
e.g.,
$A^\alpha v = \sum_{j=1}^\infty \lambda_j^\alpha \langle v,e_j \rangle e_j$,
$\alpha \in \mathbb{R}$.
The space $\dot{H}^\alpha := \mathrm{dom} ( A^{\frac \alpha 2 } )$ is a Hilbert space  with the inner product $\langle \cdot, \cdot \rangle_{\alpha}$
and the associated norm $\vert \cdot \vert_\alpha $ defined by
\begin{equation*}
\langle v , w \rangle_{\alpha}
   =  \sum_{j=1}^{\infty} \lambda_j^{\alpha}
      \langle v , e_j \rangle \langle w , e_j \rangle,
\,\, \vert v \vert_{\alpha}=  \| A^{\frac\alpha 2} v \|=
\Big( \sum_{j=1}^{\infty} \lambda_j^{\alpha}
        \vert \langle v,e_j\rangle \vert^2   \Big)^{\frac12},
        \, \alpha \in \mathbb{R}.
\end{equation*}
Note that for integer $k \geq 0$,
$\dot{H}^k$
is a subspace of
$H^k (D) \cap \dot{H}$
characterized by certain boundary conditions.
Let us recall the following results concerning the spaces $\dot{H}^{\alpha}$ and the associated norms $\vert \cdot \vert_\alpha $  for $\alpha \in [0,2]$, see
 \cite{furihata2018strong,kim2020fractional,yagi2010abstract} for more details.
For $\alpha \in [0,\tfrac32)$, one has
$$\dot{H}^{\alpha}=H^\alpha(\mathcal D),$$
and for  $\alpha \in (\tfrac32,2]$, one has
$$\dot{H}^{\alpha}=H_N^\alpha(\mathcal D)
:=\{v \in H^\alpha(\mathcal D): \tfrac{\partial v}{\partial n}=0 \,\, \textrm{on} \,\, \partial \mathcal D \} \subset H^\alpha(\mathcal D).$$
Additionally, for $\alpha \in [0,\tfrac32)\cup(\tfrac32,2]$,
the norm
$\vert \cdot \vert_\alpha$
is  equivalent on $\dot{H}^\alpha$ to the standard Sobolev norm
$\| \cdot \|_{H^\alpha(\mathcal D)}$.
Since $H^2(\mathcal D)$ is an algebra,
one can deduce that for any $f,g \in \dot{H}^2$,
 \begin{equation}\label{eq:norm-algebra}
 \|fg\|_{H^2(\mathcal D)} \leq
      C \|f\|_{H^2(\mathcal D)} \|g\|_{H^2(\mathcal D)}
     \leq  C \vert f \vert_2 \vert g \vert_2.
 \end{equation}
Additionally, the operator $-A^2$ generates an analytic semigroup $E(t)=e^{-tA^2}$ on $H$, given by
\begin{align*}
\begin{split}
E(t)v=e^{-tA^2}v
&=
\sum_{j=0}^\infty e^{-t \lambda_j^2}\big\langle v,e_j\big\rangle e_j
=
\sum_{j=1}^\infty e^{-t \lambda_j^2}\big\langle v,e_j\big\rangle e_j
+
\big\langle v,e_0\big\rangle e_0
\\
&=Pe^{-t A^2}v + (I-P)v, \quad v \in H.
\end{split}
\end{align*}
At last, the properties of $E(t)$ are obtained by expansion in terms of the eigenbasis of $A$ and using Parseval's identity,
\begin{align}
&\| A^\mu E(t) \|_{\mathcal{L}(\dot{H})}
\leq
 C t^{-\frac\mu2}, \, t>0, \, \mu \geq 0,
 \label{I-spatial-temporal-S(t)}
 \\
 & \| A^{-\nu}(I-E(t)) \|_{\mathcal{L}(\dot{H})}
 \leq
 C t^{\frac\nu2}, \, t \geq 0, \, \nu \in[0,2],
 \label{II-spatial-temporal-S(t)}
 \\   &
\int_{t_1}^{t_2} \! \! \| A^\varrho E(s) v \|^2  \dd s
 \leq C \vert t_2-t_1 \vert^{1-\varrho} \| v \|^2, \, \forall v \in \dot{H},
\varrho \in [0,1], 0 \leq t_1 \leq t_2,
\label{III-spatial-temporal-S(t)}
\\ &
 \Big\| A^{2\rho} \! \! \! \int_{t_1}^{t_2} \! \! \!
         E(t_2-\sigma)v  \dd \sigma
 \Big\|
 \leq
 C \vert t_2 \! - \! t_1 \vert^{1-\rho} \| v \|, \, \forall v \in \dot{H},
 \rho \in [0,1], 0 \leq t_1 \leq t_2.
 \label{IV-spatial-temporal-S(t)}
 \end{align}

\begin{assumption}\label{assum:nonlinearity}
Let
$F : L^6 ( \mathcal{D} , \mathbb{R} ) \rightarrow H$
be the Nemytskii operator given by
\begin{align*}
F (v)(x)
=f ( v(x))
=v^3(x)-v(x),
\quad
x \in \mathcal{D}, v \in L^6(\mathcal{D} , \mathbb{R}).
\end{align*}
\end{assumption}
Then, for $v, \zeta,  \zeta_1, \zeta_2 \in L^6(\mathcal{D},\mathbb{R})$,
we have
\begin{equation*}
\begin{split}
\big ( F'(v) (\zeta) \big) (x)
& =
f'(v(x)) \zeta ( x )
=
(
3 v^2 ( x )-1
)
\zeta ( x ),
\quad
x\in \mathcal{D},
 \\
 \big( F''(v) ( \zeta_1, \zeta_2 ) \big ) (x)
 & =
 f''(v(x))  \zeta_1 ( x ) \zeta_2 ( x )
 =
 6 v(x) \zeta_1 ( x ) \zeta_2 ( x ),
 \quad
 x\in \mathcal{D},
\end{split}
\end{equation*}
where the above derivatives can be understood as Gateaux derivatives in Banach spaces.
As a result, there exists a constant $C > 0$ such that
\begin{equation}\label{eq:one-side-condition}
- \langle F(u)-F(v) , u-v \rangle
   \leq  \| u-v \|^2, \quad u, v \in L^6(\mathcal{D}).
\end{equation}
\begin{equation}\label{eq:F'-condition}
\|F'(v)u\| \leq C \big( 1+ \| v \|_V^2 \big)\|u\|,
\quad u,v \in V.
\end{equation}
\begin{equation}\label{eq:local-condition}
\| F (u) - F (v) \|  \leq
C ( 1 + \| u \|_V^2 +  \| v \|_V^2 ) \| u - v \|,
\quad u, v \in V.
\end{equation}
To simplify the presentation, we assume the average of the Wiener process to be zero
so that the covariance operator $Q$ of the $Q$-Wiener process  belongs to $ \mathcal{L}(\dot{H}) $.
\begin{assumption}\label{assum:noise-term}
Let $\{ W(t)\}_{t \in [0,T]}$ be a  $\dot{H}$-valued (possibly cylindrical) Q-Wiener process
with the covariance operator $Q \in \mathcal{L}(\dot{H})$ satisfying
\begin{align}\label{eq:ass-AQ-condition}
\Big\| A^{\frac{\gamma-2}2} Q^{\frac12} \Big\|_{\mathcal{L}_2}
  < \infty \quad \text{for some} \quad
   \gamma \in \big( \tfrac{d}{2} , 4 \big].
\end{align}
\end{assumption}

\begin{assumption}\label{assum:intial-value-data}
Let $X_0: \Omega \rightarrow \dot{H}$ be
$\mathcal{F}_0 / \mathcal{B}(\dot{H})$-measurable
and satisfy that for a sufficiently large number $ p_0 \in \mathbb{N} $,
\begin{align*}
\E [ \vert X_0 \vert_\gamma^{p_0} ] <\infty,
\end{align*}
where $\gamma$ is the parameter from \eqref{eq:ass-AQ-condition}.
\end{assumption}

Before moving on, similar to \cite[Equations (2.5), (2.7)]{cui2021strongCHC}, we give the following lemma concerning the spatio-temporal regularity results of stochastic convolution
\begin{equation*}
\mathcal{O}_t := \int_0^t  E(t-s)  \dd W(s).
\end{equation*}

\begin{lemma}\label{lem:stochastic-convolution}
Suppose Assumptions \ref{assum:linear-operator-A} and \ref{assum:noise-term} hold.
Then for any $p \ge 1$,
the stochastic convolution $\mathcal{O}_t$ satisfies
\begin{equation*}
\E \Big[ \sup_{t\in[0,T]} \| \mathcal{O}_t \|_{V}^p \Big]
 + \sup_{t\in[0,T]} \E \Big[ \vert \mathcal{O}_t \vert_{\gamma}^p \Big] < \infty,
\end{equation*}
and for $\alpha \in [0,\gamma]$ and $0 \leq s \leq  t \leq T$,
\begin{equation*}
\| \mathcal{O}_t - \mathcal{O}_s \|_{L^p( \Omega , \dot{H}^{\alpha} )}
  \leq C ( t-s )^{\mathrm{min} \{  \frac12,\frac{\gamma-\alpha}{4}  \}}.
\end{equation*}
\end{lemma}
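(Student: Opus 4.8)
The plan is to exploit that $\mathcal{O}_t$ is a centred Gaussian random variable in every space $\dot H^\alpha$, so by the Kahane--Khintchine equivalence its $p$-th moment is controlled by its second moment; hence for the two pointwise-in-time bounds it suffices to estimate $\E[\vert\mathcal O_t\vert_\gamma^2]$ and $\E[\vert\mathcal O_t-\mathcal O_s\vert_\alpha^2]$ and then raise the resulting rate to the power $p$. Throughout I abbreviate $K:=\| A^{(\gamma-2)/2}Q^{1/2}\|_{\mathcal L_2}$, finite by \eqref{eq:ass-AQ-condition}, and I repeatedly use the factorization $A^{\sigma/2}E(r)Q^{1/2}=\big(A^{(\sigma-\gamma+2)/2}E(r)\big)\,A^{(\gamma-2)/2}Q^{1/2}$, which moves all of the noise regularity into the Hilbert--Schmidt factor of norm $K$ and leaves a pure semigroup factor to be estimated by \eqref{I-spatial-temporal-S(t)}--\eqref{III-spatial-temporal-S(t)}.

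For the spatial bound, It\^o's isometry gives $\E[\vert\mathcal O_t\vert_\gamma^2]=\int_0^t\|A^{\gamma/2}E(t-r)Q^{1/2}\|_{\mathcal L_2}^2\,\dd r$. Writing $A^{\gamma/2}E(t-r)=A\,E(t-r)\,A^{(\gamma-2)/2}$, expanding in an orthonormal basis $\{\phi_i\}$ with $\psi_i:=A^{(\gamma-2)/2}Q^{1/2}\phi_i$, and substituting $u=t-r$ turns the integral into $\sum_i\int_0^t\|A\,E(u)\psi_i\|^2\,\dd u$, which \eqref{III-spatial-temporal-S(t)} with $\varrho=1$ bounds by $C\sum_i\|\psi_i\|^2=CK^2$, uniformly in $t$. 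For the temporal bound I decompose $\mathcal O_t-\mathcal O_s=\int_s^t E(t-r)\,\dd W(r)+\int_0^s\big(E(t-s)-I\big)E(s-r)\,\dd W(r)$ and treat the two stochastic integrals separately. The first reduces, via the same factorization, to $\sum_i\int_0^{t-s}\|A^{\varrho}E(u)\psi_i\|^2\,\dd u$ with $\varrho=1-\tfrac{\gamma-\alpha}2$: when $\gamma-\alpha\le2$ one has $\varrho\in[0,1]$ and \eqref{III-spatial-temporal-S(t)} yields the factor $(t-s)^{(\gamma-\alpha)/2}$, while when $\gamma-\alpha>2$ the operator $A^{\varrho}$ is bounded and \eqref{III-spatial-temporal-S(t)} with exponent $0$ gives $(t-s)^1$. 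For the second integral I peel off the semigroup increment by \eqref{II-spatial-temporal-S(t)}, writing $A^{-\nu}(I-E(t-s))$ with $\nu=\min\{1,\tfrac{\gamma-\alpha}2\}\in[0,2]$ to gain the factor $(t-s)^{\nu}$, and control the remaining $A^{\nu+1-(\gamma-\alpha)/2}E(s-r)$ by \eqref{III-spatial-temporal-S(t)} after checking that this exponent lies in $[0,1]$ for every $\alpha\in[0,\gamma]$, $\gamma\in(\tfrac d2,4]$. Combining the two pieces gives $\E[\vert\mathcal O_t-\mathcal O_s\vert_\alpha^2]\le C(t-s)^{\min\{1,(\gamma-\alpha)/2\}}$, and the claimed rate follows after taking square roots and invoking Gaussianity.

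The genuinely harder bound is $\E[\sup_{t}\|\mathcal O_t\|_V^p]$, for which the plan is the factorization method: for $\delta\in(0,1)$ one writes $\mathcal O_t=\tfrac{\sin\pi\delta}{\pi}\int_0^t(t-\sigma)^{\delta-1}E(t-\sigma)Y_\sigma\,\dd\sigma$ with $Y_\sigma=\int_0^\sigma(\sigma-r)^{-\delta}E(\sigma-r)\,\dd W(r)$. Using the Sobolev embedding $\dot H^{\beta}\hookrightarrow V$ valid for $\beta>\tfrac d2$, then \eqref{I-spatial-temporal-S(t)} in the form $\|A^{\beta/2}E(t-\sigma)Y_\sigma\|\le C(t-\sigma)^{-(\beta-\gamma')/2}\vert Y_\sigma\vert_{\gamma'}$, and finally H\"older's inequality in $\sigma$, I reduce the supremum to $\int_0^T\E[\vert Y_\sigma\vert_{\gamma'}^p]\,\dd\sigma$. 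A second application of It\^o's isometry, the factorization, and Gaussianity bounds $\sup_\sigma\E[\vert Y_\sigma\vert_{\gamma'}^p]$ by a constant provided $2\delta+\tfrac{\gamma'-\gamma+2}2<1$. The result is first proved for $p$ large (so that the H\"older conjugate exponent is close to $1$) and then extended to all $p\ge1$ by Jensen's inequality.

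I expect the parameter bookkeeping in this last step to be the main obstacle. Three requirements pull $\delta,\beta,\gamma'$ in opposite directions: $\delta<1$ for the factorization, $\tfrac d2<\beta<\gamma'+2\delta$ so that both the embedding holds and the reconstruction integral converges, and $\gamma'<\gamma-4\delta$ so that $Y$ has uniformly bounded moments. One must verify that these windows are simultaneously nonempty, which is where the standing hypothesis $\gamma>\tfrac d2$ is essential: it forces $\delta<\tfrac12(\gamma-\tfrac d2)$, and for such $\delta$ an admissible pair $(\gamma',\beta)$ can be selected. By contrast, the two pointwise bounds are routine once the factorization of $A^{\sigma/2}E(r)Q^{1/2}$ and the semigroup estimates \eqref{I-spatial-temporal-S(t)}--\eqref{III-spatial-temporal-S(t)} are in place.
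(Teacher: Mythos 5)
Your proposal is correct and follows essentially the same route as the paper: the factorization method combined with Sobolev embedding and H\"older's inequality for the uniform bound in $V$, the operator splitting $A^{\sigma/2}E(r)Q^{1/2}=\big(A^{(\sigma-\gamma+2)/2}E(r)\big)A^{(\gamma-2)/2}Q^{1/2}$ together with \eqref{I-spatial-temporal-S(t)}--\eqref{III-spatial-temporal-S(t)} for the spatial bound, and the identical two-term decomposition of $\mathcal{O}_t-\mathcal{O}_s$ for the temporal increment. The only cosmetic differences are that you pass from second to $p$-th moments via Gaussian moment equivalence where the paper invokes Burkholder--Davis--Gundy--type inequalities, and that your reconstruction exponent $(t-\sigma)^{-(\beta-\gamma')/2}$ is coarser than the sharp $(t-\sigma)^{-(\beta-\gamma')/4}$ given by \eqref{I-spatial-temporal-S(t)}, which merely shrinks (but does not empty) your admissible parameter window.
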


\begin{proof}
Applying the  factorization method used in
\cite[Theorem 5.10]{prato2014stochastic} yields that,
for $\alpha \in (0,1)$,
\begin{equation*}
\mathcal{O}_t = \frac{\sin(\alpha \pi)}{\pi}
  \int_0^t ( t - s )^{\alpha - 1} E( t - s ) Y_{\alpha} (s) \dd s,
\end{equation*}
where
\begin{equation*}
 Y_{\alpha} (s) := \int_0^s ( s - r )^{-\alpha}
                   E (s -  r )  \dd W(r).
\end{equation*}
Then, by the Burkholder--Davis--Gundy inequality and H\"older's inequality,  for a sufficiently large $p>1$ and
$\tfrac d2 < \theta < \min \{ \gamma, 2\}$ such that
$\tfrac 1p + \tfrac \theta4 < \alpha < \min \{ \tfrac \gamma 4, \tfrac 12 \}$,
we obtain
\begin{equation*}
\begin{split}
\E \Big[  \sup_{t\in[0,T]} \vert \mathcal{O}_t \vert_{\theta}^p \Big]
 &= \E \Big[ \sup_{t\in[0,T]} \Big\vert \frac{\sin(\alpha \pi)}{\pi}
  \int_0^t ( t - s )^{\alpha - 1} E( t - s ) Y_{\alpha} (s) \dd s \Big\vert_{\theta}^p \Big]
  \\ & \leq C \, \E \Big[ \sup_{t\in[0,T]} \Big(
  \int_0^t ( t - s )^{\alpha - 1 -\frac \theta 4} \vert Y_{\alpha} (s) \vert \dd s \Big)^p \Big]
  \\ & \leq C \, \E \Big[ \sup_{t\in[0,T]} \Big(
  \int_0^t ( t - s )^{(\alpha - 1 -\frac \theta 4)q} \dd s \Big)^\frac pq
  \cdot \int_0^t \vert Y_{\alpha} (s) \vert^p \dd s  \Big]
  \\ & \leq C \int_0^T \E \big[ \vert Y_{\alpha} (s) \vert^p \big] \dd s
   \\ & \leq C \int_0^T \Big( \int_0^s ( s - r )^{ - 2 \alpha }
        \| E ( s- r) Q^{\frac 12}
            \|_{\mathcal{L}_2}^2 \dd r \Big)^{\frac p2} \dd s
   \\ & \leq C \int_0^T \Big( \int_0^s
          ( s - r )^{ - 2 \alpha + \min \{ \frac {\gamma-2}2,0\} }
        \| A^{\frac {\gamma-2}2} Q^{\frac 12}
            \|_{\mathcal{L}_2}^2 \dd r \Big)^{\frac p2} \dd s
   \\ & < \infty.
\end{split}
\end{equation*}
With aid of the Sobolev embedding inequality
$\dot{H}^{\theta} \subset V,\theta > \tfrac d2$,
we arrive at
\begin{equation*}
\E \Big[ \sup_{t\in[0,T]} \| \mathcal{O}_t \|_{V}^p \Big] < \infty.
\end{equation*}
By means of the Burkholder--Davis--Gundy--type inequality, \eqref{III-spatial-temporal-S(t)} and \eqref{eq:ass-AQ-condition},
we have
\begin{equation*}
\begin{split}
\| \mathcal{O}_t \|_{L^p(\Omega,\dot{H}^{\gamma})}
   & \leq  C \Big(\int_0^t \big\| A^{\frac \gamma 2}
  E (t-s) \big\|_{\mathcal{L}_2^0}^2  \dd s  \Big)^{\frac 12}
   \\ & = C \Big(\int_0^t \big\| A E (t-s)
    A^{\frac {\gamma-2} 2} \big\|_{\mathcal{L}_2^0}^2  \dd s
   \Big)^{\frac 12}
   \\ & \leq C \| A^{\frac {\gamma-2} 2}
               Q^{\frac 12} \|_{\mathcal{L}_2} < \infty.
\end{split}
\end{equation*}
Similarly, for $\alpha \in [0,\gamma]$,
\begin{equation*}
\begin{split}
\| & \mathcal{O}_t - \mathcal{O}_s \|_{L^p(\Omega,\dot{H}^{\alpha})}
 \\ &   \leq  C \Big( \int_0^s \big\| A^{\frac \alpha 2}
 ( E (t-r) - E (s-r) ) \big\|_{\mathcal{L}_2^0}^2
 \dd r  \Big)^{\frac 12}
  + C \Big( \int_s^t \big\| A^{\frac \alpha 2}
  E (t-r)  \big\|_{\mathcal{L}_2^0}^2 \dd r  \Big)^{\frac 12}
   \\ & = C \Big( \int_0^s \big\| A  E (s-r)
 A^{\frac {\alpha - \gamma} 2}
 ( E (t-s) -I ) A^{\frac  {\gamma-2} 2} \big\|_{\mathcal{L}_2^0}^2
 \dd r  \Big)^{\frac 12}
 \\ & \quad + C \Big( \int_s^t \big\| A^{\frac { 2 - \gamma+ \alpha} 2}
  E (t-r) A^{\frac  {\gamma-2} 2} \big\|_{\mathcal{L}_2^0}^2
  \dd r  \Big)^{\frac 12}
 \\ & \leq C ( t - s )^{\frac{\gamma-\alpha}{4}}
 + C ( t-s )^{\mathrm{min} \{  \frac12,\frac{\gamma-\alpha}{4}  \}}
 \\ & \leq C ( t-s )^{\mathrm{min}
       \{  \frac12,\frac{\gamma-\alpha}{4}  \}}.
\end{split}
\end{equation*}
Hence, we complete the proof.
\end{proof}

At last, we consider the mild solution of \eqref{eq:CHC-abstract} by following a semigroup approach proposed in \cite{prato2014stochastic}.
As already proved in
\cite[Proposition 6 $\&$ Proposition 7]{cui2021strongCHC},
the above assumptions are sufficient to establish well-posedness of the model \eqref{eq:CHC-abstract} and spatio-temporal regularity of the mild solution for $\gamma \in (\tfrac 12,4],d=1$ and $\gamma \in [3,4],d \in \{2,3\}$.
Later, we have extended the results to the case $\gamma \in (\tfrac d2,4],d \in \{1,2,3\}$, which are shown in
\cite[Theorem 3.6]{qi2019sharp}.
The relevant results are presented  in  the following theorem.

\begin{theorem}[Well-posedness and regularity results]
\label{thm:uniqueness-mild-solution}
Under Assumptions \ref{assum:linear-operator-A}--\ref{assum:intial-value-data}, there is a unique mild solution $X: [0,T] \times \Omega \to \dot{H}$ to \eqref{eq:CHC-abstract} given by
\begin{align*}
X(t) = E(t)X_0  - \int_0^t E(t-s) A P F(X(s)) \, \mathrm{d} s
    + \int_0^t E(t-s) \, \mathrm{d} W(s), \, t \in [0, T].
\end{align*}
Furthermore, for $\gamma \in (\tfrac d2,4]$ and $p \geq 1$,
\begin{align*}
\sup_{t\in[0,T]} \|X(t)\|_{L^{p} ( \Omega, \dot{H}^{\gamma} ) } < \infty,
\end{align*}
and for $\alpha \in [0,\gamma]$,
\begin{align*}
\| X(t) - X(s) \|_{L^{p} ( \Omega, \dot{H}^{\alpha} ) }  \leq
 C (t-s)^{\mathrm{min}\{\frac12,\frac{\gamma-\alpha}4\}}, \,  \quad
 0 \leq s \leq t \leq T.
\end{align*}
\end{theorem}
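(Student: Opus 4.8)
The plan is to establish the three assertions of Theorem~\ref{thm:uniqueness-mild-solution} in the standard order for semilinear stochastic evolution equations: first existence and uniqueness of the mild solution via a fixed-point argument, then the spatial regularity bound in $L^p(\Omega, \dot{H}^\gamma)$, and finally the temporal H\"older estimate. Since the excerpt explicitly points to \cite[Proposition 6 \& Proposition 7]{cui2021strongCHC} and \cite[Theorem 3.6]{qi2019sharp} as the sources, the proof will largely cite these works and sketch the essential mechanism. For well-posedness, I would set up a Banach fixed-point iteration on the integral (mild) form. The map $\Phi(Y)(t) = E(t)X_0 - \int_0^t E(t-s) A P F(Y(s))\,\dd s + \mathcal{O}_t$ must be shown to be a contraction on a suitable space of adapted processes with finite $L^p(\Omega, V)$-norm (or $L^p(\Omega, \dot{H}^{\theta})$ with $\theta > d/2$ so that the Sobolev embedding $\dot{H}^\theta \subset V$ controls the cubic nonlinearity). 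Here the regularity of the stochastic convolution from Lemma~\ref{lem:stochastic-convolution} provides the base integrability, and the local Lipschitz bound \eqref{eq:local-condition} together with the smoothing estimate \eqref{I-spatial-temporal-S(t)} for $\|A E(t)\|_{\mathcal{L}(\dot{H})} \le C t^{-1}$ controls the deterministic convolution term.

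For the spatial regularity $\sup_{t} \|X(t)\|_{L^p(\Omega, \dot{H}^\gamma)} < \infty$, I would bound each of the three terms in the mild formula separately in the $|\cdot|_\gamma$ norm. The semigroup term $E(t)X_0$ is handled by the analyticity of $E$ and Assumption~\ref{assum:intial-value-data}, which supplies $\E[|X_0|_\gamma^{p}] < \infty$. The stochastic convolution is controlled directly by the already-proven bound $\sup_t \|\mathcal{O}_t\|_{L^p(\Omega, \dot{H}^\gamma)} < \infty$ from Lemma~\ref{lem:stochastic-convolution}. The deterministic convolution $\int_0^t E(t-s) A P F(X(s))\,\dd s$ is the delicate term: applying $A^{\gamma/2}$ and splitting the operator as $A^{\gamma/2} E(t-s) A = A E(t-s) A^{\gamma/2}$ (using that the algebra property \eqref{eq:norm-algebra} gives $|F(X(s))|_2 \le C(1 + |X(s)|_2^3)$ in the relevant range), one estimates $\|A E(t-s)\|_{\mathcal{L}(\dot H)} \le C(t-s)^{-1}$ against the integrable singularity, provided $\gamma \le 4$ keeps the exponent admissible. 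This requires a preliminary a~priori moment bound on $\sup_t \E[|X(t)|_2^p]$, which itself follows from an energy/bootstrap argument or is imported from the cited references.

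For the temporal H\"older continuity, I would write $X(t) - X(s)$ as the sum of three increments and estimate each in $L^p(\Omega, \dot{H}^\alpha)$. The semigroup increment $(E(t)-E(s))X_0 = E(s)(E(t-s)-I)X_0$ uses \eqref{II-spatial-temporal-S(t)}, contributing a factor $(t-s)^{(\gamma-\alpha)/4}$ by trading regularity of $X_0$ against the $A^{-\nu}(I-E)$ bound. The stochastic convolution increment is exactly the H\"older estimate already established in Lemma~\ref{lem:stochastic-convolution}, giving the $(t-s)^{\min\{1/2,(\gamma-\alpha)/4\}}$ rate, which is the term responsible for the $\min$ in the final exponent. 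The deterministic convolution increment splits into an over-$[s,t]$ piece, bounded using \eqref{IV-spatial-temporal-S(t)} and the moment bound on $F(X)$, and a difference piece on $[0,s]$ treated as above via $E(t-r)-E(s-r) = E(s-r)(E(t-s)-I)$.

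The main obstacle will be the deterministic convolution term in both the spatial and temporal estimates, because of the unbounded operator $A$ standing in front of the nonlinearity $F$ (this is precisely the difficulty flagged in the introduction). Controlling $\int_0^t A^{1+\alpha/2} E(t-s) P F(X(s))\,\dd s$ requires the singular factor $(t-s)^{-1-\alpha/2+(\gamma)/2}$ to remain integrable and demands uniform-in-time moment bounds on $F(X(s))$ in a sufficiently strong norm; securing those bounds (via the algebra inequality \eqref{eq:norm-algebra} and an a~priori estimate on $\sup_t\E[|X(t)|_2^p]$) is the technical heart of the argument and the reason the cited references invest substantial effort there.
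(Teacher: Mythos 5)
The paper itself gives no proof of Theorem \ref{thm:uniqueness-mild-solution}: the result is imported from \cite[Propositions 6--7]{cui2021strongCHC} and \cite[Theorem 3.6]{qi2019sharp}, so your decision to cite those works and sketch the mechanism is consistent with what the paper does. However, read as a proof, your sketch contains a step that fails. You twice quote the smoothing estimate \eqref{I-spatial-temporal-S(t)} as $\| A E(t)\|_{\mathcal{L}(\dot{H})} \leq C t^{-1}$ and then describe the resulting singularity as integrable. Since $E(t)=e^{-tA^2}$ is generated by the \emph{fourth-order} operator $-A^2$, the estimate actually reads $\| A^{\mu} E(t)\|_{\mathcal{L}(\dot{H})} \leq C t^{-\mu/2}$, so one power of $A$ costs only $t^{-1/2}$; with the exponent you wrote, the deterministic convolution
$\int_0^t \| A E(t-s)\|_{\mathcal{L}(\dot{H})} \, \| P F(Y(s))\| \, \dd s$
diverges and your fixed-point map is not even well defined. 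The same second-order (heat-type) scaling reappears in your closing paragraph, where the exponent $-1-\tfrac{\alpha}{2}+\tfrac{\gamma}{2}$ should be $-\tfrac12-\tfrac{\alpha}{4}+\tfrac{\gamma}{4}$. This is not cosmetic: the integrability of precisely these singularities is what the fourth-order smoothing buys, and it is the reason the whole argument closes.

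The second genuine gap is in your spatial-regularity estimate. Your splitting $A^{\gamma/2}E(t-s)A = A E(t-s) A^{\gamma/2}$ places $A^{\gamma/2}$ on the nonlinearity, i.e.\ it requires a bound on $\vert F(X(s))\vert_{\gamma}$. The algebra inequality \eqref{eq:norm-algebra} only controls $\vert F(X)\vert_{2}$, so for $\gamma \in (2,4]$ your estimate either invokes a bound you do not have, or, if you replace it by an $H^{\gamma}$-algebra inequality $\vert F(X)\vert_{\gamma} \leq C(1+\vert X\vert_{\gamma}^3)$, becomes cubic in the very quantity $\sup_{t}\E[\vert X(t)\vert_{\gamma}^p]$ you are trying to bound, and Gronwall no longer applies. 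The non-circular splitting --- the one the paper uses in the discrete analogue, Corollary \ref{X_t;gamma bound} --- is the opposite one: keep the high power on the semigroup and the low power on $F$. Concretely, one first bounds low norms of $X$ using $\| A^{(\gamma+2)/2} E(t-s)\|_{\mathcal{L}(\dot{H})}\,\| P F(X(s))\| \leq C (t-s)^{-(\gamma+2)/4}\| P F(X(s))\|$, then bootstraps through $\| A^{(\gamma+1)/2} E(t-s)\|\,\vert PF(X(s))\vert_{1}$ and $\| A^{\gamma/2} E(t-s)\|\,\vert PF(X(s))\vert_{2} \leq C(t-s)^{-\gamma/4}(1+\vert X(s)\vert_2^3)$ for $\gamma \in [3,4)$, where $\vert X\vert_{2}$ was already bounded at the previous stage, and finally handles $\gamma = 4$ by a separate Sobolev-embedding argument, since there the exponent $(t-s)^{-1}$ genuinely is non-integrable. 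The remainder of your outline --- fixed point in a space of adapted $V$-valued processes, and the three-term increment decomposition for the temporal H\"older bound with Lemma \ref{lem:stochastic-convolution} supplying the $\min\{\tfrac12,\tfrac{\gamma-\alpha}{4}\}$ exponent and \eqref{II-spatial-temporal-S(t)}, \eqref{IV-spatial-temporal-S(t)} handling the deterministic pieces --- is sound and matches what the cited references do.
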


\section{Strong convergence analysis of  numerical approximation}
\label{sec:full}
This section aims to derive strong convergence rates of the numerical discretization, done by a tamed exponential Euler method based on the spectral Galerkin approximation.
\subsection{The spectral Galerkin spatial discretization}
\label{subsec:Galerkin}
We start this part by introducing a finite dimension space spanned by the first $N$ eigenvectors of the dominant linear operator $A$, i.e.,
  $ H_N= {\rm span} \{e_1,\cdots,e_N\} $
and the projection operator $P_N:\dot{H}^{\beta} \to H_N$ is defined by
$P_N x = \sum_{i=1}^N \langle x , e_i \rangle e_i \,\,
\text{for} \,\, \forall x \in \dot{H}^{\beta},~\beta \geq -2$.
Given the identity mapping $I \in \mathcal{L}(\dot{H})$,
one can easily obtain that
 \begin{equation*}
 \big\| \big( P_N - I \big) A^{-\alpha} \big\|_{\mathcal{L}(\dot{H})}
     \leq C \lambda_{N}^{-\alpha}, \quad \forall ~ \alpha \geq 0.
 \end{equation*}
Then applying the spectral Galerkin method to \eqref{eq:CHC-abstract} results in the finite  dimensional stochastic differential equation, given by
\begin{equation}\label{eq:spatial-discrete}
\begin{split}
\left\{
    \begin{array}{lll}
    \dd X^N(t) + A (A X^N(t)  + P_N F( X^N(t) )) \dd t
      = P_N \dd W(t), \quad  t \in (0, T],
    \\
     X^N(0)=P_N X_0,
    \end{array}\right.
\end{split}
\end{equation}
whose unique mild solution is adapted and satisfies
\begin{equation}\label{eq:space-mild}
\begin{split}
X^N(t) & = E(t) P_N X_0
-\int_0^t E(t-s)A P_N F(X^N(s)) \dd s
\\ & \quad + \int_0^t E(t-s) P_N \dd W(s).
\end{split}
\end{equation}
The following theorem, concerning the strong convergence rate of the spectral Galerkin method, is an immediate consequence of
\cite[Theorem 3.5]{qi2019sharp}.
\begin{theorem}\label{th:space-rate}
Let $X(t)$ be the mild solution of \eqref{eq:CHC-abstract} and
let $X^N(t)$ be the solution of \eqref{eq:spatial-discrete}.
Suppose Assumptions \ref{assum:linear-operator-A}--\ref{assum:intial-value-data}
are valid, then for any $p \in [1,\infty)$, it holds that
\begin{equation*}
\sup_{t \in [0,T]}
\| X(t) - X^N(t) \|_{L^p(\Omega,\dot{H})} \leq C \lambda_N^{-\frac \gamma2}.
\end{equation*}
\end{theorem}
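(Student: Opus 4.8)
The plan is to carry out the whole estimate in mild form. Writing $e^N(t) := X(t) - X^N(t)$ and subtracting \eqref{eq:space-mild} from the representation in Theorem~\ref{thm:uniqueness-mild-solution}, and using $P_N = P_N P$ together with $(I-P_N)P = P-P_N$ on $\dot{H}$, one obtains
\begin{align*}
e^N(t) &= \underbrace{E(t)(I-P_N)X_0}_{=:\,I_1}
 + \underbrace{\int_0^t E(t-s)(I-P_N)\,\dd W(s)}_{=:\,I_2} \\
&\quad - \underbrace{\int_0^t E(t-s)A(I-P_N)PF(X(s))\,\dd s}_{=:\,I_3}
 - \underbrace{\int_0^t E(t-s)AP_N\big(F(X(s))-F(X^N(s))\big)\,\dd s}_{=:\,I_4}.
\end{align*}
Here $I_1,I_2,I_3$ are genuine source terms that I will bound by $C\lambda_N^{-\gamma/2}$ uniformly in $t$, while $I_4$ carries the error back onto itself and will be closed by a stability argument.

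For $I_1$ I write $(I-P_N)X_0 = (I-P_N)A^{-\gamma/2}A^{\gamma/2}X_0$ and combine $\|(I-P_N)A^{-\gamma/2}\|_{\mathcal{L}(\dot{H})}\le C\lambda_N^{-\gamma/2}$ with $\|E(t)\|_{\mathcal{L}(\dot{H})}\le 1$ and Assumption~\ref{assum:intial-value-data}. The term $I_2$ is where the sharp exponent is at stake: a crude operator splitting $E(t-s)(I-P_N)Q^{1/2}=[AE(t-s)][(I-P_N)A^{-\gamma/2}][A^{(\gamma-2)/2}Q^{1/2}]$ forces the non-integrable factor $(t-s)^{-1}$, so instead I compute the second moment directly in the eigenbasis via $E(t-s)(I-P_N)=\sum_{j>N}e^{-(t-s)\lambda_j^2}\langle\cdot,e_j\rangle e_j$. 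The It\^o isometry then gives $\sum_{j>N}\|Q^{1/2}e_j\|^2(1-e^{-2t\lambda_j^2})/(2\lambda_j^2)$, and bounding $\lambda_j^{-2}=\lambda_j^{-\gamma}\lambda_j^{\gamma-2}\le\lambda_{N+1}^{-\gamma}\lambda_j^{\gamma-2}$ on $j>N$ and summing against \eqref{eq:ass-AQ-condition} yields exactly $C\lambda_N^{-\gamma}$ for the variance; Gaussianity of the stochastic convolution (equivalently Burkholder--Davis--Gundy) upgrades this to all $L^p(\Omega)$. For $I_3$ I distribute the unbounded operator as $AE(t-s)(I-P_N)PF(X(s))=\big(A^{1+\gamma/2-\beta/2}E(t-s)\big)\big((I-P_N)A^{-\gamma/2}\big)\big(A^{\beta/2}PF(X(s))\big)$, so that the middle factor produces $\lambda_N^{-\gamma/2}$, the first factor produces the time singularity $(t-s)^{-(1+\gamma/2-\beta/2)/2}$, integrable as soon as $\beta>\gamma-2$, and the last factor asks for $F(X(s))\in\dot{H}^{\beta}$. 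Invoking the algebra property \eqref{eq:norm-algebra} together with $\sup_t\|X(t)\|_{L^p(\Omega,\dot{H}^\gamma)}<\infty$ from Theorem~\ref{thm:uniqueness-mild-solution} bounds $\|F(X(s))\|_{\dot{H}^{\beta}}$ for $\beta$ up to $2$, which covers $\beta>\gamma-2$ for every $\gamma\in(\tfrac d2,4)$; the endpoint $\gamma=4$ is borderline and would require extracting slightly more than two derivatives of $F(X(s))$, using that the cubic preserves the Neumann condition.

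To close $I_4$ I would favour an energy argument based on the one-sided condition \eqref{eq:one-side-condition} rather than a bare mild-form Gronwall. Comparing $X^N$ with the projected exact solution $P_N X$, the difference $\Theta^N:=P_N X - X^N$ satisfies, pathwise with the noise cancelled, $\tfrac{\dd}{\dd t}\Theta^N=-A\big(A\Theta^N+P_N(F(X)-F(X^N))\big)$. Testing with $A^{-1}\Theta^N$ and writing $\Theta^N=e^N-(I-P_N)X$ gives $\tfrac12\tfrac{\dd}{\dd t}|\Theta^N|_{-1}^2+|\Theta^N|_1^2=-\langle e^N,F(X)-F(X^N)\rangle+\langle(I-P_N)X,F(X)-F(X^N)\rangle$. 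The first inner product is $\le\|e^N\|^2$ by \eqref{eq:one-side-condition}, so the cubic is tamed; the second is a harmless cross term controlled by \eqref{eq:local-condition}, the uniform $V$-moment bounds of $X$ and $X^N$, and the small factor $\|(I-P_N)X\|=O(\lambda_N^{-\gamma/2})$. Interpolating $\|\Theta^N\|^2\le|\Theta^N|_1|\Theta^N|_{-1}$ and using Young's inequality absorbs $|\Theta^N|_1^2$ and leaves a Gronwall inequality for $|\Theta^N|_{-1}^2$ whose linear coefficient is deterministic, so that after taking expectations one obtains $\|\Theta^N\|_{L^p(\Omega,\dot{H}^{-1})}\le C\lambda_N^{-\gamma/2}$; a final smoothing/bootstrap step lifts this from $\dot{H}^{-1}$ to $\dot{H}$, and combining with $\|(I-P_N)X\|_{L^p(\Omega,\dot{H})}\le C\lambda_N^{-\gamma/2}$ gives the claim.

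The step I expect to be the main obstacle is precisely the interaction between the unbounded operator $A$ standing in front of $F$ and the cubic growth of $F$. It is this feature that produces the $(t-s)^{-1/2}$ singularity in $I_4$ and obstructs a direct mild-form Gronwall, since the Lipschitz weight $1+\|X\|_V^2+\|X^N\|_V^2$ would raise the stochastic integrability index on the right-hand side; it is also the reason one must route the stability estimate through the lower-order norm $\dot{H}^{-1}$, where the monotonicity \eqref{eq:one-side-condition} can act, at the cost of the extra smoothing argument needed to return to $\dot{H}$. Securing the uniform-in-$N$ moment bounds $\sup_{N,t}\E[\|X^N(t)\|_V^q]<\infty$ that feed the cross terms is the other delicate ingredient, and is exactly the point isolated in \cite{qi2019sharp}.
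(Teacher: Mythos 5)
You should know at the outset that the paper contains no internal proof of Theorem \ref{th:space-rate}: it is imported wholesale as ``an immediate consequence of \cite[Theorem 3.5]{qi2019sharp}''. Your argument is therefore necessarily a different route, and the fair benchmark is the paper's own treatment of the \emph{temporal} error in Theorem \ref{theo:strong-time-discretization}, whose architecture (mild form for the linear and noise contributions, an $\dot{H}^{-1}$ energy estimate exploiting \eqref{eq:one-side-condition} for the nonlinearity, then a smoothing step back to $\dot{H}$) your proposal reproduces in the right way. Your $I_2$ estimate is correct and is the correct tool: the eigenbasis It\^{o}-isometry computation gives a variance bounded by $\lambda_{N+1}^{-\gamma}\sum_{j}\lambda_j^{\gamma-2}\|Q^{1/2}e_j\|^2=\lambda_{N+1}^{-\gamma}\|A^{\frac{\gamma-2}{2}}Q^{\frac12}\|_{\mathcal{L}_2}^2$, and Gaussianity upgrades to $L^p(\Omega)$; operator-norm splittings indeed cannot work there. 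One structural observation, however: once you commit to the energy closure for $\Theta^N=P_NX-X^N$, the terms $I_1$--$I_3$ are redundant, because $e^N=(I-P_N)X+\Theta^N$, the first piece obeys $\|(I-P_N)X(t)\|\leq C\lambda_N^{-\gamma/2}\vert X(t)\vert_\gamma$ in every $L^p(\Omega)$ by Theorem \ref{thm:uniqueness-mild-solution}, and the forcing in your $\Theta^N$-equation only ever sees $\|(I-P_N)X\|$, never the convolution $I_3$. This is good news for you: the $\gamma=4$ borderline you honestly flag in $I_3$ (where the singularity $(t-s)^{-1}$ is genuinely non-integrable) never has to be confronted, whereas in the proof as literally written it would leave the endpoint case of the theorem unproved.

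The genuine gap is the step you dismiss in half a sentence: the ``final smoothing/bootstrap step'' from $\dot{H}^{-1}$ to $\dot{H}$. The naive implementation fails quantitatively. Mimicking the paper's Step 2, one writes $\Theta^N$ in mild form, pays $(t-s)^{-(2-\eta)/4}$ for $A^{1-\eta/2}E(t-s)$, and controls $\|A^{\eta/2}(F(X)-F(X^N))\|$ via Lemma \ref{lem:H-1} by $C(1+\vert X\vert_\gamma^2+\vert X^N\vert_\gamma^2)\vert e^N\vert_1$; but $\vert e^N\vert_1\geq\vert(I-P_N)X\vert_1\sim\lambda_N^{-(\gamma-1)/2}$, so Cauchy--Schwarz in time returns only $O(\lambda_N^{-(\gamma-1)/2})$ --- half a power short. (In the paper's temporal argument this pollution cannot occur because both processes there lie in $H_N$.) Two concrete repairs work: (i) split $F(X)-F(X^N)=[F(X)-F(P_NX)]+[F(P_NX)-F(X^N)]$, estimate the first piece at the $\dot{H}$ level with \eqref{eq:local-condition} against the harmless singularity $(t-s)^{-1/2}$, which yields $\lambda_N^{-\gamma/2}$ outright, and reserve the $\dot{H}^1$-smoothing/Cauchy--Schwarz step for the second piece, which is controlled by $\vert\Theta^N\vert_1$ alone; or (ii) run a second energy identity, testing the $\Theta^N$-equation with $\Theta^N$ itself, and close it \emph{not} by Gronwall (the coefficient would be random, requiring exponential moments you do not have) but by inserting $\|\Theta^N\|^2\leq\vert\Theta^N\vert_1\vert\Theta^N\vert_{-1}$ together with the two outputs of your first energy step, namely $\sup_t\vert\Theta^N\vert_{-1}=O(\lambda_N^{-\gamma/2})$ and $\int_0^T\vert\Theta^N\vert_1^2\,\dd s=O(\lambda_N^{-\gamma})$ in all moments. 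Finally, be clear-eyed that your cross terms and both repairs consume $\sup_{N,t}\E[\|X^N(t)\|_V^{q}]<\infty$ (and $\dot{H}^{\iota}$-moments, $\iota>\tfrac d2$), which you defer to \cite{qi2019sharp} --- the very reference whose Theorem 3.5 \emph{is} the statement under proof. That citation is legitimate for an a priori bound, but a self-contained proof would need the spatially semidiscrete analogue of the paper's Section \ref{subsec:moment bounds} bootstrap, and this is exactly the ingredient the paper's authors single out as the hard part.
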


\subsection{An explicit fully  discrete scheme and its a priori moment bounds}
\label{subsec:moment bounds}
This subsection concerns the a priori moment bounds of a spatio-temporal full discretization based on the spatial spectral Galerkin approximation.
In order to introduce the fully discrete scheme, we define the nodes $t_m=m\tau$ with a uniform time step-size $\tau=\tfrac{T}{M}$ for
$m \in \{0, 1,\ldots,M \},\,  M \in \mathbb{N}^{+}$ and introduce a notation
$\lfloor {t} \rfloor_{\tau} :=t_i$ for
$ t \in [t_i ,t_{i+1}), i \in \{0,1,\ldots , M-1\}$.
It is worthwhile to mention that the fully discrete exponential Euler and fully discrete linear-implicit Euler approximations diverge strongly and numerically weakly in the case of stochastic Allen--Cahn equations \cite{beccari2019strong}.
Thus, we apply the tamed exponential Euler scheme to \eqref{eq:spatial-discrete} and get
\begin{equation}\label{full;discrete version}
\begin{split}
X_{t_{m+1}}^{M,N} & = E (\tau)X_{t_m}^{M,N}
     - \tfrac{ A^{-1} (I-E(\tau)) P_N F (X_{t_m}^{M,N})}
        {1+\tau \| P_N F (X_{t_m}^{M,N}) \|}
  \\ & \quad   + \int_{t_m}^{t_{m+1}} E(t_{m+1}-\lfloor {s} \rfloor_{\tau})
         P_N  \dd W(s).
\end{split}
\end{equation}
Particularly, the following continuous version of
 \eqref{full;discrete version} will be used frequently,
\begin{equation}\label{full;continuous version}
X_{t}^{M,N} = E (t) P_N X_0
   - \int_0^t \tfrac{ E (t-s) A P_N F (X_{\lfloor {s} \rfloor_{\tau}}^{M,N})}
       { 1 + \tau \| P_N F (X_{\lfloor {s} \rfloor_{\tau}}^{M,N}) \| } \dd s
   + \mathcal{O}_t^{M,N},
\end{equation}
which is $\mathcal{F}_t$-adapted.
Here for simplicity of presentation we denote
\begin{equation*}
\mathcal{O}_t^{M,N} := \int_0^t E( t- \lfloor {s} \rfloor_{\tau}) P_N \dd W(s),
\end{equation*}
which satisfies the following regularity result.

\begin{lemma}\label{lem:discrete-stochastic-convolution}
Suppose Assumptions \ref{assum:linear-operator-A} and \ref{assum:noise-term} hold.
Then for all $ p \ge 1$ and
$\theta \in  [ 0, \min \{ \gamma, 2 \})$,
the discrete stochastic convolution $\mathcal{O}_t^{M,N}$ satisfies
\begin{equation*}
\E \Big[ \sup_{t\in[0,T]} \vert \mathcal{O}_t^{M,N} \vert_{\theta}^p \Big] < \infty.
\end{equation*}
\end{lemma}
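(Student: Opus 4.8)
The plan is to replay the factorization argument already used for the continuous stochastic convolution in the proof of Lemma \ref{lem:stochastic-convolution}, the only genuinely new features being the piecewise-constant delay $\lfloor s\rfloor_\tau$ inside the semigroup and the spectral projection $P_N$. The key observation is that the delay can be peeled off by the semigroup law: since $t-\lfloor s\rfloor_\tau=(t-s)+(s-\lfloor s\rfloor_\tau)$ with both summands nonnegative, one has $E(t-\lfloor s\rfloor_\tau)=E(t-s)\,E(s-\lfloor s\rfloor_\tau)$, while $E(s-\lfloor s\rfloor_\tau)$ and $P_N$ are contractions on $\dot{H}$ uniformly in $s$, $\tau$ and $N$ (indeed $\|E(\cdot)\|_{\mathcal{L}(\dot{H})}\le1$ and $\|P_N\|_{\mathcal{L}(\dot{H})}\le1$). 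This lets me regard $\mathcal{O}_t^{M,N}$ as a genuine $E(t-s)$-convolution against the modified integrand $E(s-\lfloor s\rfloor_\tau)P_N\,\dd W(s)$.

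Concretely, I would fix a large $p$ and choose $\alpha\in(0,1)$ with $\tfrac1p+\tfrac\theta4<\alpha<\min\{\tfrac\gamma4,\tfrac12\}$, which is possible precisely because $\theta<\min\{\gamma,2\}\le\gamma$ forces $\tfrac\theta4<\min\{\tfrac\gamma4,\tfrac12\}$. Setting
\[
Z_\alpha(s):=\int_0^s (s-r)^{-\alpha}E(s-r)\,E(r-\lfloor r\rfloor_\tau)P_N\,\dd W(r),
\]
the semigroup identity $E(t-s)E(s-r)=E(t-r)$, the Beta integral $\int_r^t (t-s)^{\alpha-1}(s-r)^{-\alpha}\,\dd s=\tfrac{\pi}{\sin(\alpha\pi)}$, and the stochastic Fubini theorem give the factorization
\[
\mathcal{O}_t^{M,N}=\frac{\sin(\alpha\pi)}{\pi}\int_0^t (t-s)^{\alpha-1}E(t-s)Z_\alpha(s)\,\dd s,
\]
where I have used $E(t-r)E(r-\lfloor r\rfloor_\tau)=E(t-\lfloor r\rfloor_\tau)$. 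Applying $A^{\theta/2}$, invoking the smoothing bound $\|A^{\theta/2}E(t-s)\|_{\mathcal{L}(\dot{H})}\le C(t-s)^{-\theta/4}$ from \eqref{I-spatial-temporal-S(t)}, and then H\"older's inequality in $s$ (the singularity $(t-s)^{(\alpha-1-\theta/4)q}$ is integrable since $\alpha>\tfrac\theta4+\tfrac1p$), I reduce matters to proving $\sup_{s\in[0,T]}\E[\|Z_\alpha(s)\|^p]<\infty$.

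For this last bound I would apply the Burkholder--Davis--Gundy inequality to obtain
\[
\E\big[\|Z_\alpha(s)\|^p\big]\le C\Big(\int_0^s (s-r)^{-2\alpha}\big\|E(s-r)E(r-\lfloor r\rfloor_\tau)P_N Q^{1/2}\big\|_{\mathcal{L}_2}^2\,\dd r\Big)^{p/2},
\]
and then estimate the Hilbert--Schmidt factor by inserting $A^{-(\gamma-2)/2}A^{(\gamma-2)/2}$: since $E(r-\lfloor r\rfloor_\tau)$ and $P_N$ commute with $A$ and are contractions, $\|E(s-r)E(r-\lfloor r\rfloor_\tau)P_N Q^{1/2}\|_{\mathcal{L}_2}\le \|A^{-(\gamma-2)/2}E(s-r)\|_{\mathcal{L}(\dot{H})}\,\|A^{(\gamma-2)/2}Q^{1/2}\|_{\mathcal{L}_2}$, the last factor being finite by \eqref{eq:ass-AQ-condition}. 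The operator-norm factor is $\le C(s-r)^{\min\{(\gamma-2)/4,\,0\}}$ by \eqref{I-spatial-temporal-S(t)}, so the $r$-integral equals $\int_0^s (s-r)^{-2\alpha+\min\{(\gamma-2)/2,\,0\}}\,\dd r<\infty$ thanks to $\alpha<\min\{\tfrac\gamma4,\tfrac12\}$, with a bound uniform in $M$ and $N$. The claim for every $p\ge1$ then follows from the large-$p$ case by Jensen's inequality. The only genuine subtlety, and the step I would watch most carefully, is the peeling of the delay together with the verification that the contraction factors $E(s-\lfloor s\rfloor_\tau)$ and $P_N$ contribute constants independent of $\tau$, $M$ and $N$; this uniformity is exactly what makes the resulting moment bound usable in the subsequent bootstrap argument, while everything else is a faithful transcription of the continuous estimate.
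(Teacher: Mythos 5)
Your proposal is correct and takes essentially the same route as the paper: the factorization representation with the same window $\tfrac1p+\tfrac\theta4<\alpha<\min\{\tfrac\gamma4,\tfrac12\}$, stochastic Fubini plus the Beta integral, then Burkholder--Davis--Gundy and H\"older, the smoothing bound \eqref{I-spatial-temporal-S(t)} together with \eqref{eq:ass-AQ-condition}, and finally large $p$ plus Jensen. Your explicit peeling $E(t-\lfloor s\rfloor_\tau)=E(t-s)E(s-\lfloor s\rfloor_\tau)$ with the uniform contraction bounds for $E(s-\lfloor s\rfloor_\tau)$ and $P_N$ merely makes transparent what the paper uses implicitly when it bounds $\|E(s-\lfloor r\rfloor_\tau)Q^{1/2}\|_{\mathcal{L}_2}$ by a power of $(s-r)$, so the two arguments coincide.
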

\begin{proof}
Following a similar approach used in \cite[Theorem 5.10]{prato2014stochastic},
we can rewrite $\mathcal{O}_t^{M,N}$ as
\begin{equation*}
\mathcal{O}_t^{M,N} = \frac{\sin(\alpha \pi)}{\pi}
  \int_0^t ( t - s )^{\alpha - 1} E( t - s ) Y_{\alpha} (s) \dd s,
  \,\, \alpha \in (0,1)
\end{equation*}
with
\begin{equation*}
 Y_{\alpha} (s) := \int_0^s ( s - r )^{-\alpha}
                   E (s - \lfloor r \rfloor) P_N \dd W(r).
\end{equation*}
Indeed, by stochastic Fubini theorem, we get
\begin{equation*}
\begin{split}
\frac{\sin(\alpha \pi)}{\pi}
 & \int_0^t ( t - s )^{\alpha - 1} E( t - s ) Y_{\alpha} (s) \dd s
 \\ & = \frac{\sin(\alpha \pi)}{\pi}
  \int_0^t ( t - s )^{\alpha - 1} E( t - s )
  \Big[ \int_0^s ( s - r )^{-\alpha}
                   E (s - \lfloor r \rfloor) P_N \dd W(r)\Big] \dd s
 \\ & = \frac{\sin(\alpha \pi)}{\pi}
  \int_0^t \Big[ \int_r^t ( t - s )^{\alpha - 1} ( s - r )^{-\alpha}
        \dd s \Big] E ( t - \lfloor r \rfloor ) P_N \dd W(r)
 \\ & = \int_0^t E( t- \lfloor {r} \rfloor_{\tau}) P_N \dd W(r),
\end{split}
\end{equation*}
where a basic fact
\begin{equation*}
\int_r^t ( t - s )^{\alpha - 1} ( s - r )^{-\alpha} \dd s
 =  \frac{\pi}{\sin(\alpha \pi)},
 \, 0 \leq r \leq t, \, \alpha \in (0,1)
\end{equation*}
was invoked in the last equality.
As a result, using the Burkholder--Davis--Gundy inequality and H\"older's inequality leads to
\begin{equation*}
\begin{split}
\E \Big[  \sup_{t\in[0,T]} \vert \mathcal{O}_t^{M,N} \vert_{\theta}^p \Big]
 &= \E \Big[ \sup_{t\in[0,T]} \Big\vert \frac{\sin(\alpha \pi)}{\pi}
  \int_0^t ( t - s )^{\alpha - 1} E( t - s ) Y_{\alpha} (s) \dd s \Big\vert_{\theta}^p \Big]
  \\ & \leq C \, \E \Big[ \sup_{t\in[0,T]} \Big(
  \int_0^t ( t - s )^{\alpha - 1 -\frac \theta 4} \vert Y_{\alpha} (s) \vert \dd s \Big)^p \Big]
  \\ & \leq C \, \E \Big[ \sup_{t\in[0,T]} \Big(
  \int_0^t ( t - s )^{(\alpha - 1 -\frac \theta 4)q} \dd s \Big)^\frac pq
  \cdot \int_0^t \vert Y_{\alpha} (s) \vert^p \dd s  \Big]
  \\ & \leq C \int_0^T \E \big[ \vert Y_{\alpha} (s) \vert^p \big] \dd s
   \\ & \leq C \int_0^T \Big( \int_0^s ( s - r )^{ - 2 \alpha }
        \| E ( s- \lfloor {r} \rfloor_{\tau}) Q^{\frac 12}
            \|_{\mathcal{L}_2}^2 \dd r \Big)^{\frac p2} \dd s
   \\ & \leq C \int_0^T \Big( \int_0^s
          ( s - r )^{ - 2 \alpha + \min \{ \frac {\gamma-2}2,0\} }
        \| A^{\frac {\gamma-2}2} Q^{\frac 12}
            \|_{\mathcal{L}_2}^2 \dd r \Big)^{\frac p2} \dd s
   \\ & < \infty,
\end{split}
\end{equation*}
where $\alpha > \tfrac 1p + \tfrac \theta4 $ was used in the third inequality and $\alpha \in (0, \min \{ \tfrac \gamma 4, \tfrac 12 \})$ was used in the last inequality.
Finally, choosing sufficiently large $p>1$ and $\theta < \min \{ \gamma, 2\}$ completes the proof.
\end{proof}

The forthcoming lemma is a direct consequence of \cite[Lemma 3.2]{qi2019sharp}, which is crucial to the moment bound and convergence analysis.

\begin{lemma}\label{lem:H-1}
Let $F:L^6 \rightarrow H$ be the Nemytskii operator in Assumption \ref{assum:nonlinearity}.
Then it holds for any  $\iota \in (\tfrac12,1)$ and $d=1$,
\begin{equation*}
\vert F'(u)v \vert_{\iota} \leq C \big( 1 + \vert u \vert_{\iota}^2 \big) \vert v \vert_{1},
\,\, u \in \dot{H}^{\iota}, \, v \in \dot{H}^1,
\end{equation*}
and for any $\iota \in (\tfrac d2,2)$, $d=2,3$,
\begin{equation*}
\vert F'(u)v \vert_1 \leq C \big( 1 + \vert u \vert_{\iota}^2 \big) \vert v \vert_{1},
\,\, u \in \dot{H}^{\iota}, \, v \in \dot{H}^1.
\end{equation*}
\end{lemma}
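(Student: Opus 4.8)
The plan is to use the explicit form of the first derivative, $F'(u)v = f'(u)v = (3u^2-1)v = 3u^2v - v$, and to bound the two summands separately, working throughout through the norm equivalence $\vert \cdot \vert_\alpha \simeq \|\cdot\|_{H^\alpha(\mathcal D)}$ on $\dot{H}^\alpha$. The linear part $-v$ is harmless: since $\iota<1$ in dimension one, and since the target order is $1$ in dimensions two and three, the embedding $\dot{H}^1 \hookrightarrow \dot{H}^\iota$ gives $\vert v\vert_\iota \le C\vert v\vert_1$, which will account for the ``$1\cdot$'' term in the factor $1+\vert u\vert_\iota^2$. It therefore remains to control the genuinely nonlinear product $u^2 v$ in the prescribed Sobolev norm, and the two regimes $d=1$ and $d\in\{2,3\}$ are treated by different tools.

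For $d=1$ and $\iota\in(\tfrac12,1)$ I would argue purely by the Banach algebra property: since $\iota>\tfrac12=\tfrac d2$, the space $H^\iota(\mathcal D)$ is a Banach algebra, so $\|u^2 v\|_{H^\iota}\le C\|u\|_{H^\iota}^2\|v\|_{H^\iota}$, and then $\|v\|_{H^\iota}\le\|v\|_{H^1}$ because $\iota<1$. Translating back through the norm equivalence yields $\vert u^2 v\vert_\iota\le C\vert u\vert_\iota^2\vert v\vert_1$, which together with the linear contribution closes the one-dimensional case.

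For $d\in\{2,3\}$ and $\iota\in(\tfrac d2,2)$ the target is the integer-order norm $\vert\cdot\vert_1\simeq\|\cdot\|_{H^1}$, so I would estimate $\|u^2v\|_{L^2}$ and $\|\nabla(u^2v)\|_{L^2}$ directly, using the Leibniz expansion $\nabla(u^2v)=2u(\nabla u)v+u^2\nabla v$. Because $\iota>\tfrac d2$, the Sobolev embedding $\dot{H}^\iota\hookrightarrow V$ gives $\|u\|_V\le C\vert u\vert_\iota$, whence $\|u^2v\|_{L^2}\le\|u\|_V^2\|v\|_{L^2}$ and $\|u^2\nabla v\|_{L^2}\le\|u\|_V^2\|\nabla v\|_{L^2}$ are both bounded by $C\vert u\vert_\iota^2\vert v\vert_1$.

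The hard part is the remaining mixed term $u(\nabla u)v$, and it is precisely this term that dictates the restriction $\iota>\tfrac d2$. I would bound $\|u(\nabla u)v\|_{L^2}\le\|u\|_V\|(\nabla u)v\|_{L^2}$ and then split $\|(\nabla u)v\|_{L^2}\le\|\nabla u\|_{L^p}\|v\|_{L^q}$ by H\"older with $\tfrac1p+\tfrac1q=\tfrac12$. The delicate point is choosing $(p,q)$ so that both factors are controlled by the available norms: since $u\in\dot{H}^\iota$ one has $\nabla u\in H^{\iota-1}$ with $\iota-1>\tfrac d2-1$, and the fractional Sobolev embedding $H^{\iota-1}(\mathcal D)\hookrightarrow L^p$ with $\tfrac1p=\tfrac12-\tfrac{\iota-1}{d}$ forces $p\ge3$ when $d=3$ (take $p=3$, $q=6$ and use $H^1\hookrightarrow L^6$) and allows some $p>2$ when $d=2$ (so that $q<\infty$ and $H^1\hookrightarrow L^q$ holds in two dimensions). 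The condition $\iota>\tfrac d2$ is exactly what keeps $q$ finite and hence makes $\|v\|_{L^q}\le C\vert v\vert_1$ available; this is the crux of the argument. Collecting the three pieces gives $\|u^2v\|_{H^1}\le C\vert u\vert_\iota^2\vert v\vert_1$, and adding the linear contribution produces the asserted bound.
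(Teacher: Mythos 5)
Your proof is correct, but it is worth noting that the paper itself does not prove this lemma at all: it is stated as ``a direct consequence of [Qi--Cai--Wang, arXiv:2204.01630, Lemma 3.2]'', so your self-contained argument is genuinely different in that it replaces an external citation with an explicit derivation. Your two-regime strategy is the natural one: in $d=1$ the Banach algebra property of $H^{\iota}(\mathcal D)$ for $\iota>\tfrac12$ settles everything at once, while in $d=2,3$ the Leibniz splitting $\nabla(u^2v)=2u(\nabla u)v+u^2\nabla v$ isolates the only delicate term $u(\nabla u)v$, and your H\"older/Sobolev bookkeeping there ($p=3$, $q=6$ for $d=3$; $p=2/(2-\iota)$, $q=2/(\iota-1)$ for $d=2$) is accurate and correctly identifies $\iota>\tfrac d2$ as the binding constraint. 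One small technical point you should patch: for $d=2$ the range $\iota\in(1,2)$ contains $\iota=\tfrac32$, where the paper's stated norm equivalence $\vert\cdot\vert_{\alpha}\simeq\|\cdot\|_{H^{\alpha}(\mathcal D)}$ is not available (it is asserted only for $\alpha\in[0,\tfrac32)\cup(\tfrac32,2]$), so the step $\|u\|_{H^{\iota}}\leq C\vert u\vert_{\iota}$ needs a separate justification at that single exponent. This is harmless: either invoke one-sided interpolation ($\dot H^{3/2}=[\dot H^1,\dot H^2]_{1/2}\hookrightarrow[H^1,H^2]_{1/2}=H^{3/2}$, which is the only direction you use), or simply note that since $\lambda_1>0$ the norms satisfy $\vert u\vert_{\iota'}\leq C\vert u\vert_{\iota}$ for $\iota'<\iota$, so the claimed bound at $\iota=\tfrac32$ follows from the already-proved bound at any $\iota'\in(1,\tfrac32)$. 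With that remark added, your argument is complete.
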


Next we construct a sequence of decreasing subevents
\begin{equation*}
\Omega_{R,t_i}= \big\{ \omega \in \Omega :\sup\limits_{j\in\{0,1,\ldots,i\}}
   \| X_{t_j}^{M,N}(\omega) \|_{L^6} \! \leq R \big\},
      R \in (0,\infty), i \in \{0,1,\ldots,M\}.
\end{equation*}
By $\Omega^c$ and $\chi_{\Omega}$ we denote the complement and indicator function of a set $\Omega$, respectively.
It is easy to see that $ \chi_{\Omega_{R,t_i}}$ is
$ \mathcal{F}_{t_i}$-adapted
and $\chi_{\Omega_{R,t_i}} \leq \chi_{\Omega_{R,t_j}}$ for $t_i \geq t_j$.
Besides, we introduce a process $Y_t^{M,N}$ by
\begin{equation*}
Y_t^{M,N} := X_t^{M,N} - \mathcal{O}_t^{M,N}
   = E(t) P_N X_0 - \int_0^t
    \tfrac{ E(t-s) A P_N F (X_{\lfloor {s} \rfloor_{\tau}}^{M,N})}
          { 1 + \tau \| P_N F (X_{\lfloor {s} \rfloor_{\tau}}^{M,N}) \|}\dd s,
\end{equation*}
which can be rewritten as
\begin{equation*}
Y_t^{M,N} = E(t) P_N X_0 -
\int_0^t \!\! E(t-s) A P_N F( X_s^{M,N} ) \dd s
     + \int_0^t \!\! E(t-s) A P_N Z_s^{M,N} \dd s,
\end{equation*}
with
$Z_t^{M,N} := F(X_t^{M,N}) -
    \tfrac{F(X_{\lfloor {t} \rfloor_{\tau}}^{M,N})}
   {1+ \tau \| P_N F(X_{\lfloor {t} \rfloor_{\tau}}^{M,N}) \|}$.
Then, for $t \in (0,T]$, $Y_t^{M,N}$ satisfies
\begin{equation}\label{eq:moment-Yt}
\tfrac{\dd}{\dd t} Y_t^{M,N} + A^2 Y_t^{M,N}
    + A P_N F( Y_t^{M,N} + \mathcal{O}_t^{M,N} ) = A P_N Z_t^{M,N}.
\end{equation}
Equipped with the above preparations, we are ready to present the following two lemmas, which aim to bound the numerical approximations on the well-chosen subevents $\Omega_{R_{\tau},t_{i-1}}$.
\begin{lemma}\label{lem:Ys-bound-subevents1}
Suppose Assumptions \ref{assum:linear-operator-A}--\ref{assum:intial-value-data} are valid.
Let $p \in [1,\infty)$ and $R_\tau = \tau^{- \min \{  \frac 4{81}, \frac {\gamma}{24}\}}$ for $\gamma \in (\tfrac d2,4]$ coming from \eqref{eq:ass-AQ-condition}.
Then for $i \in \{0,1,\ldots,M\}$,
\begin{align}\label{eq:moment-1-A}
\begin{split}
\Big\| \sup_{s\in[0,t_i]} \chi_{\Omega_{R_{\tau},t_{i-1}}}
 & Y_{s}^{M,N} \Big\|_{L^{2p}(\Omega,\dot{H})}^2
    + \Big\|  \int_0^{t_i} \chi_{\Omega_{R_{\tau},t_{i-1}}}
      \| A Y_s^{M,N} \|^2 \dd s \Big\|_{L^p(\Omega,\mathbb{R})}
  \\ & + \Big\| \int_0^{t_i} \chi_{\Omega_{R_{\tau},t_{i-1}}}
        \| \nabla [(Y_s^{M,N})^2] \|^2 \dd s
      \Big\|_{L^p(\Omega,\mathbb{R})} <\infty.
\end{split}
\end{align}
\end{lemma}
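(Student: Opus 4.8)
The plan is to derive a pathwise energy identity for the auxiliary process $Y_t^{M,N}$ and then convert it into the stated moment bounds via a Gronwall argument together with the regularity of the discrete stochastic convolution. Since \eqref{eq:moment-Yt} is, for each fixed realization of the noise, an ordinary differential equation in $\dot H$, I would first test it with $Y_t^{M,N}$ in $H$. Using the self-adjointness of $A$, the commutation of $A$ with $P_N$, the identity $P_N Y_t^{M,N}=Y_t^{M,N}$, and integration by parts under the Neumann boundary condition, this produces
\begin{equation*}
\tfrac12 \tfrac{\dd}{\dd t} \| Y_t^{M,N} \|^2
  + \| A Y_t^{M,N} \|^2
  + \langle \nabla F( Y_t^{M,N} + \mathcal{O}_t^{M,N} ), \nabla Y_t^{M,N} \rangle
  = \langle Z_t^{M,N}, A Y_t^{M,N} \rangle .
\end{equation*}
Because $\chi_{\Omega_{R_\tau,t_{i-1}}}$ is $\mathcal{F}_{t_{i-1}}$-measurable and constant in $t$, it passes through $\tfrac{\dd}{\dd t}$, so the whole identity may be multiplied by it.

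The heart of the argument is the nonlinear term. Writing $u=Y_t^{M,N}+\mathcal{O}_t^{M,N}$ and $F(u)=u^3-u$, I would expand $\langle\nabla F(u),\nabla Y_t^{M,N}\rangle$ and isolate the cubic self-interaction $3\int_{\mathcal D}(Y_t^{M,N})^2|\nabla Y_t^{M,N}|^2\,\dd x=\tfrac34\|\nabla[(Y_t^{M,N})^2]\|^2$, which supplies the third dissipative quantity in \eqref{eq:moment-1-A}. All remaining contributions are cross terms carrying at least one factor of $\mathcal{O}_t^{M,N}$ or $\nabla\mathcal{O}_t^{M,N}$; each of them I would bound by Young's inequality so as to absorb a small multiple of $\|AY_t^{M,N}\|^2$ and of $\|\nabla[(Y_t^{M,N})^2]\|^2$ into the left-hand side, leaving factors $\|Y_t^{M,N}\|^2$ and $\|\nabla Y_t^{M,N}\|^2$ multiplied by powers of $\|\mathcal{O}_t^{M,N}\|_V$. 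The interpolation $\|\nabla Y_t^{M,N}\|^2=|Y_t^{M,N}|_1^2\le\varepsilon\|AY_t^{M,N}\|^2+C_\varepsilon\|Y_t^{M,N}\|^2$ then reduces the surviving terms to $\|Y_t^{M,N}\|^2$ times an $\mathcal{O}$-dependent coefficient, whose every moment is finite and uniform in $M,N$ by Lemma \ref{lem:discrete-stochastic-convolution} and the embedding $\dot H^\theta\subset V$ for $\theta\in(\tfrac d2,\min\{\gamma,2\})$.

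Next I would control the taming remainder $\chi_{\Omega_{R_\tau,t_{i-1}}}\langle Z_t^{M,N},AY_t^{M,N}\rangle$ on the subevent. After a further Young inequality it suffices to bound $\chi_{\Omega_{R_\tau,t_{i-1}}}\|Z_t^{M,N}\|^2$, and here I would split $Z_t^{M,N}$ into the consistency part $F(X_t^{M,N})-F(X_{\lfloor t\rfloor_\tau}^{M,N})$ and the taming correction $F(X_{\lfloor t\rfloor_\tau}^{M,N})\cdot\tfrac{\tau\|P_NF(X_{\lfloor t\rfloor_\tau}^{M,N})\|}{1+\tau\|P_NF(X_{\lfloor t\rfloor_\tau}^{M,N})\|}$. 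For $t\le t_i$ one has $\lfloor t\rfloor_\tau\le t_{i-1}$, so on $\Omega_{R_\tau,t_{i-1}}$ the grid value satisfies $\|X_{\lfloor t\rfloor_\tau}^{M,N}\|_{L^6}\le R_\tau$; the correction is then bounded by $C\tau\|F(X_{\lfloor t\rfloor_\tau}^{M,N})\|^2\le C\tau R_\tau^6=C\tau^{\,1-6\min\{4/81,\gamma/24\}}$, which stays bounded precisely because of the choice $R_\tau=\tau^{-\min\{4/81,\gamma/24\}}$. The consistency part I would estimate with the local Lipschitz bound \eqref{eq:local-condition} and the temporal regularity of the scheme, writing $X^{M,N}=Y^{M,N}+\mathcal{O}^{M,N}$ so that the unknown re-enters only through $\|Y^{M,N}\|^2$ and is handled by Gronwall.

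Collecting the estimates yields a pathwise differential inequality of the form $\tfrac{\dd}{\dd t}\big(\chi_{\Omega_{R_\tau,t_{i-1}}}\|Y_t^{M,N}\|^2\big)+\tfrac12\chi_{\Omega_{R_\tau,t_{i-1}}}\|AY_t^{M,N}\|^2+\tfrac12\chi_{\Omega_{R_\tau,t_{i-1}}}\|\nabla[(Y_t^{M,N})^2]\|^2\le \chi_{\Omega_{R_\tau,t_{i-1}}}\,g_t\,(1+\|Y_t^{M,N}\|^2)$ with an integrable, $M,N$-uniform random coefficient $g_t$ built from powers of $\|\mathcal{O}^{M,N}\|_V$ and from the $\tau R_\tau^6$ term; the initial datum $Y_0^{M,N}=P_NX_0$ contributes $\|Y_0^{M,N}\|^2\le C|X_0|_\gamma^2$, which has finite moments by Assumption \ref{assum:intial-value-data}. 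Integrating over $[0,t_i]$, applying Gronwall pathwise, and then taking $L^p(\Omega)$ norms using $\E[\sup_{t}\|\mathcal{O}_t^{M,N}\|_V^p]<\infty$ from Lemma \ref{lem:discrete-stochastic-convolution} delivers all three bounds in \eqref{eq:moment-1-A} at once. The main obstacle, and the reason the unbounded operator in front of $F$ makes this genuinely harder than the Allen--Cahn case, is the estimation of the $\mathcal{O}$-cross terms and of $\langle Z_t^{M,N},AY_t^{M,N}\rangle$: one cannot avoid the full $\dot H^2$-type quantity $\|AY_t^{M,N}\|$, so the cross terms must be distributed by dimension-dependent Sobolev and Gagliardo--Nirenberg inequalities to ensure that the surviving dissipation $\|AY_t^{M,N}\|^2+\|\nabla[(Y_t^{M,N})^2]\|^2$ is strong enough to absorb them for all $d\in\{1,2,3\}$.
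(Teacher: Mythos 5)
Your overall architecture---test \eqref{eq:moment-Yt} with $Y_t^{M,N}$, extract the dissipation $\|AY_t^{M,N}\|^2+\tfrac34\|\nabla[(Y_t^{M,N})^2]\|^2$, bound $Z_t^{M,N}$ on the subevent via the taming structure and the choice of $R_\tau$, then apply Gronwall---reproduces only the \emph{second} half of the paper's proof, and the step you gloss over is exactly where it fails. The worst cross term is the one quadratic in $Y$ and linear in $\mathcal{O}$: in the underived form it is $\langle 3(Y_t^{M,N})^2\mathcal{O}_t^{M,N}, AY_t^{M,N}\rangle$, and Young's inequality turns it into $\varepsilon\|AY_t^{M,N}\|^2+C\|\mathcal{O}_t^{M,N}\|_V^2\|Y_t^{M,N}\|_{L^4}^4$. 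The factor $\|Y\|_{L^4}^4$ is \emph{quartic} in $Y$ and cannot be reduced to $\|Y\|^2$ times an $\mathcal{O}$-dependent coefficient, contrary to your claim: writing $\|Y\|_{L^4}^4=\|Y^2\|^2\le C(\|\nabla[Y^2]\|^2+\|Y\|^4)$ leaves either the unbounded random factor $\|\mathcal{O}\|_V^2$ multiplying $\|\nabla[Y^2]\|^2$ (which cannot be absorbed into the fixed dissipation $\tfrac32\|\nabla[Y^2]\|^2$) or a genuinely quartic term $\|Y\|^4$, for which a pathwise Gronwall argument does not close. Your alternative of writing the nonlinearity as $\langle\nabla F(Y+\mathcal{O}),\nabla Y\rangle$ does not escape this: it trades the difficulty for cross terms carrying $\nabla\mathcal{O}_t^{M,N}$, and Lemma \ref{lem:discrete-stochastic-convolution} controls $\mathcal{O}^{M,N}$ uniformly in $M,N$ only in $\dot{H}^{\theta}$ with $\theta<\min\{\gamma,2\}$, so for low-regularity noise (e.g.\ $d=1$, $\gamma\in(\tfrac12,1]$) $\nabla\mathcal{O}^{M,N}$ is not even bounded in $L^2$ uniformly in $N$.

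The missing idea is the paper's preliminary $\dot{H}^{-1}$-level energy estimate: before testing with $Y_t^{M,N}$, test \eqref{eq:moment-Yt} with $A^{-1}Y_t^{M,N}$. At that level no derivative falls on the cubic term, so $\langle Y^3,Y\rangle=\|Y\|_{L^4}^4$ appears with a favorable sign and dominates all cross terms by Young's inequality, yielding the a priori bound \eqref{eq;v_n1} on $\int_0^{t_i}\vert Y_s^{M,N}\vert_1^2\,\dd s$ and, crucially, on $\int_0^{t_i}\|Y_s^{M,N}\|_{L^4}^4\,\dd s$. Only with this in hand can the $L^2$-level estimate be closed, by bounding $\int_0^{t_i}\|Y_s\|_{L^4}^4\|\mathcal{O}_s\|_V^2\,\dd s\le\sup_{s}\|\mathcal{O}_s^{M,N}\|_V^2\int_0^{t_i}\|Y_s^{M,N}\|_{L^4}^4\,\dd s$ (and similarly the $\int\vert Y_s\vert_1^2$ term that survives on the right-hand side). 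A secondary, fixable omission: for the consistency part of $Z$ you invoke \eqref{eq:local-condition}, which needs $V$-norms of $X_s^{M,N}$ at \emph{intermediate} times, while the subevent only controls $L^6$-norms of grid values; the paper supplies this through the mild-form bound \eqref{estimate-XSMN}, and the exponent $\tfrac{4}{81}$ in $R_\tau$ is fixed precisely so that products like $\tau^{4/9}R_\tau^{9}$ stay bounded---a verification your sketch does not perform.
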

\begin{proof}
One can easily derive from \eqref{eq:moment-Yt} that
\begin{equation*}
\begin{split}
\langle \tfrac{\dd}{\dd t} & Y_t^{M,N} , A^{-1} Y_t^{M,N} \rangle
 + \langle A^2 Y_t^{M,N}, A^{-1} Y_t^{M,N} \rangle
 \\& \quad  + \langle A P_N F( Y_t^{M,N} + \mathcal{O}_t^{M,N} ) , A^{-1} Y_t^{M,N} \rangle
     = \langle A P_N Z_t^{M,N} , A^{-1} Y_t^{M,N} \rangle,
\end{split}
\end{equation*}
which can be rewritten as
\begin{equation*}
\tfrac12 \tfrac{\dd}{\dd t} \vert Y_t^{M,N}\vert_{-1}^2
   + \vert Y_t^{M,N} \vert_1^2
   + \langle  F( Y_t^{M,N} + \mathcal{O}_t^{M,N} ),  Y_t^{M,N} \rangle
   = \langle  Z_t^{M,N},  Y_t^{M,N} \rangle.
\end{equation*}
Integrating over $[0,t_i]$ and then using Young's inequality yield
\begin{equation*}
\begin{split}
\vert & Y_{t_i}^{M,N}  \vert_{-1}^2 - \vert Y_0^{M,N} \vert_{-1}^2
   \\ & = - 2 \int_0^{t_i} \vert Y_s^{M,N} \vert_1^2 \dd s
          - 2 \int_0^{t_i}  \big\langle F(Y_s^{M,N} + \mathcal{O}_s^{M,N}),
            Y_s^{M,N} \big\rangle \dd s
   \\ & \quad  + 2 \int_0^{t_i} \big\langle Z_s^{M,N}, Y_s^{M,N} \big\rangle \dd s
   \\ & = - 2 \int_0^{t_i}  \vert Y_s^{M,N} \vert_1^2 \dd s
          - 2 \int_0^{t_i}  \|Y_s^{M,N}\|_{L^4}^4 \dd s
          + 2 \int_0^{t_i}  \|Y_s^{M,N}\|^2 \dd s
   \\ & \quad - 2 \int_0^{t_i} \big\langle 3(Y_s^{M,N})^2 \mathcal{O}_s^{M,N}
          + 3 Y_s^{M,N} (\mathcal{O}_s^{M,N})^2 + (\mathcal{O}_s^{M,N})^3
          - \mathcal{O}_s^{M,N}, Y_s^{M,N} \big\rangle \dd s
   \\ & \quad   + 2 \int_0^{t_i}  \big\langle Z_s^{M,N}, Y_s^{M,N}
   \big\rangle \dd s
   \\ & \leq  -  \int_0^{t_i} \vert Y_s^{M,N} \vert_1^2 \dd s
              -  \int_0^{t_i} \|Y_s^{M,N}\|_{L^4}^4 \dd s
              + C \int_0^{t_i} \vert Y_s^{M,N} \vert_{-1}^2 \dd s
   \\ & \quad + C \int_0^{t_i} \vert Z_s^{M,N} \vert_{-1}^2 \dd s
    + C \int_0^{t_i} \big( 1 + \|\mathcal{O}_s^{M,N}\|_{L^4}^4 \big) \dd s.
\end{split}
\end{equation*}
It follows from Gronwall's inequality that
\begin{equation*}
\vert Y_{t_i}^{M,N} \vert_{-1}^2 \leq C \Big( \vert Y_0^{M,N} \vert_{-1}^2
     + \int_0^{t_i} \vert Z_s^{M,N} \vert_{-1}^2 \dd s
     + \int_0^{t_i} ( 1 + \| \mathcal{O}_s^{M,N} \|_{L^4}^4 ) \dd s \Big),
\end{equation*}
which implies that
\begin{equation}\label{eq;v_n1}
\begin{split}
\int_0^{t_i} & \vert Y_s^{M,N} \vert_{1}^2 \dd s
   + \int_0^{t_i}  \|Y_s^{M,N}\|_{L^4}^4 \dd s
     \\ & \leq C \Big( \vert Y_0^{M,N} \vert_{-1}^2
      + \int_0^{t_i}  \vert  Z_s^{M,N}  \vert_{-1}^2 \dd s
      + \int_0^{t_i}  ( 1 + \|\mathcal{O}_s^{M,N}\|_{L^4}^4 ) \dd s \Big).
\end{split}
\end{equation}
Since $\|Y_0^{M,N}\|_{L^p(\Omega,\dot{H}^{\gamma})} < \infty$ and
 $\|\mathcal{O}_t^{M,N}\|_{L^p(\Omega,\dot{H}^{\gamma})} < \infty$, we deduce
\begin{equation*}
\begin{split}
\Big\| \int_0^{t_i} \!\! \vert Y_s^{M,N} \vert_{1}^2 \dd s \Big\|_{L^p(\Omega,\mathbb{R})}
  & + \Big\| \int_0^{t_i} \!\! \|Y_s^{M,N}\|_{L^4}^4 \dd s
         \Big\|_{L^p(\Omega,\mathbb{R})}
  \\ &  \leq C \Big( 1 + \int_0^{t_i} \| Z_s^{M,N}
        \|_{L^{2p}(\Omega,\dot{H}^{-1})}^2 \dd s \Big).
\end{split}
\end{equation*}
Taking inner product of \eqref{eq:moment-Yt} by $Y_t^{M,N}$ and integrating from $0$ to $t_i$ lead to
\begin{footnotesize}
\begin{equation*}
\begin{split}
 \| & Y_{t_i }^{M,N} \|^2  - \| Y_0^{M,N} \|^2
 \\ & = - 2 \int_0^{t_i}  \|A Y_s^{M,N}\|^2 \dd s
        - 2 \int_0^{t_i}  \langle A F
        \big( Y_s^{M,N} + \mathcal{O}_s^{M,N} \big), Y_s^{M,N} \rangle \dd s
        + 2 \int_0^{t_i}\langle Z_s^{M,N}, A Y_s^{M,N} \rangle \dd s
 \\ & \leq - \int_0^{t_i}  \| A Y_s^{M,N} \|^2 \dd s
        - 6 \int_0^{t_i} \| Y_s^{M,N} \nabla Y_s^{M,N} \|^2 \dd s
        + 2 \int_0^{t_i}  \vert Y_s^{M,N} \vert_{1}^2 \dd s
        +   \int_0^{t_i}  \|Z_s^{M,N}\|^2 \dd s
 \\ & \quad - 2 \int_0^{t_i} \langle 3 ( Y_s^{M,N})^2  \mathcal{O}_s^{M,N}
        + 3 Y_s^{M,N} ( \mathcal{O}_s^{M,N} )^2
        + (\mathcal{O}_s^{M,N})^3 - \mathcal{O}_s^{M,N},
          A Y_s^{M,N}  \rangle \dd s
 \\ & \leq - \tfrac 12 \int_0^{t_i}  \|A Y_s^{M,N}\|^2 \dd s
           - \tfrac32  \int_0^{t_i}  \|\nabla [( Y_s^{M,N} )^2] \|^2 \dd s
           + 2 \int_0^{t_i} \vert Y_s^{M,N} \vert_1^2 \dd s
           + \int_0^{t_i}  \|Z_s^{M,N}\|^2 \dd s
 \\ & \quad + C \int_0^{t_i} \Big(
               \|Y_s^{M,N}\|_{L^4}^4 \|\mathcal{O}_s^{M,N}\|_V^2
            +  \|Y_s^{M,N}\|^2  \|\mathcal{O}_s^{M,N}\|_V^4
            +  \|\mathcal{O}_s^{M,N}\|_{L^6}^6
            +  \|\mathcal{O}_s^{M,N}\|^2 \Big) \dd s.
\end{split}
\end{equation*}
\end{footnotesize}
Again, the use of \eqref{eq;v_n1} gives
\begin{equation*}
\begin{split}
\| Y_{t_i}^{M,N} \|^2 & + \int_0^{t_i} \| A Y_s^{M,N} \|^2 \dd s
   +  \int_0^{t_i}  \|\nabla [ (Y_s^{M,N})^2 ] \|^2 \dd s
    \\ & \leq C \Big( 1+\sup_{ s \in [0,T]} \|\mathcal{O}_s^{M,N}\|_V^8 \Big)
     \big( 1 + \| Y_0^{M,N} \|^2 + \int_0^{t_i} \| Z_s^{M,N} \|^2 \dd s \big).
\end{split}
\end{equation*}
At the moment, we turn to the estimate
$ \E \big[ \chi_{\Omega_{R_{\tau},t_{i-1}}} \|Z_s^{M,N}\|^{2p} \big] $.
For $s \in [0,t_i]$, we have
\begin{equation*}
\begin{split}
 & \chi_{\Omega_{R_{\tau},t_{i-1}}}  \| Z_s^{M,N} \|
 \\  & \quad \leq
\chi_{\Omega_{R_{\tau},t_{i-1}}} \big\| F (X_s^{M,N})
    - F ( X_{\lfloor {s} \rfloor_{\tau}}^{M,N} ) \big\|
 \\  & \qquad + \chi_{\Omega_{R_{\tau},t_{i-1}}}
  \Big\| F (X_{\lfloor {s} \rfloor_{\tau}}^{M,N})
    - \tfrac{ F (X_{\lfloor {s} \rfloor_{\tau}}^{M,N}) }
    { 1 + \tau \| P_N F (X_{\lfloor {s} \rfloor_{\tau}}^{M,N}) \|} \Big\|
 \\ & \quad  \leq C \, \chi_{\Omega_{R_{\tau},t_{i-1}}}
  \big( 1 + \| X_{\lfloor {s} \rfloor_{\tau}}^{M,N} \|_{V}^2
         +  \| X_s^{M,N} \|_{V}^2 \big)
  \big( \| X_s^{M,N} - X_{\lfloor {s} \rfloor_{\tau}}^{M,N} \| \big)
 \\ & \qquad + C \, \chi_{\Omega_{R_{\tau},t_{i-1}}}
     \tau \| F (X_{\lfloor {s} \rfloor_{\tau}}^{M,N}) \|^2.
\end{split}
\end{equation*}
Before further proof, we claim that
\begin{equation}\label{estimate-XSMN}
\chi_{ \Omega_{R_{\tau},t_{i-1}} } \| X_s^{M,N} \|_{V}
  \leq C \big( 1 + \|X_0\|_{V} + R_{\tau}^3 +
       \| \mathcal{O}_s^{M,N} \|_{V} \big),
       \, \forall \, s \in [0,t_i).
\end{equation}
Indeed, by stability of the semigroup $E(t)$ in $V$ and Sobolev embedding inequality $\dot{H}^{\delta} \subset V, \delta > \tfrac d2$,
\begin{equation*}
\begin{split}
& \chi_{ \Omega_{R_{\tau},t_{i-1}} } \| X_s^{M,N} \|_{V}
  \\ & \quad \leq \chi_{ \Omega_{R_{\tau},t_{i-1}} }  \Big(
  \| E(s) P_N X_0\|_{V} + \int_0^s \| E ( s - r ) A
           F ( X_{\lfloor {r} \rfloor_{\tau}}^{M,N}) \|_{V} \dd r
   + \| \mathcal{O}_s^{M,N}\|_{V} \Big)
  \\ & \quad \leq \chi_{ \Omega_{R_{\tau},t_{i-1}} }  \Big(
        \| X_0\|_{V} + \int_0^s  ( s - r )^{- \frac {2+\delta}4}
          \| P F ( X_{\lfloor {r} \rfloor_{\tau}}^{M,N}) \| \dd r
           + \| \mathcal{O}_s^{M,N} \|_{V}  \Big)
  \\ & \quad \leq C \big( 1 + \|X_0\|_{V} + R_{\tau}^3 +
       \| \mathcal{O}_s^{M,N} \|_{V} \big).
\end{split}
\end{equation*}
Next, owing to  \eqref{full;continuous version}, one can write
\begin{equation*}
\begin{split}
X_s^{M,N} - X_{\lfloor {s} \rfloor_{\tau}}^{M,N}
    &  = [ E (s) - E (\lfloor {s} \rfloor_{\tau}) ] P_N X_0
   - \int_0^s \tfrac { E(s-u)A P_N F (X_{\lfloor {u} \rfloor_{\tau}}^{M,N})}
         { 1 + \tau \| P_N F (X_{\lfloor {u} \rfloor_{\tau}}^{M,N}) \| } \dd u
 \\ & \quad + \int_0^{\lfloor {s} \rfloor_{\tau}}
    \tfrac { E( \lfloor {s} \rfloor_{\tau} - u )A P_N
       F (X_{\lfloor {u} \rfloor_{\tau}}^{M,N}) }
    { 1 + \tau \|P_N F (X_{\lfloor {u} \rfloor_{\tau}}^{M,N}) \|} \dd u
   + \mathcal{O}_s^{M,N} - \mathcal{O}_{\lfloor {s} \rfloor_{\tau}}^{M,N},
\end{split}
\end{equation*}
which implies that
\begin{equation*}
\begin{split}
 & \chi_{ \Omega_{R_{\tau},t_{i-1}} }
    \| X_s^{M,N} - X_{\lfloor {s} \rfloor_{\tau}}^{M,N} \|
 \\ &  \leq \tau^{ \frac{\gamma}{4} } \vert X_0 \vert_{\gamma}
   + \chi_{\Omega_{R_{\tau},t_{i-1}}} \Big\|
       \int_0^{\lfloor {s} \rfloor_{\tau}} \!\!
        E( \lfloor {s} \rfloor_{\tau} - u )
       ( E( s - \lfloor {s} \rfloor_{\tau} ) - I )
       \tfrac { A P_N F (X_{\lfloor {u} \rfloor_{\tau}}^{M,N}) }
         { 1 + \tau \| P_N F (X_{\lfloor {u} \rfloor_{\tau}}^{M,N}) \| } \dd u
            \Big\|
\\ & \quad + \chi_{\Omega_{R_{\tau},t_{i-1}}}
       \int_{\lfloor {s} \rfloor_{\tau}}^s \Big\|
     \tfrac { E(s- u) A P_N F (X_{\lfloor {u} \rfloor_{\tau}}^{M,N}) }
        { 1 + \tau \|P_N F (X_{\lfloor {u} \rfloor_{\tau}}^{M,N}) \|}
         \Big\|  \dd u
   + \chi_{\Omega_{R_{\tau},t_{i-1}}} \| \mathcal{O}_s^{M,N}
      - \mathcal{O}_{\lfloor {s} \rfloor_{\tau}}^{M,N} \|
\\ &  \leq \tau^{\frac{\gamma}{4}} \vert X_0 \vert_{\gamma}
    + C \big( 1 + R_{\tau}^3 \big )
    \Big( \tau^{\frac 49}
      \int_0^{\lfloor {s} \rfloor_{\tau}}
     ( \lfloor {s} \rfloor_{\tau} - u )^{-\frac {17}{18}} \dd u
     + \int_{\lfloor {s} \rfloor_{\tau}}^s ( s - u)^{-\frac 12} \dd u
     \Big)
\\ & \quad   + \|\mathcal{O}_s^{M,N} -
             \mathcal{O}_{\lfloor {s} \rfloor_{\tau}}^{M,N}\|
\\ &  \leq \tau^{\frac{\gamma}{4}} \vert X_0 \vert_{\gamma}
    + C \big( 1 + R_{\tau}^3 \big )
     \big( \tau^{\frac 49} + \tau^{\frac 12} \big)
    + \|\mathcal{O}_s^{M,N} -
             \mathcal{O}_{\lfloor {s} \rfloor_{\tau}}^{M,N}\|
\\ &  \leq \tau^{\frac{\gamma}{4}} \vert X_0 \vert_{\gamma}
    + C \big( 1 + R_{\tau}^3 \big )
      \tau^{\frac 49}
    + \|\mathcal{O}_s^{M,N} -
             \mathcal{O}_{\lfloor {s} \rfloor_{\tau}}^{M,N}\|.
\end{split}
\end{equation*}
Therefore,
\begin{equation*}
\begin{split}
\chi_{\Omega_{R_{\tau},t_{i-1}}}    \| Z_s^{M,N}\|
   & \leq C \, \tau ( 1 + R_{\tau}^6)
    \\ & \quad +  C \big( 1 + \| X_0 \|_{V}^2 +  R_{\tau}^6
         + \| \mathcal{O}_s^{M,N} \|_{V}^2
         + \| \mathcal{O}_{\lfloor {s} \rfloor_{\tau}}^{M,N} \|_{V}^2 \big)
   \\ & \quad   \times \Big(
       \tau^{ \frac { \gamma } {4} } \vert X_0 \vert_{\gamma}
           + \big( 1 + R_{\tau}^3 \big )
            \tau^{\frac 49}
           + \|\mathcal{O}_s^{M,N} -
           \mathcal{O}_{\lfloor {s} \rfloor_{\tau}}^{M,N} \| \Big).
\end{split}
\end{equation*}
Note that
 \begin{equation*}
 \E \Big[  \big\| \mathcal{O}_s^{M,N} -
    \mathcal{O}_{\lfloor {s} \rfloor_{\tau}}^{M,N} \big\|^p \Big]
    \leq C \tau^{ \mathrm{min} \{ \frac12,\frac{\gamma}{4} \}p }.
 \end{equation*}
Then taking $R_{\tau}=\tau^{- \min \{ \frac 4{81}, \frac {\gamma}{24} \}}$ leads to
 \begin{equation*}
\E \Big[ \chi_{\Omega_{R_{\tau},t_{i-1}}} \| Z_s^{M,N} \|^{2p} \Big] < \infty.
 \end{equation*}
As a result, we can deduce  that
\begin{align*}
\begin{split}
\Big\| \sup_{s\in[0,t_i]} \chi_{\Omega_{R_{\tau},t_{i-1}}}
 & Y_{s}^{M,N} \Big\|_{L^{2p}(\Omega,\dot{H})}^2
    + \Big\|  \int_0^{t_i} \chi_{\Omega_{R_{\tau},t_{i-1}}}
      \| A Y_s^{M,N} \|^2 \dd s \Big\|_{L^p(\Omega,\mathbb{R})}
  \\ & + \Big\| \int_0^{t_i} \chi_{\Omega_{R_{\tau},t_{i-1}}}
        \| \nabla [(Y_s^{M,N})^2] \|^2 \dd s
      \Big\|_{L^p(\Omega,\mathbb{R})} <\infty.
\end{split}
\end{align*}
This completes the proof.
\end{proof}

With the aid of Lemma \ref{lem:Ys-bound-subevents1},
we are able to obtain $p$-th moment bounds of
$\| X_{t_i}^{M,N} \|_{L^6}$ on the subevents
$\Omega_{R_{\tau},t_{i-1}}$.

\begin{lemma}\label{indication;a priori}
Let $p \in [1,\infty)$,
$R_\tau = \tau^{- \min \{  \frac 4{81}, \frac {\gamma}{24}\}}$ for $d=1$ and $R_\tau = \tau^{- \min \{ \frac 1{54}, \frac {\gamma-1}{36}\}}$ for $d=2,3$.
Under Assumptions \ref{assum:linear-operator-A}--\ref{assum:intial-value-data},
the fully discrete solution $X_{t_i}^{M,N}$ satisfies
\begin{equation*}
\sup_{M,N \in \N^{+}} \sup_{i \in \{0,1,\ldots,M\}}
 \E \big[ \chi_{\Omega_{R_{\tau},t_{i-1}}}
    \| X_{t_i}^{M,N} \|_{L^6}^p \big] < \infty,
\end{equation*}
where with the convention, we set $\chi_{\Omega_{R_{\tau},t_{-1}}}=1$.
\end{lemma}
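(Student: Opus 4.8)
The plan is to split $X_{t_i}^{M,N}=Y_{t_i}^{M,N}+\mathcal{O}_{t_i}^{M,N}$ and treat the two summands separately. The stochastic convolution is harmless: choosing $\theta\in(\tfrac d2,\min\{\gamma,2\})$, which is possible since $\gamma>\tfrac d2$, Lemma~\ref{lem:discrete-stochastic-convolution} together with the Sobolev embedding $\dot H^{\theta}\subset V\subset L^6$ yields $\E[\sup_{t\in[0,T]}\|\mathcal{O}_t^{M,N}\|_{L^6}^p]<\infty$, uniformly in $M,N$. Everything then reduces to a uniform pointwise-in-time bound for $\chi_{\Omega_{R_\tau,t_{i-1}}}\|Y_{t_i}^{M,N}\|_{L^6}$, and the regimes $d=1$ and $d\in\{2,3\}$ are handled by different devices, as indicated in the introduction.

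For $d=1$ I would first upgrade the dissipation estimates of Lemma~\ref{lem:Ys-bound-subevents1} to a time-integrated $L^6$ bound. The one-dimensional Gagliardo--Nirenberg inequality $\|u\|_{L^\infty}^{8}\le C\|u\|^{6}\,\|Au\|^{2}$ gives, on the subevent, $\int_0^{t_i}\chi_{\Omega_{R_\tau,t_{i-1}}}\|Y_s^{M,N}\|_{L^6}^{8}\,\dd s\le C\big(\sup_{s}\chi_{\Omega_{R_\tau,t_{i-1}}}\|Y_s^{M,N}\|\big)^{6}\int_0^{t_i}\chi_{\Omega_{R_\tau,t_{i-1}}}\|AY_s^{M,N}\|^{2}\dd s$, which is finite in every $L^p(\Omega)$ by Lemma~\ref{lem:Ys-bound-subevents1}; adding the already controlled convolution produces a uniform bound for $\int_0^{t_i}\chi\,\|X_s^{M,N}\|_{L^6}^{8}\,\dd s$. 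I would then feed this into the mild representation of $Y_{t_i}^{M,N}$: using $\dot H^{1/3}\subset L^6$, the smoothing bound \eqref{I-spatial-temporal-S(t)} in the form $\|A^{7/6}E(t)\|_{\mathcal{L}(\dot H)}\le Ct^{-7/12}$, the estimate $\|F(v)\|\le C(1+\|v\|_{L^6}^3)$, and H\"older's inequality in time with exponents $(\tfrac85,\tfrac83)$ (note $\tfrac{7}{12}\cdot\tfrac85=\tfrac{14}{15}<1$), the nonlinear term is dominated by $\int_0^{t_i}\chi\,\|X_s^{M,N}\|_{L^6}^{8}\,\dd s$. The initial and the $Z$--terms are handled by \eqref{I-spatial-temporal-S(t)} together with the moment bound for $\chi\,\|Z_s^{M,N}\|$ from the proof of Lemma~\ref{lem:Ys-bound-subevents1}, giving a uniform pointwise bound for $\chi\,|Y_{t_i}^{M,N}|_{1/3}$ and hence for $\chi\,\|Y_{t_i}^{M,N}\|_{L^6}$.

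For $d\in\{2,3\}$ the singularity $t^{-3/4}$ produced by the same mild-formula argument is no longer integrable against a cubic nonlinearity, so instead I would run a Cahn--Hilliard energy estimate that delivers a genuine pointwise gradient bound. Writing $\mu_t:=AY_t^{M,N}+F(Y_t^{M,N})$ and differentiating $\mathcal{E}(Y_t^{M,N})=\tfrac12|Y_t^{M,N}|_1^2+\tfrac14\|(Y_t^{M,N})^2-1\|^2$ along \eqref{eq:moment-Yt}, one obtains $\tfrac{\dd}{\dd t}\mathcal{E}(Y_t^{M,N})=-|\mu_t|_1^2-\langle A^{1/2}\mu_t,\,A^{1/2}(P_N F(X_t^{M,N})-F(Y_t^{M,N})-P_N Z_t^{M,N})\rangle$. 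A Young splitting absorbs $\tfrac12|\mu_t|_1^2$ into the dissipation; the remaining term is expanded via $F(Y+\mathcal{O})-F(Y)=3Y^2\mathcal{O}+3Y\mathcal{O}^2+\mathcal{O}^3-\mathcal{O}$ and bounded, on the subevent, by a constant depending on $\sup_s\|\mathcal{O}_s^{M,N}\|_V$ times a polynomial in $|Y_s^{M,N}|_1$, plus the $Z_t^{M,N}$ contribution. A Gronwall argument using $|Y_s^{M,N}|_1^2\le 2\mathcal{E}(Y_s^{M,N})+C$ and Lemma~\ref{lem:Ys-bound-subevents1} then yields $\sup_i\E[\chi_{\Omega_{R_\tau,t_{i-1}}}|Y_{t_i}^{M,N}|_1^{p}]<\infty$; combined with the $L^2$ bound of Lemma~\ref{lem:Ys-bound-subevents1}, the embedding $H^1\subset L^6$ (valid for $d\in\{2,3\}$) converts this into the claimed $L^6$ bound.

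The main obstacle will be the $d\in\{2,3\}$ energy estimate, specifically the control of the perturbation $|P_N F(X_t^{M,N})-F(Y_t^{M,N})-P_N Z_t^{M,N}|_1$. This amounts to estimating $\dot H^1$-norms of products of $Y^{M,N}$ with $\mathcal{O}^{M,N}$ and with $Z^{M,N}$ without losing the superlinear feedback into $\mathcal{E}$, and it is precisely here that the dimension-dependent threshold $R_\tau=\tau^{-\min\{1/54,(\gamma-1)/36\}}$ (through the moment bound for $\chi\,\|Z^{M,N}\|$) must be invoked to keep every constant uniform in $i$, $M$ and $N$.
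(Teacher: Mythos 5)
Your overall architecture (splitting $X^{M,N}=Y^{M,N}+\mathcal{O}^{M,N}$, Gagliardo--Nirenberg plus the mild formula in $d=1$, a Cahn--Hilliard energy estimate in $d=2,3$) is the same as the paper's, and your $d=1$ argument is sound: it uses exactly the paper's ingredients (the bounds of Lemma \ref{lem:Ys-bound-subevents1}, the smoothing exponent $(t_i-s)^{-7/12}$, a Gagliardo--Nirenberg interpolation, H\"older in time), merely reorganized --- the paper interpolates $\|Y_s^{M,N}\|_{L^6}^3\le C\|AY_s^{M,N}\|^{1/2}\|Y_s^{M,N}\|^{5/2}$ pointwise in time and applies H\"older with exponents $(\tfrac43,4)$, while you first form a time-integrated $L^8_tL^6_x$ bound and apply H\"older with $(\tfrac85,\tfrac83)$; both close.

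The gap is in $d\in\{2,3\}$, at precisely the step you flag as ``the main obstacle'' but do not resolve. You claim the perturbation $F(Y+\mathcal{O})-F(Y)=3Y^2\mathcal{O}+3Y\mathcal{O}^2+\mathcal{O}^3-\mathcal{O}$ can be bounded in $\dot H^1$, on the subevent, by $\sup_s\|\mathcal{O}_s^{M,N}\|_V$ times a polynomial in $|Y_s^{M,N}|_1$, and then run Gronwall. This fails twice over. First, no such bound exists in $d=2,3$: $\nabla(Y^2\mathcal{O})$ contains $Y\nabla Y\,\mathcal{O}$ and $Y^2\nabla\mathcal{O}$, and neither factor is controlled by powers of $|Y|_1$ alone --- in $d=2$, for instance, Ladyzhenskaya gives only $\|\nabla Y\|_{L^4}\lesssim |Y|_1^{1/2}(\|AY\|^{1/2}+|Y|_1^{1/2})$, so $\|AY\|$ necessarily enters, and $\nabla\mathcal{O}$ must be taken in $L^{2+\delta}$, which requires $|\mathcal{O}|_\theta$ with $\theta>\tfrac d2$, not merely $\|\mathcal{O}\|_V$. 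Second, even granting such a bound, its degree in $|Y|_1$ is at least two (the term $3Y^2\mathcal{O}$ is quadratic), so after the Young splitting the right-hand side feeds back at least quadratically in $\mathcal{E}$; a Gronwall inequality of the form $\mathcal{E}'\le a+b\,\mathcal{E}^2$ yields only local-in-time control, not the uniform bound claimed, and the threshold $R_\tau$ cannot repair this --- in the paper it is invoked only for the $\dot H^1$-estimate of $Z_s^{M,N}$, not for the $Y$--$\mathcal{O}$ products. The paper's actual resolution, which your proposal misses, is to bound the perturbation \emph{linearly} in the dissipation quantities,
\begin{equation*}
\big\vert P\big(3(Y_t^{M,N})^2\mathcal{O}_t^{M,N}+3Y_t^{M,N}(\mathcal{O}_t^{M,N})^2+(\mathcal{O}_t^{M,N})^3-\mathcal{O}_t^{M,N}\big)\big\vert_1
\le C\big(1+\vert (Y_t^{M,N})^2\vert_1+\|AY_t^{M,N}\|\big)\big(1+\vert\mathcal{O}_t^{M,N}\vert_\theta^3\big),
\end{equation*}
and then to integrate the energy inequality directly: the $L^p(\Omega)$-bounds on $\int_0^{t_i}\chi\|AY_s^{M,N}\|^2\,\dd s$ and $\int_0^{t_i}\chi\|\nabla[(Y_s^{M,N})^2]\|^2\,\dd s$ are already supplied by Lemma \ref{lem:Ys-bound-subevents1} --- this is exactly why the third term appears in \eqref{eq:moment-1-A} --- so no Gronwall argument, and hence no superlinear feedback, is needed at all.
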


\begin{proof}
Combining \eqref{eq:moment-1-A} with  Sobolev embedding inequality and Gagliardo--Nirenberg inequality implies that for $d=1$,
\begin{equation}\label{eq:moment-L6-1d}
\begin{split}
\E &\Big[\chi_{\Omega_{R_{\tau},t_{i-1}}} \| Y_{t_i}^{M,N} \|_{L^6}^p \Big]
   \\ & \leq \E \Big[ \chi_{\Omega_{R_{\tau},t_{i-1}}}
       \|E(t_i)  X_0\|_{L^6}^p \Big]
 \\ & \qquad   + \E \Big[ \Big\| \int_0^{t_i} \!\! \chi_{\Omega_{R_{\tau},t_{i-1}}}
         E( t_i - s ) A P_N ( Z_s^{M,N} - F(X_s^{M,N}) )\dd s
               \Big\|_{L^6}^p \Big]
  \\ & \leq C + C \E \Big[ \int_0^{t_i} ( t_i - s )^{-\frac{7}{12}}
  \\ & \qquad  \times   \big( 1 + \|\mathcal{O}_s^{M,N}\|_{L^6}^3
      + \chi_{\Omega_{R_{\tau},t_{i-1}}} \| Y_{s}^{M,N} \|_{L^6}^3
      + \chi_{\Omega_{R_{\tau},t_{i-1}}} \| Z_s^{M,N} \|
     \big) \dd s \Big]^p
  \\ & \leq C + \E \Big[ \int_0^{t_i} ( t_i - s )^{-\frac{7}{12}}
    \big( \chi_{\Omega_{R_{\tau},t_{i-1}}} \| A Y_{s}^{M,N} \|^{\frac12}
       \| Y_{s}^{M,N} \|^{\frac52} \big) \dd s \Big]^p
  \\ & \leq C + C \Big( \int_0^{t_i} ( t_i - s )^{-\frac{7}{9}} \dd s
       \Big)^{p}  \E \big[ \sup_{ s \in [0,t_i] }
        \chi_{\Omega_{R_{\tau},t_{i-1}}} \|Y_{s}^{M,N}\|^{\frac {10p}3} \big]
  \\ & \quad  + C \E \Big[ \int_0^{t_i} \chi_{\Omega_{R_{\tau},t_{i-1}}}
         \| A Y_{s}^{M,N} \|^2 \dd s \Big]^{p}
  \\ & < \infty.
\end{split}
\end{equation}
For  $d=2,3$, we define a Lyapunov functional $J(u)$ by
\begin{align*}
J(u)= \frac12 \|\nabla u\|^2 + \int_{\mathcal{D}} \Phi(u) \dd x,
\end{align*}
where $\Phi$ is the primitive of $F$.
Multiplying \eqref{eq:moment-Yt} by $A^{-1} \dot{Y}_t^{M,N}$ yields
\begin{equation*}
\big \vert \dot{Y}_t^{M,N} \big \vert_{-1}^2
  + \tfrac12 \tfrac{ \dd \vert Y_t^{M,N} \vert_1^2 } { \dd t }
  + \langle F( Y_t^{M,N} + \mathcal{O}_t^{M,N} ), \dot{Y}_t^{M,N} \rangle
  = \langle Z_t^{M,N}, \dot{Y}_t^{M,N} \rangle.
\end{equation*}
To proceed further, owing to H\"{o}lder's inequality,
for $\theta > \tfrac d2$, we have
\begin{equation*}
\begin{split}
\| ( Y_t^{M,N} )^2 \nabla \mathcal{O}_t^{M,N} \|
  & \leq \| ( Y_t^{M,N} )^2 \|_{L^{ \frac{2(2+\delta)}{\delta} }}
        \| \nabla \mathcal{O}_t^{M,N} \|_{L^{2+\delta}}
  \\ & \leq C \vert ( Y_t^{M,N}  )^2 \vert_1 \vert \mathcal{O}_t^{M,N} \vert_{\theta},
\end{split}
\end{equation*}
where the Sobolev embedding inequality
$ \dot{H}^{\frac d2 - \frac dp} \subset L^p$ for $p \ge 2$ was used in the last inequality and we take sufficiently small $\delta > 0$ for $d=2$ and $\delta = 1$ for $d=3$.
Therefore,
\begin{equation*}
\begin{split}
\vert P \big( & 3 (Y_t^{M,N})^2 \mathcal{O}_t^{M,N}
        + 3 Y_t^{M,N} (\mathcal{O}_t^{M,N})^2 + (\mathcal{O}_t^{M,N})^3
        - \mathcal{O}_t^{M,N} \big) \vert_1
  \\ & \leq C \big( \vert (Y_t^{M,N})^2 \vert_1 \| \mathcal{O}_t^{M,N} \|_V
         + \| ( Y_t^{M,N} )^2 \nabla \mathcal{O}_t^{M,N} \|
         + \| \nabla Y_t^{M,N} \| \| \mathcal{O}_t^{M,N} \|_V^2
  \\ & \quad + \| Y_t^{M,N} \|_V \| \mathcal{O}_t^{M,N} \|_V
         \| \nabla \mathcal{O}_t^{M,N} \|
         +  \| \nabla \mathcal{O}_t^{M,N} \|
              \| \mathcal{O}_t^{M,N} \|_V^2
         +  \| \nabla \mathcal{O}_t^{M,N} \| \big)
  \\ & \leq C \big( 1 + \vert (Y_t^{M,N})^2 \vert_1 + \| A  Y_t^{M,N} \| \big)
         \big( 1 + \vert \mathcal{O}_t^{M,N} \vert_{\theta}^3 \big).
\end{split}
\end{equation*}
Combining it with the fact $\Phi'(t)=F(t)$ and Cauchy--Schwartz inequality infers that for $\theta > \tfrac d2$,
\begin{align*}
\begin{split}
- & \langle F( Y_t^{M,N} + \mathcal{O}_t^{M,N}), \dot{Y}_t^{M,N} \rangle
    + \langle Z_t^{M,N}, \dot{Y}_t^{M,N} \rangle
  \\ & = - \langle  F( Y_t^{M,N} ), \dot{Y}_t^{M,N} \rangle
         + \langle  Z_t^{M,N}, \dot{Y}_t^{M,N} \rangle
  \\ & \quad - \langle 3 ( Y_t^{M,N} )^2 \mathcal{O}_t^{M,N}
        + 3 Y_t^{M,N} (\mathcal{O}_t^{M,N})^2 + (\mathcal{O}_t^{M,N})^3
          - \mathcal{O}_t^{M,N},\dot{Y}_t^{M,N} \rangle
  \\ & \leq - \tfrac{\dd }{\dd t} \int_{\mathcal{D}} \Phi(Y_t^{M,N}) \dd x
            + \vert Z_t^{M,N} \vert_1 \vert \dot{Y}_t^{M,N} \vert_{-1}
  \\ & \quad + \vert P \big( 3 (Y_t^{M,N})^2 \mathcal{O}_t^{M,N}
        + 3 Y_t^{M,N} (\mathcal{O}_t^{M,N})^2 + (\mathcal{O}_t^{M,N})^3
        - \mathcal{O}_t^{M,N} \big) \vert_1 \vert \dot{Y}_t^{M,N} \vert_{-1}
  \\ & \leq - \tfrac{\dd }{\dd t} \int_{\mathcal{D}} \Phi(Y_t^{M,N}) \dd x
        + \vert \dot{Y}_t^{M,N} \vert_{-1}^2 + \tfrac 12 \vert Z_t^{M,N} \vert_1^2
  \\ & \quad + C
  \big( 1 + \vert (Y_t^{M,N})^2 \vert_1^2 + \| A Y_t^{M,N} \|^2 \big)
 \big( 1 + \vert \mathcal{O}_t^{M,N} \vert_{\theta}^6 \big).
\end{split}
\end{align*}
Therefore,
\begin{align*}
\begin{split}
J(Y_t^{M,N}) & \leq J( Y_0^{M,N} ) + C \int_0^t \vert Z_s^{M,N} \vert_1^2 \dd s
   \\ & \quad + C \Big( 1 + \int_0^t \big( \vert (Y_s^{M,N})^2 \vert_1^2
                  + \| A Y_s^{M,N} \|^2 \big) \dd s \Big)
      ( 1 + \sup_{s\in[0,T]} \vert \mathcal{O}_s^{M,N} \vert_{\theta}^6 ).
\end{split}
\end{align*}
Applying \eqref{eq:moment-1-A} and Lemma \ref{lem:discrete-stochastic-convolution} infers that
\begin{align*}
\E [ ( \chi_{\Omega_{R_{\tau},t_{i-1}}} J( Y_{t_i}^{M,N} ) )^p ]
   \leq C \Big( 1 + \E \Big[ \int_0^{t_i} \chi_{\Omega_{R_{\tau},t_{i-1}}}
       \vert Z_s^{M,N} \vert_1^2  \dd s \Big]^p \Big).
\end{align*}
Further, we adapt similar arguments used in the proof of \eqref{estimate-XSMN} to get for $s \in [0,t_i)$ and $\kappa \in (\tfrac d2,\text{min}\{\gamma,2\})$,
\begin{equation*}
\begin{split}
\|& \chi_{\Omega_{R_{\tau},t_{i-1}}} X_s^{M,N}
     \|_{L^{p}(\Omega,\dot{H}^\kappa)}
  \\ & \quad \leq C \Big(
        \| X_0\|_{L^{p}(\Omega,\dot{H}^\kappa)}
  + \| \mathcal{O}_s^{M,N} \|_{L^{p}(\Omega,\dot{H}^\kappa)}
  \\ & \qquad + \int_0^s  ( s - r )^{- \frac {2+\kappa}4}
         \Big\| \chi_{ \Omega_{R_{\tau},t_{i-1}} }\| P F ( X_{\lfloor {r} \rfloor_{\tau}}^{M,N}) \| \Big\|_{L^{p}(\Omega,\mathbb{R})} \dd r \Big)
  \\ & \quad \leq C ( 1  + R_{\tau}^3 ).
\end{split}
\end{equation*}
Using \eqref{eq:F'-condition} and the Sobolev embedding inequality $\dot{H}^{\kappa} \subset V$ yields
\begin{equation*}
\begin{split}
\| \chi_{\Omega_{R_{\tau},t_{i-1}}}
  F ( X_{\lfloor {s} \rfloor_{\tau}}^{M,N} )
     \big\|_{L^{p}(\Omega,\dot{H}^1)}
   & = \| \chi_{\Omega_{R_{\tau},t_{i-1}}}
   F ' ( X_{\lfloor {s} \rfloor_{\tau}}^{M,N} )
    \nabla X_{\lfloor {s} \rfloor_{\tau}}^{M,N}
     \big\|_{L^{p}(\Omega,\dot{H})}
   \\ & \leq C \big( 1 + \| \chi_{\Omega_{R_{\tau},t_{i-1}}}
          X_{\lfloor {s} \rfloor_{\tau}}^{M,N}
     \|_{L^{p}(\Omega,\dot{H}^\kappa)}^3 \big)
   \\ & \leq C( 1 + R_{\tau}^9 ).
\end{split}
\end{equation*}
Similarly, employing  \eqref{I-spatial-temporal-S(t)}, \eqref{IV-spatial-temporal-S(t)} and Assumption \ref{assum:intial-value-data} yields
\begin{equation*}
\begin{split}
& \| \chi_{\Omega_{R_{\tau},t_{i-1}}}
   ( X_s^{M,N} - X_{\lfloor {s} \rfloor_{\tau}}^{M,N} )
     \big\|_{L^{p}(\Omega,\dot{H}^1)}
\\ &  \leq C \Big( \tau^{ \frac{\gamma-1}{4} }
   \| X_0 \|_{L^{p}(\Omega,\dot{H}^\gamma)}
    + \tau^{\text{min}\{\frac12,\frac{\gamma-1}4\}}
 \\ & \quad  + \left\| \chi_{\Omega_{R_{\tau},t_{i-1}}} \Big\|
       \int_0^{\lfloor {s} \rfloor_{\tau}} \!\!
         \big( E( s - u ) - E( \lfloor {s} \rfloor_{\tau} - u ) \big)
   \tfrac { A^{\frac32} P_N F (X_{\lfloor {u} \rfloor_{\tau}}^{M,N}) }
         { 1 + \tau \| P_N F (X_{\lfloor {u} \rfloor_{\tau}}^{M,N}) \| } \dd u   \Big\| \right\|_{L^{p}(\Omega,\mathbb{R})}
\\ & \quad +  \left\| \chi_{\Omega_{R_{\tau},t_{i-1}}}
       \int_{\lfloor {s} \rfloor_{\tau}}^s \Big\|
     \tfrac { E(s- u) A^{\frac32} P_N F (X_{\lfloor {u} \rfloor_{\tau}}^{M,N}) }
        { 1 + \tau \|P_N F (X_{\lfloor {u} \rfloor_{\tau}}^{M,N}) \|}
         \Big\|  \dd u \right\|_{L^{p}(\Omega,\mathbb{R})} \Big)
\\ &  \leq C \tau^{\text{min}\{\frac12,\frac{\gamma-1}4\}}
    + C \big( 1 + R_{\tau}^3 \big ) (\tau^{\frac 16}+\tau^{\frac 14})
\\ & \leq C \, \tau^{\min \{ \frac 16, \frac {\gamma-1}4 \}}( 1 + R_{\tau}^3).
\end{split}
\end{equation*}
Finally, it suffices to bound
$ \E \Big[ \chi_{\Omega_{R_{\tau},t_{i-1}}}
   \vert Z_s^{M,N} \vert_1^{p} \Big] $
for $ s \in [0,t_i] $.
Taking $R_\tau = \tau^{- \min \{ \frac 1{54}, \frac {\gamma-1}{36}\}}$ yields
\begin{equation*}
\begin{split}
\| & \chi_{ \Omega_{R_{\tau},t_{i-1}} } Z_s^{M,N} \|_{L^p(\Omega,\dot{H}^1)}
     \leq \big\| \chi_{\Omega_{R_{\tau},t_{i-1}}}
        ( F (X_s^{M,N}) - F (X_{\lfloor {s} \rfloor_{\tau}}^{M,N}) )
            \big\|_{L^p(\Omega,\dot{H}^1)}
 \\ & \quad + \Big\| \chi_{\Omega_{R_{\tau},t_{i-1}}}
   \big( F (X_{\lfloor {s} \rfloor_{\tau}}^{M,N})
     - \tfrac{ F (X_{\lfloor {s} \rfloor_{\tau}}^{M,N}) }
     { 1 + \tau \| P_N F (X_{\lfloor {s} \rfloor_{\tau}}^{M,N}) \|}
    \big) \Big\|_{L^p(\Omega,\dot{H}^1)}
 \\ & \leq C \big\| \chi_{\Omega_{R_{\tau},t_{i-1}}}
   ( X_s^{M,N} - X_{\lfloor {s} \rfloor_{\tau}}^{M,N} )
         \big\|_{L^{2p}(\Omega,\dot{H}^1)}
     \Big( 1 + \| \chi_{\Omega_{R_{\tau},t_{i-1}}}
     X_{\lfloor {s} \rfloor_{\tau}}^{M,N} \|_{L^{4p}(\Omega,\dot{H}^\kappa)}^2
 \\ & \quad  + \| \chi_{\Omega_{R_{\tau},t_{i-1}}} X_s^{M,N}
            \|_{L^{4p}(\Omega,\dot{H}^\kappa)}^2 \big)
 \\ & \quad   + C \tau \| \chi_{\Omega_{R_{\tau},t_{i-1}}} \!\!
        F (X_{\lfloor {s} \rfloor_{\tau}}^{M,N}) \|_{L^{2p}(\Omega,\dot{H}^1)}
        \| \chi_{\Omega_{R_{\tau},t_{i-1}}}       \!\!
        F (X_{\lfloor {s} \rfloor_{\tau}}^{M,N}) \|_{L^{2p}(\Omega,\dot{H})}
 \\ & \leq C \, \tau^{\min \{ \frac 16, \frac {\gamma-1}4 \}}
       ( 1 + R_{\tau}^9) + C \, \tau ( 1 + R_{\tau}^{12})
 \\ & < \infty.
\end{split}
\end{equation*}
The above estimates in combination with the fact
$\vert Y_{t_i}^{M,N} \vert_1^2 \leq 2 J(Y_{t_i}^{M,N}) $ yield
\begin{equation}\label{eq:moment-H1-23d}
\|  \chi_{ \Omega_{R_{\tau},t_{i-1}} } Y_{t_i}^{M,N} \|_{L^p(\Omega,\dot{H}^1)} < \infty,\,\,d=2,3.
\end{equation}
Gathering \eqref{eq:moment-L6-1d}, \eqref{eq:moment-H1-23d}, Sobolev embedding inequality $\dot{H}^1 \subset L^6,d=2,3$ and Lemma \ref{lem:discrete-stochastic-convolution} together completes the proof.
\end{proof}

By adopting similar arguments in
\cite[Theorem 4.6]{Wang2020efficient},
 we can obtain a priori moment bound of
 $\| X_{t_m}^{M,N} \|_{L^6}$ via Markov's inequality.

\begin{theorem}\label{full;a priori estimate}
Under Assumptions \ref{assum:linear-operator-A}--\ref{assum:intial-value-data}, it holds that for any $p \geq 1$,
\begin{equation*}
\sup_{M,N \in \mathbb N^{+}} \sup_{ i \in \{ 0,1,\ldots,M \} }
    \E \Big[ \|X_{t_i}^{M,N}\|_{L^6}^p \Big] < \infty.
\end{equation*}
\end{theorem}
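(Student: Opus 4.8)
The plan is to combine the conditional moment bound of Lemma~\ref{indication;a priori} with a first-exit-time decomposition and Markov's inequality, in the spirit of \cite[Theorem 4.6]{Wang2020efficient}. For fixed $M,N\in\N^{+}$ and $i\in\{0,1,\ldots,M\}$ I would split
\[
\E\big[\|X_{t_i}^{M,N}\|_{L^6}^p\big]
= \E\big[\chi_{\Omega_{R_{\tau},t_{i-1}}}\|X_{t_i}^{M,N}\|_{L^6}^p\big]
+ \E\big[\chi_{\Omega_{R_{\tau},t_{i-1}}^c}\|X_{t_i}^{M,N}\|_{L^6}^p\big].
\]
The first term is bounded uniformly in $M,N,i$ directly by Lemma~\ref{indication;a priori}, so the entire task reduces to controlling the contribution of the \emph{bad} event $\Omega_{R_{\tau},t_{i-1}}^c$, on which $\|X_{t_j}^{M,N}\|_{L^6}$ has exceeded the threshold $R_{\tau}=\tau^{-a}$ at some earlier node. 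The decisive point is that this event has probability decaying faster than any power of $\tau$, which will defeat a crude, polynomially growing-in-$1/\tau$ moment bound.

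To estimate $\P(\Omega_{R_{\tau},t_{i-1}}^c)$, I would decompose its complement according to the first node $j\in\{0,1,\ldots,i-1\}$ at which $\|X_{t_j}^{M,N}\|_{L^6}>R_{\tau}$. These events are disjoint, their union is $\Omega_{R_{\tau},t_{i-1}}^c$, and on the $j$-th one the indicator $\chi_{\Omega_{R_{\tau},t_{j-1}}}$ equals one; hence Markov's inequality of arbitrary order $q$ combined with Lemma~\ref{indication;a priori} gives
\[
\P\big(\|X_{t_j}^{M,N}\|_{L^6}>R_{\tau},\ \chi_{\Omega_{R_{\tau},t_{j-1}}}=1\big)
\leq R_{\tau}^{-q}\,\E\big[\chi_{\Omega_{R_{\tau},t_{j-1}}}\|X_{t_j}^{M,N}\|_{L^6}^q\big]
\leq C_q\,R_{\tau}^{-q}.
\]
Summing over the at most $M=T/\tau$ nodes and recalling that $R_{\tau}=\tau^{-a}$ with $a=\min\{\tfrac{4}{81},\tfrac{\gamma}{24}\}>0$ for $d=1$ (and the analogous strictly positive exponent $a=\min\{\tfrac{1}{54},\tfrac{\gamma-1}{36}\}$ for $d=2,3$, which is positive since $\gamma>\tfrac d2\geq1$) yields $\P(\Omega_{R_{\tau},t_{i-1}}^c)\leq C_q\,\tau^{aq-1}$; as $q$ is free, this is smaller than any fixed power of $\tau$.

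It then remains to supply a crude bound $\E[\|X_{t_i}^{M,N}\|_{L^6}^{2p}]\leq C\tau^{-r}$ for some $r>0$, uniformly in $N$. I would read this off the continuous mild form \eqref{full;continuous version}, estimating the drift in $\dot{H}^1\subset L^6$: by \eqref{I-spatial-temporal-S(t)} with $\mu=\tfrac32$ and the elementary taming inequality $\tfrac{\|P_N F(v)\|}{1+\tau\|P_N F(v)\|}\leq\tau^{-1}$, the drift integral is controlled by $C\tau^{-1}\int_0^{t_i}(t_i-s)^{-3/4}\,\dd s\leq C\tau^{-1}$, while the initial and stochastic-convolution terms are handled by Assumption~\ref{assum:intial-value-data} and Lemma~\ref{lem:discrete-stochastic-convolution} through $\dot{H}^\theta\subset L^6$ for a suitable $\theta<\min\{\gamma,2\}$; all bounds are uniform in $N$ since $\|P_N\|_{\mathcal{L}(\dot{H})}\leq1$. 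Combining via Cauchy--Schwarz,
\[
\E\big[\chi_{\Omega_{R_{\tau},t_{i-1}}^c}\|X_{t_i}^{M,N}\|_{L^6}^p\big]
\leq \P(\Omega_{R_{\tau},t_{i-1}}^c)^{1/2}\,\big(\E[\|X_{t_i}^{M,N}\|_{L^6}^{2p}]\big)^{1/2}
\leq C\,\tau^{(aq-1)/2-r/2},
\]
and choosing $q$ large enough that $(aq-1)/2\geq r/2$ renders this uniformly bounded. The main obstacle is the interplay of the two competing scales: one must ensure the threshold exponent $a$ built into $R_{\tau}$ in Lemma~\ref{indication;a priori} is strictly positive while the crude moment grows only polynomially in $1/\tau$, so that raising the Markov order $q$ always overwhelms the polynomial blow-up. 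The taming is precisely what guarantees the latter, and verifying compatibility of these exponents is the delicate part of the argument.
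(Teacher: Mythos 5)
Your proposal is correct and follows essentially the same route as the paper's proof: the conditional bound of Lemma~\ref{indication;a priori}, the first-exit decomposition of the bad event $\Omega_{R_\tau,t_{i-1}}^c$ into nodes where $\chi_{\Omega_{R_\tau,t_{j-1}}}\chi_{\{\|X_{t_j}^{M,N}\|_{L^6}>R_\tau\}}=1$, the crude taming-based bound $\E[\|X_{t_i}^{M,N}\|_{L^6}^{2p}]\leq C\tau^{-2p}$ from the mild form with $\|A^{3/2}E(t_i-s)\|_{\mathcal{L}(\dot H)}\leq C(t_i-s)^{-3/4}$, and a high-order Markov inequality combined with Cauchy--Schwarz. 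The only (immaterial) difference is organizational: you bound $\P(\Omega_{R_\tau,t_{i-1}}^c)$ first with a free Markov order $q$ chosen at the end, whereas the paper applies Cauchy--Schwarz and Markov term-by-term inside the sum with the tailored order $2(p+1)/\varrho(\gamma)$.
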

\begin{proof}
By virtue of Lemma \ref{indication;a priori} and the fact that $\Omega_{R_{\tau},t_{i}} \subset \Omega_{R_{\tau},t_{i-1}}$,
it suffices to bound
\begin{equation*}
\sup_{M,N \in \mathbb N^{+}}
\sup_{i \in \{ 0,1,\ldots,M \}}
  \E \Big[ \chi_{\Omega_{R_{\tau},t_{i}}^c}
  \| X_{t_i}^{M,N} \|_{L^6}^p \Big]
\end{equation*}
The case $i=0$ is trivial and we only  consider
$i \in \{1,\ldots,M \}$.
Following a standard argument and using the Sobolev embedding inequality $\dot{H}^1 \subset L^6$ yield
\begin{equation*}
\begin{split}
\|X_{t_i}^{M,N}\|_{L^6}
  &  \leq \|E(t_i)P_N X_0\|_{L^6}
   +\Big\| \int_0^{t_i}
      E(t_i-\lfloor {s} \rfloor_{\tau}) \dd W(s) \Big\|_{L^6}
 \\ & \quad + \Big \|
    \int_0^{t_i} \tfrac{E(t_i-s) A P_N
      F(X_{\lfloor s \rfloor_{\tau}}^{M,N})}
      {1+\tau\|P_N F(X_{\lfloor s \rfloor_{\tau}}^{M,N})\|}
       \dd s \Big\|_{L^6}
 \\ & \leq C \Big( \|X_0\|_{L^6} + \|\mathcal{O}_{t_i}^{M,N}\|_{L^6}
 \\ & \quad + \tfrac{1}{\tau} \int_0^{t_i}
    \|A^{\frac32} E(t_i - s)\|_{\mathcal{L}(H)}
      \tfrac{\tau\|P_N F(X_{\lfloor s\rfloor_{\tau}}^{M,N})\|}
       {1+\tau\|P_N F(X_{\lfloor s\rfloor_{\tau}}^{M,N})\|} \dd s \Big)
\\ & \leq  C \Big( \|X_0\|_{L^6} + \|\mathcal{O}_{t_i}^{M,N}\|_{L^6}
 + \frac{1}{\tau} \int_0^{t_i} (t_i - s)^{-\frac 34} \dd s \Big)
\\ & \leq C \Big( \|X_0\|_{L^6} + \|\mathcal{O}_{t_i}^{M,N}\|_{L^6} + \tau^{-1} \Big).
\end{split}
\end{equation*}
Thanks to Lemma \ref{lem:discrete-stochastic-convolution} and Assumption \ref{assum:intial-value-data}, we have for $p \ge 2$,
\begin{equation}\label{eq:XtiMN}
\|X_{t_i}^{M,N}\|_{L^p(\Omega,L^6)} \le C( 1 + \tau^{-1} ),\,
i \in \{0,1,\ldots,M\}.
\end{equation}
Note that
\begin{equation*}
\Omega_{R_{\tau},t_i}^c
  = \Omega_{R_{\tau},t_{i-1}}^c
    \cup \Big(\Omega_{R_{\tau},t_{i-1}} \cap
      \Big\{ \omega \in \Omega:\|X_{t_i}^{M,N}\|_{L^6}>R_{\tau}\Big\} \Big).
\end{equation*}
Meanwhile, we recall
$\chi_{\Omega_{R_{\tau},t_{-1}}^c}=0$
and then derive
\begin{equation*}
\begin{split}
\chi_{\Omega_{R_{\tau},t_i}^c}
 & =
\chi_{\Omega_{R_{\tau},t_{i-1}}^c}
+
\chi_{\Omega_{R_{\tau},t_{i-1}}}
\cdot \chi_{\{\|X_{t_i}^{M,N}\|_{L^6}>R_{\tau}\}}
\\ &=
\sum_{j=0}^i
\chi_{\Omega_{R_{\tau},t_{j-1}}}
\cdot
\chi_{\{\|X_{t_j}^{M,N}\|_{L^6}>R_{\tau}\}}.
\end{split}
\end{equation*}
Combining \eqref{eq:XtiMN} with Markov's inequality and H\"{o}lder's inequality shows that
\begin{align*}
\begin{split}
\E \Big[ & \chi_{\Omega_{R_{\tau},t_i}^c}
   \| X_{t_i}^{M,N} \|_{L^6}^p \Big]
  \\ & = \sum_{j=0}^i \E \Big[
    \|X_{t_i}^{M,N}\|_{L^6}^p \cdot
     \chi_{\Omega_{R_{\tau},t_{j-1}}} \cdot
      \chi_{\{\|X_{t_j}^{M,N}\|_{L^6}>R_{\tau}\}} \Big]
  \\ & \leq \sum_{j=0}^i \Big(
   \E  \Big[ \|X_{t_i}^{M,N}\|_{L^6}^{2p} \Big] \Big)^{\frac12} \cdot
     \Big( \E \Big[ \chi_{\Omega_{R_{\tau},t_{j-1}}} \cdot
    \chi_{\{\|X_{t_j}^{M,N}\|_{L^6}>R_{\tau}\}} \Big] \Big)^{\frac12}
  \\ & \leq \sum_{j=0}^i C( 1 + \tau^{-p} ) \cdot
     \Big( \mathbb{P} \Big[ \chi_{\Omega_{R_{\tau},t_{j-1}}}
      \|X_{t_j}^{M,N}\|_{L^6} > R_{\tau} \Big] \Big)^{\frac12}
  \\ & \leq C( 1 + \tau^{-p} ) \sum_{j=0}^i \Big(\E \Big[
    \chi_{\Omega_{R_{\tau},t_{j-1}}}
\|X_{t_j}^{M,N}\|_{L^6}^{\tfrac{2(p+1)}{\varrho(\gamma)}}/
(R_{\tau})^{\tfrac{2(p+1)}{\varrho(\gamma)}} \Big] \Big)^{\frac12}
  \\  & \leq C( 1 + \tau^{-p} )
    \sum_{j=0}^i \tau^{p+1} \Big( \E
     \Big[ \chi_{\Omega_{R_{\tau},t_{j-1}}}
      \|X_{t_j}^{M,N}\|_{L^6}^{\frac{2(p+1)}{\varrho(\gamma)}} \Big]
    \Big)^{\frac12} < \infty,
\end{split}
\end{align*}
where $\varrho(\gamma)= \min \{  \frac 4{81}, \frac {\gamma}{24} \}$
 for $d=1$ and
$\varrho(\gamma) =  \min \{ \frac 1{54}, \frac {\gamma-1}{36}\}$ for $d=2,3$.
The proof is now completed.
\end{proof}

With Theorem \ref{full;a priori estimate} at hand,
it is trivial to verify the regularity of $X_t^{M,N}$ in the next corollaries.

\begin{corollary}\label{X_t;gamma bound}
Let Assumptions \ref{assum:linear-operator-A}--\ref{assum:intial-value-data} be fulfilled. Then for any $p \geq 1$, we have
\begin{equation*}
\sup_{M,N \in \N^{+}} \sup_{t\in[0,T]}
 \E \big[ \vert X_t^{M,N} \vert_{\gamma}^p \big] < \infty.
\end{equation*}
\end{corollary}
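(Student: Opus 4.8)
The plan is to estimate the continuous mild representation \eqref{full;continuous version} termwise in $L^{p}(\Omega,\dot{H}^{\gamma})$, uniformly in $M,N$ and $t$, and to close the bound by feeding in the already established $L^{6}$ a priori moment bound of Theorem \ref{full;a priori estimate}. The initial term is immediate: since $P_N$ and $E(t)$ commute with $A$ and are contractions on $\dot{H}$, one has $\vert E(t)P_N X_0\vert_{\gamma}\le \vert X_0\vert_{\gamma}$, whose $p$-th moment is finite by Assumption \ref{assum:intial-value-data}. For the discrete stochastic convolution $\mathcal{O}_t^{M,N}$ I would argue exactly as in the continuous case treated at the end of Lemma \ref{lem:stochastic-convolution}: by the Burkholder--Davis--Gundy inequality, factor $A^{\frac\gamma2}E(t-\lfloor s\rfloor_{\tau})P_N=E(s-\lfloor s\rfloor_{\tau})\,AE(t-s)\,A^{\frac{\gamma-2}2}P_N$, exploit that $E(s-\lfloor s\rfloor_{\tau})P_N$ is a contraction commuting with $A$, and invoke \eqref{III-spatial-temporal-S(t)} with $\varrho=1$ to obtain $\int_0^t\|A^{\frac\gamma2}E(t-\lfloor s\rfloor_{\tau})P_N Q^{\frac12}\|_{\mathcal{L}_2}^2\,\dd s\le C\|A^{\frac{\gamma-2}2}Q^{\frac12}\|_{\mathcal{L}_2}^2$, which is finite by \eqref{eq:ass-AQ-condition} and uniform in $M,N,t$.

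The crux is the drift term $D_t:=\int_0^t AE(t-s)P_N\frac{F(X_{\lfloor s\rfloor_{\tau}}^{M,N})}{1+\tau\|P_N F(X_{\lfloor s\rfloor_{\tau}}^{M,N})\|}\,\dd s$. Because the a priori bound now gives \emph{genuine} (not merely $\tau^{-1}$) uniform control, the taming denominator is harmless: $\bigl\vert\tfrac{F}{1+\tau\|P_NF\|}\bigr\vert_{\beta}\le\vert F\vert_{\beta}$. Transferring $\beta$ derivatives onto $F$ via $A^{\frac\gamma2}\!\cdot\! A=A^{\frac{\gamma-\beta}2+1}A^{\frac\beta2}$ and using \eqref{I-spatial-temporal-S(t)} yields
\[
\vert D_t\vert_{\gamma}\le C\int_0^t (t-s)^{-\frac{\gamma-\beta+2}{4}}\,\bigl\vert F(X_{\lfloor s\rfloor_{\tau}}^{M,N})\bigr\vert_{\beta}\,\dd s ,
\]
which is integrable precisely when $\beta>\gamma-2$. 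Thus I would run a short bootstrap. Taking $\beta=0$ and $\vert F(\cdot)\vert_{0}=\|F(\cdot)\|\le C(1+\|\cdot\|_{L^6}^3)$ (finite by Theorem \ref{full;a priori estimate}) already gives a uniform $\dot{H}^{\alpha_0}$ bound for any $\alpha_0\in(\tfrac d2,\min\{\gamma,2\})$; in particular $X_t^{M,N}\in V$. Since $H^{\alpha_0}$ is an algebra for $\alpha_0>\tfrac d2$ (cf.\ \eqref{eq:norm-algebra}) and Lemma \ref{lem:H-1} controls $F'$, this $\dot{H}^{\alpha_0}$ regularity upgrades to a uniform bound on $\vert F(X_{\lfloor s\rfloor_{\tau}}^{M,N})\vert_{\alpha_0}$, and re-inserting $\beta=\alpha_0$ (admissible whenever $\alpha_0>\gamma-2$) raises the regularity to $\dot{H}^{\gamma}$. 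For $\gamma\in(\tfrac d2,2]$ this closes in essentially one step, and for $\gamma\in(2,4)$ one chooses $\alpha_0\in(\gamma-2,2)$ to close after a second iteration, using the $H^2$-algebra bound $\vert F(X)\vert_{2}\le C(1+\vert X\vert_2^3)$ at the top.

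The main obstacle is this drift estimate for large $\gamma$: the semigroup smoothing alone produces the singularity $(t-s)^{-\frac{\gamma+2}{4}}$, which is non-integrable once $\gamma\ge2$, so one is forced to extract spatial regularity from $F(X^{M,N})$ itself, and the only way to do so uniformly in $N$ is the bootstrap above resting on Theorem \ref{full;a priori estimate}, Lemma \ref{lem:H-1} and \eqref{eq:norm-algebra}. The genuinely delicate point is the endpoint $\gamma=4$, where $\beta>\gamma-2=2$ is required and one must push to $\dot{H}^{4}=\mathrm{dom}(A^2)$ with the compatible Neumann-type boundary conditions; there the algebra and $F$-regularity estimates must be applied at their limit, and this is where the ``trivial'' verification in fact carries all the weight.
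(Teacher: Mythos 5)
Your proposal is correct and takes essentially the same route as the paper: the same decomposition into initial term, tamed drift and discrete stochastic convolution, the same Burkholder--Davis--Gundy/factorization bound for the latter via \eqref{III-spatial-temporal-S(t)} and \eqref{eq:ass-AQ-condition}, and the same bootstrap for the drift that trades the semigroup smoothing \eqref{I-spatial-temporal-S(t)} against increasing spatial regularity of $F(X^{M,N}_{\lfloor s\rfloor_\tau})$ anchored by Theorem \ref{full;a priori estimate} --- the paper merely runs the bootstrap in integer stages ($\|PF\|$, $\vert PF\vert_1$, $\vert PF\vert_2$ for $\gamma\in[1,\delta]$, $(\delta,3)$, $[3,4)$) instead of your fractional $\dot{H}^{\alpha_0}$ ladder. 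One warning about your endpoint remark: for $\gamma=4$ the nonlinearity only needs to land in $\dot{H}^{2+2\epsilon}$ for small $\epsilon>0$, where the single Neumann condition is preserved because $\partial F(X)/\partial n=F'(X)\,\partial X/\partial n=0$, and emphatically \emph{not} in $\dot{H}^4=\mathrm{dom}(A^2)$, whose second boundary condition ($\partial (\Delta F(X))/\partial n=0$) generically fails for $F(X)$ even when $X\in\dot{H}^4$; the paper closes this endpoint in the same spirit as your lower-order steps, using boundedness of $X^{M,N}_t$ in $\dot{H}^{\frac d2+2+\epsilon}$ (available from the case $\gamma\in[3,4)$) together with Sobolev embedding, so your bootstrap should be phrased with $\beta=2+2\epsilon$ rather than with any appeal to $\mathrm{dom}(A^2)$.
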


\begin{proof}
It follows from the
Burkholder--Davis--Gundy--type inequality, \eqref{III-spatial-temporal-S(t)} and \eqref{eq:ass-AQ-condition} that
\begin{equation*}
\begin{split}
\Big\| \int_0^t & E (t-\lfloor {s} \rfloor_{\tau}) P_N \dd W(s)
     \Big\|_{L^p(\Omega,\dot{H}^{\gamma})}
  \\ & \leq  C \Big(\int_0^t \big\| A^{\frac \gamma 2} E (t-\lfloor {s} \rfloor_{\tau}) \big\|_{\mathcal{L}_2^0}^2  \dd s
   \Big)^{\frac 12}
   \\ & = C \Big(\int_0^t \big\| A E (t-\lfloor {s} \rfloor_{\tau})
    A^{\frac {\gamma-2} 2} \big\|_{\mathcal{L}_2^0}^2  \dd s
   \Big)^{\frac 12}
   \\ & \leq C \| A^{\frac {\gamma-2} 2}
               Q^{\frac 12} \|_{\mathcal{L}_2} < \infty.
\end{split}
\end{equation*}
This together with Assumption \ref{assum:intial-value-data} yields that
\begin{equation*}
\begin{split}
\| &
X_t^{M,N}
\|_{L^p(\Omega,\dot{H}^{\gamma})}
\\ &
\leq
\|
E (t)X_0^{M,N}
\|_{L^p(\Omega,\dot{H}^{\gamma})}
+
\Big\|
\int_0^t
\tfrac{E (t-s)A  P_N F (X_{\lfloor {s} \rfloor_{\tau}}^{M,N})}
{1+\tau \| P_N F (X_{\lfloor {s} \rfloor_{\tau}}^{M,N}) \|}
\dd s
\Big\|_{L^p(\Omega,\dot{H}^{\gamma})}
\\
&
\quad
+
\Big\|
\int_0^t
E (t-\lfloor {s} \rfloor_{\tau})
P_N \dd W(s)
\Big\|_{L^p(\Omega,\dot{H}^{\gamma})}
\\
&
\leq
C
+
\Big\|
\int_0^t
\tfrac{E (t-s)A  P_N F (X_{\lfloor {s} \rfloor_{\tau}}^{M,N})}
{1+\tau \| P_N F (X_{\lfloor {s} \rfloor_{\tau}}^{M,N}) \|}
\dd s
\Big\|_{L^p(\Omega,\dot{H}^{\gamma})}.
\end{split}
\end{equation*}
Further, by taking any fixed number $\delta\in(\tfrac32,2)$,
we first consider the case $\gamma\in[1,\delta]$,
\begin{equation*}
\begin{split}
\Big\| & \int_0^t
  \tfrac{E (t-s)A  P_N F (X_{\lfloor {s} \rfloor_{\tau}}^{M,N})}
{1+\tau \| P_N F (X_{\lfloor {s} \rfloor_{\tau}}^{M,N}) \|} \dd s
   \Big\|_{L^p(\Omega,\dot{H}^{\gamma})}
  \\  & \leq \int_0^t \| E(t-s) A^{\frac{\gamma+2}{2}}
           P_N F (X_{\lfloor {s} \rfloor_{\tau}}^{M,N})
               \|_{L^p(\Omega,\dot{H})}  \dd s
  \\  & \leq C \int_0^t
      (t-s)^{-\frac{\gamma+2}{4}} \dd s
    \sup_{i \in \{0,1,\cdots,M\}} \|PF(X_{t_i}^{M,N})\|_{L^p(\Omega,\dot{H})}
  \\ & \leq C \Big( 1 + \sup_{i \in \{0,1,\cdots,M\}}
    \|X_{t_i}^{M,N}\|_{L^{3p}(\Omega,L^6)}^3 \Big) < \infty,
\end{split}
\end{equation*}
where \eqref{I-spatial-temporal-S(t)} and Theorem \ref{full;a priori estimate} were used.
Next, we turn to the  case $\gamma \in (\delta,3)$.
By using the similar approach and the Sobolev embedding inequality $\dot{H}^{\delta} \subset V$, we obtain
\begin{equation}\label{estimate:delta-3}
\begin{split}
\Big\| &
\int_0^t
\tfrac{E (t-s)A  P_N F (X_{\lfloor {s} \rfloor_{\tau}}^{M,N})}
{1+\tau \| P_N F (X_{\lfloor {s} \rfloor_{\tau}}^{M,N}) \|}
\dd s
\Big\|_{L^p(\Omega,\dot{H}^{\gamma})}
\\&
\leq \int_0^t \| E(t-s) A^{\frac{\gamma+1}{2}} A^{\frac12}
 P_N F (X_{\lfloor {s} \rfloor_{\tau}}^{M,N})
 \|_{L^p(\Omega,\dot{H})} \dd s
\\ & \leq C
 \int_0^t (t-s)^{-\frac{\gamma+1}{4}} \dd s
 \sup_{t \in [0,T]} \|PF(X_t^{M,N})\|_{L^p(\Omega,\dot{H}^1)}
\\ & \leq C \Big( 1 +  \sup_{t\in[0,T]}
   \|X_t^{M,N}\|_{L^{3p}(\Omega,\dot{H}^{\delta})}^3 \Big) < \infty.
\end{split}
\end{equation}
Before proceeding further, one uses \eqref{eq:norm-algebra} to derive
\begin{equation*}
\begin{split}
\sup_{t\in[0,T]}
\|PF(X_t^{M,N})\|_{L^p(\Omega,\dot{H}^{2})}
&\leq
C
\sup_{t\in[0,T]}
\|PF(X_t^{M,N})\|_{L^p(\Omega,H^{2}(\mathcal D))}
\\
&
\leq
C
\Big(
1
+
\sup_{t\in[0,T]}
\|X_t^{M,N}\|_{L^{3p}(\Omega,\dot{H}^{2})}^3
\Big)
\\
&
<\infty
\end{split}
\end{equation*}
Bearing this in mind and repeating the same lines of \eqref{estimate:delta-3}
we can prove for $\gamma \in [3,4)$,
\begin{equation*}
\begin{split}
\Big\| &
\int_0^t
\tfrac{E (t-s)A  P_N F (X_{\lfloor {s} \rfloor_{\tau}}^{M,N})}
{1+\tau \| P_N F (X_{\lfloor {s} \rfloor_{\tau}}^{M,N}) \|}
\dd s
\Big\|_{L^p(\Omega,\dot{H}^{\gamma})}
\\&
\leq \int_0^t \| E(t-s) A^{\frac{\gamma}{2}} A
 P_N F (X_{\lfloor {s} \rfloor_{\tau}}^{M,N})
 \|_{L^p(\Omega,\dot{H})} \dd s
\\ & \leq C
 \int_0^t (t-s)^{-\frac{\gamma}{4}} \dd s
 \sup_{t \in [0,T]} \|PF(X_t^{M,N})\|_{L^p(\Omega,\dot{H}^2)}
\\ & \leq C \Big( 1 +  \sup_{t\in[0,T]}
   \|X_t^{M,N}\|_{L^{3p}(\Omega,\dot{H}^{2})}^3 \Big) < \infty.
\end{split}
\end{equation*}
When $\gamma  = 4$,  we only need the boundedness of
$\| X_t^{M,N} \|_{L^{p}(\Omega,\dot{H}^{\frac d2 + 2 +\epsilon})}$
for small enough $\epsilon > 0$ due to the Sobolev embedding theorem.
This is guaranteed by the regularity estimate in $\dot{H}^{\gamma}, \gamma  \in  [3, 4)$.
Thus, the proof is finished.
\end{proof}

\begin{corollary}
Let Assumptions \ref{assum:linear-operator-A}--\ref{assum:intial-value-data} be fulfilled, then for any $p \ge 1$ and $\beta \in [0,\gamma]$,
there exists a constant $C > 0$ such that
\begin{equation*}
\sup_{M,N \in \N^{+}}
    \| X_t^{M,N} - X_s^{M,N} \|_{L^p(\Omega,\dot{H}^{\beta})}
      \leq C \, ( t - s )^{ \min \{ \frac{1}{2},\frac{\gamma-\beta}{4} \} },
      \,\, 0 \leq s < t \leq T.
\end{equation*}
\end{corollary}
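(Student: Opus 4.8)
The plan is to decompose the increment via the continuous mild representation \eqref{full;continuous version}, writing
\[
X_t^{M,N} - X_s^{M,N} = \big(E(t)-E(s)\big)P_N X_0 - \big(D_t - D_s\big) + \big(\mathcal{O}_t^{M,N} - \mathcal{O}_s^{M,N}\big),
\]
where $D_t := \int_0^t \tfrac{E(t-r)AP_N F(X_{\lfloor r \rfloor_{\tau}}^{M,N})}{1+\tau\|P_N F(X_{\lfloor r \rfloor_{\tau}}^{M,N})\|}\,\dd r$ is the tamed drift, and to bound each of the three pieces in $L^p(\Omega,\dot{H}^{\beta})$ separately, producing in each case the exponent $\min\{\tfrac12,\tfrac{\gamma-\beta}4\}$. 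The guiding principle is that the estimate runs parallel to the temporal regularity proof for the true solution behind Theorem \ref{thm:uniqueness-mild-solution}; the only genuinely new features are the time-discretized integrand $\lfloor r \rfloor_{\tau}$ in the stochastic convolution and the tamed denominator in the drift.

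For the linear term I would write $(E(t)-E(s))P_N X_0 = (E(t-s)-I)E(s)P_N X_0$ and insert $A^{\pm(\gamma-\beta)/2}$, obtaining
\[
\big\vert (E(t)-E(s))P_N X_0 \big\vert_{\beta} \leq \big\| A^{-\frac{\gamma-\beta}2}(I-E(t-s)) \big\|_{\mathcal{L}(\dot{H})}\,\vert X_0 \vert_{\gamma} \leq C (t-s)^{\frac{\gamma-\beta}4}\,\vert X_0 \vert_{\gamma},
\]
by \eqref{II-spatial-temporal-S(t)} (valid since $0\le\gamma-\beta\le4$) together with the contractivity of $E(s)$ and $P_N$ on $\dot{H}^{\gamma}$; Assumption \ref{assum:intial-value-data} then controls the $L^p(\Omega)$-norm, and $(t-s)^{(\gamma-\beta)/4}\le C (t-s)^{\min\{1/2,(\gamma-\beta)/4\}}$ on $[0,T]$. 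For the discrete stochastic convolution I would reproduce the two-term splitting from the proof of Lemma \ref{lem:stochastic-convolution}, i.e.\ an integral over $[0,s]$ of $A^{\beta/2}(E(t-\lfloor r \rfloor_{\tau})-E(s-\lfloor r \rfloor_{\tau}))Q^{1/2}$ and one over $[s,t]$ of $A^{\beta/2}E(t-\lfloor r \rfloor_{\tau})Q^{1/2}$, apply the Burkholder--Davis--Gundy inequality, and factor out $A^{(\gamma-2)/2}Q^{1/2}$ using \eqref{eq:ass-AQ-condition}. The one new observation is that the floor only helps: since $t-\lfloor r \rfloor_{\tau}\ge t-r$ and $\|A^{\varrho}E(\sigma)v\|$ is nonincreasing in $\sigma$ for $\varrho\ge0$, one has the pointwise domination $\|A^{\varrho}E(t-\lfloor r \rfloor_{\tau})v\|\le\|A^{\varrho}E(t-r)v\|$, so the Hilbert--Schmidt forms of \eqref{II-spatial-temporal-S(t)}--\eqref{III-spatial-temporal-S(t)} yield the same bound $C(t-s)^{\min\{1/2,(\gamma-\beta)/4\}}$ as in the continuous case, uniformly in $M,N$ (the $\tfrac12$-cap arising exactly when $\gamma-\beta\ge2$, where one resorts to $\|A^{-1}(I-E(t-s))\|_{\mathcal{L}(\dot{H})}\le C(t-s)^{1/2}$).

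The drift is the crux. I would split $D_t-D_s = (E(t-s)-I)D_s + \int_s^t \tfrac{E(t-r)AP_N F(X_{\lfloor r \rfloor_{\tau}}^{M,N})}{1+\tau\|\cdots\|}\,\dd r$. The first summand is treated exactly like the linear term once one notes, from \eqref{full;continuous version}, that $D_s = E(s)P_N X_0 + \mathcal{O}_s^{M,N} - X_s^{M,N}$ satisfies $\sup_s\|D_s\|_{L^p(\Omega,\dot{H}^{\gamma})}<\infty$ uniformly in $M,N$, by Assumption \ref{assum:intial-value-data}, the $\dot{H}^{\gamma}$-bound on $\mathcal{O}_s^{M,N}$ proved inside Corollary \ref{X_t;gamma bound}, and Corollary \ref{X_t;gamma bound} itself; this gives the contribution $C(t-s)^{(\gamma-\beta)/4}$. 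For the near-diagonal summand I would move powers of $A$ onto the nonlinearity: since the tamed denominator is $\ge1$, one has $\vert P_N F(X_{\lfloor r \rfloor_{\tau}}^{M,N})/(1+\tau\|\cdots\|)\vert_{\eta}\le\vert P F(X_{\lfloor r \rfloor_{\tau}}^{M,N})\vert_{\eta}$, and rewriting $A^{\beta/2}E(t-r)A = A^{\frac\beta2+1-\frac\eta2}E(t-r)A^{\eta/2}$ reduces the matter to $\int_s^t (t-r)^{-(\beta+2-\eta)/4}\,\dd r \le C(t-s)^{(2+\eta-\beta)/4}$. Choosing $\eta=0$ when $\gamma\le2$ (so $\|PF\|\le C(1+\|X\|_{L^6}^3)$ is controlled by Theorem \ref{full;a priori estimate}) and $\eta=2$ when $\gamma\ge2$ (so the algebra bound \eqref{eq:norm-algebra} gives $\vert PF\vert_{2}\le C(1+\vert X\vert_{\gamma}^3)$, controlled by Corollary \ref{X_t;gamma bound}) makes the resulting exponent dominate $\min\{\tfrac12,\tfrac{\gamma-\beta}4\}$ in every case, the integral converging since $\beta<4$ away from the trivial endpoint $\beta=\gamma$. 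I expect this to be the main obstacle, and it is precisely why Corollary \ref{X_t;gamma bound} is indispensable: the unbounded operator $A$ in front of $F$, compounded with $A^{\beta/2}$, produces the strong singularity $(t-r)^{-(\beta+2-\eta)/4}$, so one must extract as much spatial regularity of $PF(X^{M,N})$ as the moment bounds permit (up to $\dot{H}^2$ through the algebra property) to keep the time integral convergent and attain the optimal rate.
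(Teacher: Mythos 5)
Your proposal is correct, and it is essentially the argument the paper has in mind: the paper states this corollary without proof, declaring it ``trivial to verify'' once the uniform moment bounds of Theorem \ref{full;a priori estimate} and Corollary \ref{X_t;gamma bound} are available, and your mild-form decomposition into initial term, tamed drift and discrete stochastic convolution is exactly the standard argument (parallel to Lemma \ref{lem:stochastic-convolution} and Theorem \ref{thm:uniqueness-mild-solution}) that this remark alludes to. The details you supply---the domination $\|A^{\varrho}E(t-\lfloor r \rfloor_{\tau})v\|\le\|A^{\varrho}E(t-r)v\|$ justifying that the floor only helps in the convolution estimate, the splitting $D_t-D_s=(E(t-s)-I)D_s+\int_s^t(\cdots)\,\dd r$ with $D_s$ bounded in $\dot{H}^{\gamma}$ via Corollary \ref{X_t;gamma bound}, and the $\eta=0$ versus $\eta=2$ choice for the nonlinearity according to whether $\gamma\le 2$ or $\gamma\ge 2$---are sound and consistent with the estimates the paper uses elsewhere.
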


\subsection{Strong convergence rate of the fully discrete scheme}
\label{sec;strong convergence}

In this subsection, we are well prepared to analyze the strong convergence rate of the tamed exponential Euler method.

\begin{theorem}[Strong convergence rate of temporal semi-discretization]
\label{theo:strong-time-discretization}
Suppose Assumptions \ref{assum:linear-operator-A}--\ref{assum:intial-value-data}
are valid.
Let $X^N(t)$ and $X_t^{M,N}$ be given by \eqref{eq:space-mild} and
\eqref{full;continuous version}, respectively.
Then for all $ p \geq 1$ we have
\begin{equation*}
\sup_{M,N \in \N^{+}}  \sup_{t \in [0,T] }
\big\| X^N(t) - X_{t}^{M,N} \big\|_{L^p (\Omega , \dot{H})}
   \leq   C \, \tau^{\frac \gamma 4}.
\end{equation*}
\end{theorem}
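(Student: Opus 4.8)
The plan is to subtract the integral representations \eqref{eq:space-mild} and \eqref{full;continuous version}: the common term $E(t)P_NX_0$ cancels and leaves $X^N(t)-X_t^{M,N}=\widetilde e_t+\rho_t$, where $\rho_t:=\int_0^t\big(E(t-s)-E(t-\lfloor s\rfloor_\tau)\big)P_N\,\dd W(s)$ is the difference of the two stochastic convolutions and $\widetilde e_t:=V^N(t)-Y_t^{M,N}$ is the difference of the two remainders, with $V^N(t):=X^N(t)-\int_0^tE(t-s)P_N\,\dd W(s)$ and $Y_t^{M,N}$ as in the text. I would first dispose of $\rho_t$. By the Burkholder--Davis--Gundy inequality it suffices to bound $\int_0^t\|(E(t-s)-E(t-\lfloor s\rfloor_\tau))Q^{\frac12}\|_{\mathcal L_2}^2\,\dd s$; writing $E(t-s)-E(t-\lfloor s\rfloor_\tau)=\int_{\lfloor s\rfloor_\tau}^sA^2E(t-r)\,\dd r$, inserting $A^{\frac{2-\gamma}{2}}A^{\frac{\gamma-2}{2}}$ to expose the finite quantity $\|A^{\frac{\gamma-2}{2}}Q^{\frac12}\|_{\mathcal L_2}$ from \eqref{eq:ass-AQ-condition}, and using the smoothing \eqref{I-spatial-temporal-S(t)} reduces the matter to a scalar time integral. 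Treating the last subinterval $[\lfloor t\rfloor_\tau,t]$ by a separate crude bound (there the integral representation is too singular) then yields $\|\rho_t\|_{L^p(\Omega,\dot H)}\le C\tau^{\gamma/4}$.

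For $\widetilde e_t$ the obstacle is the unbounded $A$ standing in front of $F$: estimating the drift integral directly by \eqref{eq:local-condition} and $\|AE(t-s)\|_{\mathcal L(\dot H)}\le C(t-s)^{-1/2}$ would produce a singular Gronwall kernel carrying the random coefficient $1+\|X^N\|_V^2+\|X^{M,N}\|_V^2$, which cannot be closed in $L^p(\Omega)$. Instead I would argue variationally, as in the proof of Lemma \ref{lem:Ys-bound-subevents1}. From \eqref{eq:moment-Yt} and the analogous noise-free equation for $V^N$, the remainder error solves $\partial_t\widetilde e_t+A^2\widetilde e_t+AP_N\big(F(X^N(t))-F(X_t^{M,N})\big)=-AP_NZ_t^{M,N}$. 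Testing with $A^{-1}\widetilde e_t$ gives $\tfrac12\tfrac{\dd}{\dd t}|\widetilde e_t|_{-1}^2+|\widetilde e_t|_1^2+\langle F(X^N(t))-F(X_t^{M,N}),\widetilde e_t\rangle=-\langle Z_t^{M,N},\widetilde e_t\rangle$; writing $\widetilde e_t=e_t-\rho_t$ in the nonlinear pairing and invoking the one-sided condition \eqref{eq:one-side-condition} tames the feedback with an \emph{absolute} constant, the cross term with $\rho_t$ being absorbed by Young's inequality against the already-controlled $\|\rho_t\|$ and the $V$-moments. A Gronwall argument with a deterministic constant then bounds $\sup_t\||\widetilde e_t|_{-1}\|_{L^p(\Omega)}$ and $\big\|\int_0^T|\widetilde e_s|_1^2\,\dd s\big\|_{L^{p/2}(\Omega)}$.

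To upgrade to the $\dot H$-norm required by the statement, I would run a second energy estimate, testing the same error equation with $\widetilde e_t$ itself. The sign-indefinite term becomes $\langle F(X^N)-F(X^{M,N}),A\widetilde e_t\rangle=\langle\nabla(F(X^N)-F(X^{M,N})),\nabla\widetilde e_t\rangle$, whose leading contribution $\int_{\mathcal D}(3(X_t^{M,N})^2-1)|\nabla\widetilde e_t|^2\,\dd x\ge-|\widetilde e_t|_1^2$ carries the favourable sign up to the $|\widetilde e_t|_1^2$ already integrated in the first step, while the remaining terms (those involving $\nabla X^{M,N}$ and $\rho_t$, and the forcing $Z^{M,N}$) are absorbed into the $\dot H^2$-dissipation $|\widetilde e_t|_2^2$ by Young's inequality and estimated through the uniform moment bounds of Theorem \ref{full;a priori estimate} and Corollary \ref{X_t;gamma bound} together with Lemma \ref{lem:H-1}. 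This two-step structure, first $A^{-1}\widetilde e$ and then $\widetilde e$, mirrors Lemma \ref{lem:Ys-bound-subevents1} and delivers $\|\widetilde e_t\|_{L^p(\Omega,\dot H)}\le C\tau^{\gamma/4}$; combining with the bound on $\rho_t$ finishes the proof.

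The main obstacle is exactly the feedback term under the unbounded operator, which forces the weak-norm variational route above in place of a naive mild-form estimate. A second, more delicate point is attaining the full exponent $\gamma/4$ for $\gamma\in(2,4]$, i.e.\ beyond the temporal H\"older exponent $\tfrac12$ of the mild solution: this requires measuring the taming-and-time-discretization defect $Z_t^{M,N}$ and the increment $X_t^{M,N}-X_{\lfloor t\rfloor_\tau}^{M,N}$ in negative-order spaces, which is where the additive nature of the noise is exploited.
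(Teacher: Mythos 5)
Your decomposition $X^N(t)-X_t^{M,N}=\widetilde e_t+\rho_t$ is sound, your bound for $\rho_t$ is essentially the paper's estimate of its term $J_3$, and the $\dot H^{-1}$-energy/Gronwall treatment of the feedback term $F(X^N)-F(X_t^{M,N})$ mirrors the paper's Step~2. The genuine gap is in how the discretization defect enters. In your error equation every defect (time discretization of the drift, taming, discretization of the stochastic convolution inside $X^{M,N}$) is funneled into the forcing $Z_t^{M,N}$, which you then estimate \emph{pathwise} (Cauchy--Schwarz/Young) so that $Z$ enters the Gronwall inequality linearly in a fixed spatial norm. But $Z_t^{M,N}$ contains $F(X_t^{M,N})-F(X_{\lfloor t\rfloor_\tau}^{M,N})$, whose dominant part is driven by the noise increment $\mathcal O_t^{M,N}-\mathcal O_{\lfloor t\rfloor_\tau}^{M,N}$, and this increment is only $O(\tau^{1/2})$ in $L^p(\Omega,\dot H^{\alpha})$ for \emph{every} $\alpha$ --- the cap $\min\{\tfrac12,\cdot\}$ in Lemma \ref{lem:stochastic-convolution} is Brownian scaling, and measuring in negative-order spaces does not remove it (multiplication by $F'(\xi)$ also destroys any martingale structure). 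Consequently your argument can only deliver $\tau^{\min\{1/2,\gamma/4\}}$: correct for $\gamma\in(\tfrac d2,2]$, but short of the claimed $\tau^{\gamma/4}$ for $\gamma\in(2,4]$. Your closing remark that the full rate is obtained by "measuring $Z_t^{M,N}$ in negative-order spaces" is precisely the step that fails.

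What the paper does differently, and what your sketch is missing, is a mechanism that exploits orthogonality of the noise increments rather than their pathwise size. The paper's auxiliary process $\widetilde X_t^{M,N}$ has its drift driven by $F(X_s^{M,N})$ (continuous argument, untamed, undiscretized convolution), so the defect estimate $\|\widetilde X_t^{M,N}-X_t^{M,N}\|$ (Step~1) contains \emph{no feedback}; the defect can then be Taylor-expanded, and the critical noise-increment term (their $J_{13}$, i.e. $\int_0^t E(t-s)AP_NF'(X_{\lfloor s\rfloor_\tau}^{M,N})\int_{\lfloor s\rfloor_\tau}^{s}E(s-\lfloor r\rfloor_\tau)P_N\,\dd W(r)\,\dd s$) is rewritten by the stochastic Fubini theorem as a single stochastic integral and estimated by the Burkholder--Davis--Gundy inequality. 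There the increments over disjoint subintervals contribute in square-sum rather than linearly, which is exactly what beats the H\"older-$\tfrac12$ barrier and yields rates up to $\tau$ (their exponent $\tfrac{4-\max\{3-\gamma,0\}}{4}\geq\tfrac\gamma4$). Only afterwards does the Gronwall/variational step see a forcing that is already $O(\tau^{\gamma/2})$ when squared, with the $\dot H$-norm upgrade done via the mild form, Lemma \ref{lem:H-1} and the integrated $\dot H^1$ error. To repair your proof for $\gamma\in(2,4]$ you would have to peel the stochastic-increment part out of $Z_t^{M,N}$ and treat it by this Fubini--BDG argument rather than absorbing it pathwise --- at which point you have essentially reconstructed the paper's two-step structure.
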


\begin{proof}
Firstly, we introduce an auxiliary process,
\begin{equation*}
\widetilde{X}_t^{M,N} =  E(t) P_N X_0
   - \int_0^{t} E (t-s) A P_N F (X_s^{M,N}) \dd s
   + \int_0^{t} E (t-s) P_N \dd W(s).
\end{equation*}
According to the uniform moment bounds of $X_s^{M,N}$,
we can follow a standard approach to obtain
$\| \widetilde{X}_t^{M,N} \|_{L^p(\Omega,\dot{H}^{\gamma})} < \infty$ for any
$ t \in [0,T]$.
Then we can separate
$\|X^N(t)-X_t^{M,N}\|_{L^p (\Omega , \dot{H})}$
into two terms:
\begin{equation*}
\|X^N(t)-X_t^{M,N}\|_{L^p(\Omega,\dot{H})} \leq
     \|\widetilde{X}_t^{M,N}-X_t^{M,N}\|_{L^p(\Omega,\dot{H})}
      +  \|X^N(t)-\widetilde{X}_t^{M,N}\|_{L^p(\Omega,\dot{H})}.
\end{equation*}

Next, we split the proof into two parts.
\newline

\noindent $\mathbf{Step 1:}$
Estimate of
$\|\widetilde{X}_t^{M,N}-X_t^{M,N}\|_{L^p(\Omega,\dot{H})}$.
\newline

We decompose the  error $\|\widetilde{X}_t^{M,N}-X_t^{M,N}\|_{L^p(\Omega,\dot{H})}$ into three further parts,
\begin{align*}
\begin{split}
\| &  \widetilde{X}_t^{M,N}  - X_t^{M,N} \|_{L^p(\Omega,\dot{H})}
    \\ & = \Big\| \int_0^t
        \tfrac{E(t-s)A P_N F (X_{\lfloor {s} \rfloor_{\tau}}^{M,N})}
        {1+\tau \| P_N F (X_{\lfloor {s} \rfloor_{\tau}}^{M,N}) \|} \dd s
        - \int_0^t E(t-s) A  P_N F(X_s^{M,N}) \dd s
          \Big\|_{L^p(\Omega,\dot{H})}
    \\ & \quad + \Big\| \int_0^{t} E (t-s) P_N \dd W(s)
         - \int_0^{t} E (t-\lfloor {s} \rfloor_{\tau}) P_N \dd W(s)
                    \Big\|_{L^p(\Omega,\dot{H})}
    \\ & \leq \Big\| \int_0^t E(t-s) A  P_N \big(  F(X_s^{M,N})
             - F(X_{\lfloor {s} \rfloor_{\tau}}^{M,N}) \big) \dd s
              \Big\|_{L^p(\Omega,\dot{H})}
    \\ & \quad + \Big\| \int_0^t E(t-s) A P_N
               F(X_{\lfloor {s} \rfloor_{\tau}}^{M,N}) \dd s - \int_0^t
           \tfrac{E(t-s)A P_N F (X_{\lfloor {s} \rfloor_{\tau}}^{M,N})}
          {1+\tau \| P_N F (X_{\lfloor {s} \rfloor_{\tau}}^{M,N}) \|} \dd s
                 \Big\|_{L^p(\Omega,\dot{H})}
    \\ & \quad + \Big\| \int_0^{t} E (t-s) P_N \dd W(s)
         - \int_0^{t} E (t-\lfloor {s} \rfloor_{\tau}) P_N \dd W(s)
                    \Big\|_{L^p(\Omega,\dot{H})}
    \\ &=: J_1 + J_2 + J_3.
\end{split}
\end{align*}
By using Taylor's formula and mild form of $X_t^{M,N}$,
we divide $J_1$ into four terms,
\begin{equation*}
\begin{split}
J_1 & \leq \Big\| \int_0^t E(t-s) A P_N
           F'( {X}_{\lfloor {s} \rfloor_{\tau}}^{M,N} )
      ( E( s - \lfloor {s} \rfloor_{\tau} ) - I )
             X_{\lfloor {s} \rfloor_{\tau}}^{M,N}  \dd s
           \Big\|_{L^p (\Omega , \dot{H})}
  \\ & \quad + \Big\| \int_0^t E(t-s) A P_N
           F'( X_{\lfloor {s} \rfloor_{\tau}}^{M,N} )
          \int_{\lfloor {s} \rfloor_{\tau}}^s  E ( s - r ) A P_N
           F( X_{\lfloor {r} \rfloor_{\tau}}^{M,N} ) \dd r \dd s
               \Big\|_{L^p (\Omega , \dot{H})}
  \\ & \quad + \Big\| \int_0^t E(t-s) A P_N
           F'( {X}_{\lfloor {s} \rfloor_{\tau}}^{M,N} )
          \int_{\lfloor {s} \rfloor_{\tau}}^s
           E( s - \lfloor {r} \rfloor_{\tau} ) P_N  \dd W(r) \dd s
               \Big\|_{L^p (\Omega , \dot{H})}
  \\ & \quad + \Big\| \int_0^t   E(t-s) A P_N
          \int_0^1  F''( \lambda(X_s^{M,N},
                       X_{\lfloor {s} \rfloor_{\tau}}^{M,N} ) )
  \\ & \qquad \times  ( X_s^{M,N} - X_{\lfloor {s} \rfloor_{\tau}}^{M,N},
          X_s^{M,N} - X_{\lfloor {s} \rfloor_{\tau}}^{M,N} )
        ( 1 - \lambda )  \dd \lambda \dd s
               \Big\|_{L^p (\Omega , \dot{H})}
  \\ & =: J_{11} + J_{12} + J_{13} + J_{14},
\end{split}
\end{equation*}
where $ \lambda ( X_s^{M,N}, X_{\lfloor {s} \rfloor_{\tau}}^{M,N} )
      :=  {X}_{\lfloor {s} \rfloor_{\tau}}^{M,N}
      + \lambda ( X_s^{M,N} - X_{\lfloor {s} \rfloor_{\tau}}^{M,N} )$.

Subsequently, we  treat the above four terms separately.
It follows from \eqref{I-spatial-temporal-S(t)}, \eqref{II-spatial-temporal-S(t)}, \eqref{eq:F'-condition} and
Corollary \ref{X_t;gamma bound} that
\begin{align*}
\begin{split}
J_{11} & \leq \int_0^t \Big\| E( t - s )  A P_N
         F'( X_{\lfloor {s} \rfloor_{\tau}}^{M,N} )
       ( E ( s - {\lfloor {s} \rfloor_{\tau}} ) - I )
             X_{\lfloor {s} \rfloor_{\tau}}^{M,N}
                    \Big\|_{L^p(\Omega,\dot{H})} \dd s
    \\ & \leq C \int_0^t ( t - s )^{-\frac 12}
       \|  F'( X_{\lfloor {s} \rfloor_{\tau}}^{M,N} )
       ( E ( s - {\lfloor {s} \rfloor_{\tau}} ) - I )
             X_{\lfloor {s} \rfloor_{\tau}}^{M,N}
                        \|_{L^{p}(\Omega,\dot{H})} \dd s
    \\ & \leq C \int_0^t ( t - s )^{-\frac 12}
           \big( 1 + \| X_{\lfloor {s} \rfloor_{\tau}}^{M,N}
               \|^2_{L^{4p}(\Omega,\dot{H}^{\gamma})} \big)
    \\ & \qquad  \qquad \qquad \times
           \| ( E( s - {\lfloor {s} \rfloor_{\tau}}) - I )
           A^{- \frac \gamma 2}   A^{\frac \gamma 2}
             X_{\lfloor {s} \rfloor_{\tau}}^{M,N}
              \|_{L^{2p}(\Omega,\dot{H})} \dd s
    \\ & \leq C \, \tau^{\frac \gamma 4}.
\end{split}
\end{align*}
For the error term $J_{12}$, we apply \eqref{I-spatial-temporal-S(t)},
\eqref{eq:F'-condition} and Corollary \ref{X_t;gamma bound} to deduce that
for $\gamma \in (\tfrac d2,2)$,
\begin{align*}
\begin{split}
J_{12} & \leq \int_0^{t} \Big\| E(t-s) A P_N
    F'( {X}_{\lfloor {s} \rfloor_{\tau}}^{M,N} )
     \int_{\lfloor {s} \rfloor_{\tau}}^s E ( s - r ) A P_N
    F ( X_{\lfloor {r} \rfloor_{\tau}}^{M,N} ) \dd r
           \Big\|_{L^p (\Omega,\dot{H})} \dd s
    \\ & \leq \int_0^{t} \int_{\lfloor {s} \rfloor_{\tau}}^s
    ( t - s )^{-\frac 12} \big\| F'(X_{\lfloor {s} \rfloor_{\tau}}^{M,N})
          E( s - r ) A P_N F( X_{\lfloor {r} \rfloor_{\tau}}^{M,N})
           \big\|_{L^{p}(\Omega,\dot{H})} \dd r \, \dd s
    \\ & \leq \int_0^{t} \int_{\lfloor {s} \rfloor_{\tau}}^s
    ( t - s )^{-\frac 12} ( 1 + \| {X}_{\lfloor {s} \rfloor_{\tau}}^{M,N}
           \|^2_{L^{4p}(\Omega,\dot{H}^{\gamma})} )
    \\ & \qquad \qquad \qquad \times ( s - r )^{-\frac12}
           \| P F(X_{\lfloor {r} \rfloor_{\tau}}^{M,N})
              \|_{L^{2p}(\Omega,\dot{H})} \dd r \, \dd s
    \\ & \leq C\, \tau^{\frac 12} \int_0^{t} ( t - s )^{- \frac 12} \dd s
    \big( 1 + \sup_{ s \in [0,T] } \|{X}_{\lfloor {s} \rfloor_{\tau}}^{M,N}
                 \|^2_{L^{4p}(\Omega,\dot{H}^{\gamma})} \big)
    \\ & \qquad \qquad \qquad \times \sup_{ r \in [0,T] }
       \| P F(X_{\lfloor {r} \rfloor_{\tau}}^{M,N})
                            \|_{L^{2p}(\Omega,\dot{H})}
    \\ & \leq C \, \tau^{\frac 12},
\end{split}
\end{align*}
and for $\gamma \in [2,4]$,
\begin{align*}
\begin{split}
J_{12} & \leq \int_0^t \Big\| E(t-s) A P_N
           F'({X}_{\lfloor {s} \rfloor_{\tau}}^{M,N})
              \int_{\lfloor {s} \rfloor_{\tau}}^s
           E( s - r ) A P_N F(X_{\lfloor {r} \rfloor_{\tau}}^{M,N}) \dd r
               \Big\|_{L^p (\Omega , \dot{H})} \dd s
    \\ & \leq \int_0^t \int_{\lfloor {s} \rfloor_{\tau}}^s
      ( t - s )^{-\frac 12} \big\| F'(X_{\lfloor {s} \rfloor_{\tau}}^{M,N})
           E( s - r ) A P_N F( X_{\lfloor {r} \rfloor_{\tau}}^{M,N} )
               \big\|_{L^{p}(\Omega,\dot{H})} \dd r \, \dd s
    \\ & \leq \int_0^t \int_{\lfloor {s} \rfloor_{\tau}}^s
      ( t - s )^{-\frac 12}  ( 1 + \| {X}_{\lfloor {s} \rfloor_{\tau}}^{M,N}
         \|^2_{L^{4p}(\Omega,\dot{H}^\gamma)} )
         \| P F(X_{\lfloor {r} \rfloor_{\tau}}^{M,N})
             \|_{L^{2p}(\Omega,\dot{H}^{2})}  \dd r \, \dd s
    \\ & \leq C\, \tau \int_0^t (t - s )^{-\frac 12} \dd s
    \big( 1 + \sup_{s\in[0,T]} \|{X}_{\lfloor {s} \rfloor_{\tau}}^{M,N}
        \|^2_{L^{4p}(\Omega,\dot{H}^\gamma)} \big)
   \\ & \qquad \qquad \qquad \times \sup_{r \in [0,T]}
     \| P F(X_{\lfloor {r} \rfloor_{\tau}}^{M,N})
      \|_{L^{2p}(\Omega,\dot{H}^2)}
   \\ & \leq C \, \tau.
\end{split}
\end{align*}
Concerning $J_{13}$, we have
\begin{align*}
\begin{split}
J_{13} &= \Big\| \int_0^t E(t - s) A P_N
                 F'({X}_{\lfloor {s} \rfloor_{\tau}}^{M,N})
                 \int_{\lfloor {s} \rfloor_{\tau}}^s
                 E( s - \lfloor {r} \rfloor_{\tau} ) P_N \dd W(r) \, \dd s
               \Big\|_{L^{p}(\Omega,\dot{H})}
   \\  & \leq \Big\| \int_0^{\lfloor {t} \rfloor_{\tau}} E(t - s) A P_N
                F'({X}_{\lfloor {s} \rfloor_{\tau}}^{M,N})
                \int_{\lfloor {s} \rfloor_{\tau}}^s
               E( s - \lfloor {r} \rfloor_{\tau} ) P_N \dd W(r) \, \dd s
               \Big\|_{L^{p}(\Omega,\dot{H})}
   \\  &  \quad + \Big\| \int_{\lfloor {t} \rfloor_{\tau}}^t E(t - s) A P_N     F'({X}_{\lfloor {s} \rfloor_{\tau}}^{M,N})
               \int_{\lfloor {s} \rfloor_{\tau}}^s
               E( s - \lfloor {r} \rfloor_{\tau} ) P_N \dd W(r) \, \dd s
               \Big\|_{L^{p}(\Omega,\dot{H})}
   \\  &  =: J_{131} + J_{132}.
\end{split}
\end{align*}
Without loss of generality, we assume that there exists an integer $m \in \N$
such that $\lfloor {t} \rfloor_{\tau}=t_m$
 and then apply the stochastic Fubini theorem, the Burkholder--Davis--Gundy--type inequality and the H\"{o}lder inequality to obtain
\begin{footnotesize}
\begin{align*}
\begin{split}
J_{131} & = \Big\| \sum_{k=0}^{m-1} \int_{t_{k}}^{t_{k+1}} E(t-s) A P_N
     F'( {X}_{t_k}^{M,N} )  \int_{t_{k}}^{s}
     E( s - \lfloor {r} \rfloor_{\tau} ) \dd W(r) \, \dd s
            \Big\|_{L^{p}(\Omega,\dot{H})}
     \\ & = \Big\| \sum_{k=0}^{m-1} \int_{t_{k}}^{t_{k+1}}
              \int_{t_{k}}^{t_{k+1}} \chi_{[t_k,s)}(r) E(t-s) A P_N
     F'({X}_{t_k}^{M,N}) E( s - \lfloor {r} \rfloor_{\tau} ) \dd s \, \dd W(r)
            \Big\|_{L^{p}(\Omega,\dot{H})}
     \\ & \leq \Big( \sum_{k=0}^{m-1}\!\! \int_{t_{k}}^{t_{k+1}} \!\!
            \Big\| \int_{t_{k}}^{t_{k+1}} \!\!\! \chi_{[t_k,s)}(r) E(t-s) A P_N
     F'({X}_{t_k}^{M,N}) E( s - \lfloor {r} \rfloor_{\tau} ) Q^{\frac12} \dd s
            \Big\|_{L^p(\Omega,\mathcal{L}_2)}^2 \!\!\! \dd r \Big)^{\frac12}
     \\ & \leq C \tau^{\frac12} \Big( \sum_{k=0}^{m-1} \!\!
     \int_{t_{k}}^{t_{k+1}} \!\!\!  \int_{t_k}^{t_{k+1}}\!\! \sum_{l=1}^\infty
            \| E( t - s ) A  F'({X}_{t_k}^{M,N})
         E( s - \lfloor {r} \rfloor_{\tau} ) Q^{\frac12} \eta_l \|_{L^p(\Omega,\dot{H})}^2 \dd s  \, \dd r  \Big)^{\frac12}.
\end{split}
\end{align*}
\end{footnotesize}
Further, by using \eqref{I-spatial-temporal-S(t)}, Lemma \ref{lem:H-1},
Corollary \ref{X_t;gamma bound} and \eqref{eq:ass-AQ-condition},
one can find that for $\gamma \in (1,4]$ and $\kappa =  \tfrac 34, d=1$ and
$\kappa =  1, d=2,3$,
\begin{align*}
\begin{split}
J_{131} & \leq C \, \tau^{\frac12} \Big( \sum_{k=0}^{m-1}
          \int_{t_{k}}^{t_{k+1}}\int_{t_k}^{t_{k+1}} \sum_{l=1}^\infty
          (t - s)^{-\frac {2-\kappa}2}
  \\ & \qquad \times\| A^{\frac \kappa 2}  F'({X}_{t_k}^{M,N})
          E( s - \lfloor {r} \rfloor_{\tau} ) Q^{\frac12} \eta_l \|_{L^p(\Omega,\dot{H})}^2 \dd s \, \dd r \Big)^{\frac12}
    \\  & \leq C \, \tau^{\frac12} \Big( \sum_{k=0}^{m-1}
          \int_{t_{k}}^{t_{k+1}} (t - s)^{-\frac {2-\kappa}2}
          ( 1 + \| {X}_{t_k}^{M,N} \|^4_{L^{2p}(\Omega,\dot{H}^\gamma)})
    \\  & \qquad \qquad \times \int_{t_{k}}^{t_{k+1}} \sum_{l=1}^\infty \|
         A^{\frac12} A^{\frac{2-\gamma}2} E( s - \lfloor {r} \rfloor_{\tau}) A^{\frac{\gamma-2}2} Q^{\frac12}  \eta_l\|^2 \dd r \, \dd s \Big)^{\frac12}
    \\  & \leq C \, \tau^{\frac{4-\max\{3-\gamma,0\}}4}
          \Big( \sum_{k=0}^m \int_{t_k}^{t_{k+1}} (t - s)^{-\frac {2-\kappa}2} \dd s
          \Big)^{\frac12} \|A^{\frac{\gamma-2}2}Q^{\frac12}\|_{\mathcal{L}_2}
    \\  & \leq C \, \tau^{\frac{4-\max\{3-\gamma,0\}}4}.
\end{split}
\end{align*}
For $\gamma \in (\tfrac 12 ,1]$,
from $\lfloor {t} \rfloor_{\tau}=t_m$, \eqref{I-spatial-temporal-S(t)},
\eqref{III-spatial-temporal-S(t)},
\eqref{eq:F'-condition}, \eqref{eq:ass-AQ-condition}, Corollary
\ref{X_t;gamma bound} and
 Burkholder--Davis--Gundy--type inequality, we deduce
\begin{align*}
\begin{split}
J_{131} & = \Big\| \sum_{k=0}^{m-1} \int_{t_{k}}^{t_{k+1}} E(t-s) A P_N
     F'( {X}_{t_k}^{M,N} )  \int_{t_{k}}^{s}
     E( s - \lfloor {r} \rfloor_{\tau} ) \dd W(r) \, \dd s
            \Big\|_{L^{p}(\Omega,\dot{H})}
   \\ & \leq \sum_{k=0}^{m-1} \int_{t_{k}}^{t_{k+1}} ( t -s )^{-\frac12}
         \big( 1 + \| {X}_{t_k}^{M,N} \|_{L^{{\color{blue}4}p}(\Omega,V)}^{{\color{blue}2}} )
         \\ & \qquad \times
         \Big\| \int_{t_{k}}^{s} E( s - \lfloor {r} \rfloor_{\tau} ) \dd W(r) \Big\|_{L^{2p}(\Omega,\dot{H})} \dd s
     \\ & \leq C \sum_{k=0}^{m-1} \int_{t_{k}}^{t_{k+1}}
     ( t -s )^{-\frac12}
     \Big( \int_{t_{k}}^{s}
         \| E( s - \lfloor {r} \rfloor_{\tau} ) \|_{\mathcal{L}_2}^2
         {\color{blue} \dd r}
         \Big)^{\frac 12} \dd s
     \\ & \leq C \, \tau^{\frac \gamma 4}.
\end{split}
\end{align*}
In what follows, we use the same argument to estimate $J_{132}$,
\begin{align*}
\begin{split}
J_{132} & \leq \int_{\lfloor {t} \rfloor_{\tau}}^t
   \Big\| E(t-s) A P_N F'(X_{\lfloor {s} \rfloor_{\tau}}^{M,N})
      \int_{\lfloor {s} \rfloor_{\tau}}^s
      E( s - \lfloor {r} \rfloor_{\tau} ) P_N \dd W(r)
   \Big\|_{L^{p}(\Omega,\dot{H})} \dd s
    \\ & \leq C \int_{\lfloor {t} \rfloor_{\tau}}^t ( t - s )^{-\frac12} \dd s
      \Big( 1 + \sup_{s\in[0,T]}
      \| X_s^{M,N} \|_{L^{2p}(\Omega,\dot{H}^{\gamma})}^2 \Big)
        \tau^{ \frac{\rm{min}\{\gamma,2\}}4 }
    \\ & \leq C \, \tau^{ \frac{ 2 + \rm{min}\{\gamma,2\} } 4 }.
\end{split}
\end{align*}
Owing to the fact $L^1 \subset \dot{H}^{-\delta_0}$ with
$\delta_0 \in ( \tfrac32 , 2 )$ and the regularity of $X_t^{M,N}$,
we obtain
\begin{align*}
\begin{split}
J_{14} & \leq C \int_{0}^t ( t - s )^{-\frac{2+\delta_0}4}
  \Big\| \int_0^1 F''( \lambda (  X_s^{M,N},
          X_{\lfloor {s} \rfloor_{\tau}}^{M,N} ) )
    \\ & \qquad \qquad \times
    (  X_s^{M,N} -  X_{\lfloor {s} \rfloor_{\tau}}^{M,N},
        X_s^{M,N} -  X_{\lfloor {s} \rfloor_{\tau}}^{M,N} )
       ( 1 - \lambda ) \dd \lambda \Big\|_{L^p(\Omega, L^1)} \dd s
    \\ & \leq C \int_0^t \int_0^1 (t-s)^{ -\frac{ 2+\delta_0 } 4 }
        \|  X_s^{M,N} - X_{\lfloor {s} \rfloor_{\tau}}^{M,N}
            \|^2_{L^{4p}(\Omega,\dot{H})}
    \\ & \qquad \qquad \times
        \| \lambda ( X_s^{M,N}, X_{\lfloor {s} \rfloor_{\tau}}^{M,N})
            \|_{L^{2p}(\Omega,V)} \,\dd \lambda\,\dd s
    \\ & \leq C \, \tau^{ \min \{ 1,\frac\gamma2 \} }
          \Big( 1 + \sup_{ s \in [0,T] }
            \|X_s^{M,N}\|_{L^{2p}(\Omega,\dot{H}^\gamma)} \Big)
            \int_0^t (t-s)^{-\frac{2+\delta_0}4} \dd s
    \\ & \leq C \, \tau^{ \min \{ 1,\frac\gamma2 \} }.
\end{split}
\end{align*}
Combining the above estimates together leads to
\begin{equation*}
J_1 \leq C \, \tau^{ \frac {\gamma} 4 }.
\end{equation*}
Due to the regularity of $X_t^{M,N}$ in Corollary \ref{X_t;gamma bound} and properties of nonlinear term $F$, we obtain
\begin{align*}
\begin{split}
J_2 & = \Big\| \int_0^t E(t-s) A  P_N
     F(X_{\lfloor {s} \rfloor_{\tau}}^{M,N})  \dd s
     -  \int_0^t \tfrac{E(t-s)A P_N F (X_{\lfloor {s} \rfloor_{\tau}}^{M,N})}
    {1 + \tau \| P_N F (X_{\lfloor {s} \rfloor_{\tau}}^{M,N}) \|} \dd s
        \Big\|_{L^p(\Omega,\dot{H})}
 \\ & \leq C \, \tau \int_0^t ( t - s )^{-\frac12} \dd s
    \sup_{s \in [0,T]} \| P F(X_s^{M,N}) \|^2_{L^{2p}(\Omega,\dot{H})}
 \\ & \leq C \, \tau.
\end{split}
\end{align*}
It remains to estimate $J_3$ by virtue of \eqref{II-spatial-temporal-S(t)},
\eqref{III-spatial-temporal-S(t)} and \eqref{eq:ass-AQ-condition},
\begin{equation*}
\begin{split}
J_3 & = \Big\| \int_0^{t} E (t-s)
     (I - E ( s-\lfloor {s} \rfloor_{\tau} ))  \dd W(s)
        \Big\|_{L^p(\Omega,\dot{H})}
 \\ & = \Big( \int_0^{t} \big\| A E (t-s) A^{ - \frac {\gamma} 2}
     (I - E ( s-\lfloor {s} \rfloor_{\tau} ))
     A^{ \frac {\gamma-2} 2} Q^{\frac 12} \big\|_{\mathcal{L}_2}^2  \dd s
        \Big)^{\frac12}
 \\ & \leq C \, \tau^{\frac \gamma 4}.
\end{split}
\end{equation*}
Therefore, the estimates of $J_1$, $J_2$ and $J_3$ imply
\begin{equation}\label{eq:order-1}
\| \widetilde{X}_t^{M,N} - X_t^{M,N} \|_{L^p(\Omega,\dot{H})}
   \leq C \, \tau^{\frac \gamma 4}.
\end{equation}
\newline
\noindent$\mathbf{Step 2:}$
Estimate of
$ \| X^N(t)-\widetilde{X}_t^{M,N} \|_{L^p(\Omega,\dot{H})}$.
\newline

For short, by $e^{M,N}(t)$ we denote $X^N(t)-\widetilde{X}_t^{M,N}$,
which satisfies
\begin{equation*}
\tfrac{\dd}{\dd t} e^{M,N}(t) + A^2 e^{M,N}(t)
   = A P_N \big( F(X_{t}^{M,N}) - F(X^N(t)) \big).
\end{equation*}
Multiplying $A^{-1} e^{M,N}(t)$ on both sides and using \eqref{eq:one-side-condition}, \eqref{eq:local-condition} and H\"{o}lder's inequality lead to
\begin{equation*}
\begin{split}
\tfrac12 & \tfrac{\dd}{\dd t} \vert e^{M,N}(t) \vert_{-1}^2
   + \vert e^{M,N}(t) \vert_{1}^2 =
    \langle e^{M,N}(t), F(X_t^{M,N}) - F(X^N(t)) \rangle
    \\ & = \langle e^{M,N}(t) , F (\widetilde{X}_t^{M,N}) - F (X^N(t)) \rangle
       + \langle e^{M,N}(t) , F (X_t^{M,N}) - F (\widetilde{X}_t^{M,N})\rangle
    \\ & \leq \tfrac32 \|e^{M,N}(t)\|^2 + \tfrac12
          \| F (X_t^{M,N}) - F (\widetilde{X}_t^{M,N}) \|^2
    \\ & \leq \tfrac12 \vert e^{M,N}(t) \vert_1^2 + \tfrac98 \vert e^{M,N}(t) \vert_{-1}^2
     \\ & \quad  + C  \| X_t^{M,N} - \widetilde{X}_t^{M,N} \|^2
        \big( 1 + \|X_t^{M,N}\|_V^4 + \|\widetilde{X}_t^{M,N} \|_V^4 \big).
\end{split}
\end{equation*}
Based on the Gronwall inequality and taking expectation, we achieve
\begin{equation*}
\Big\| \int_0^t \vert e^{M,N}(s) \vert_1^2 \dd s \Big\|_{L^p(\Omega,\mathbb{R})}
   \leq C \, \tau^{\frac{\gamma}2},
\end{equation*}
where the regularity of $\widetilde{X}_t^{M,N}$ and $X_t^{M,N}$ and
\eqref{eq:order-1} were used.
We decompose
$\| e^{M,N}(t) \|_{L^p( \Omega , \dot{H})}$ as follows,
\begin{equation*}
\begin{split}
\|e^{M,N}(t)\|_{L^p(\Omega,\dot{H})} & =
   \Big\| \int_0^t E(t-s) A \big( P_N F(X_s^{M,N}) - P_N F(X^N(s)) \big) \dd s
       \Big\|_{L^p(\Omega,\dot{H})}
     \\ & \leq \int_0^t \big\| E(t-s) A \big(  F(X_s^{M,N})
       - F(\widetilde{X}_s^{M,N}) \big) \big\|_{L^p(\Omega,\dot{H})} \dd s
     \\ & \quad + \Big\| \int_0^t E(t-s) A \big( F(\widetilde{X}_s^{M,N})
       - F(X^N(s)) \big)\dd s \Big\|_{L^p(\Omega,\dot{H})}
     \\ & =: K_1 + K_2.
\end{split}
\end{equation*}
Thanks to the regularity of $\widetilde{X}_t^{M,N}$ and $X_t^{M,N}$,
\eqref{I-spatial-temporal-S(t)} and \eqref{eq:order-1},
one can show
\begin{equation*}
\begin{split}
K_1 & \leq \int_0^t (t-s)^{-\frac12}
  \| F(X_s^{M,N}) - F(\widetilde{X}_s^{M,N}) \|_{L^p(\Omega,\dot{H})} \dd s
 \\ & \leq \int_0^t (t-s)^{-\frac12}
    \| X_s^{M,N} - \widetilde{X}_s^{M,N} \|_{L^{2p}(\Omega,\dot{H})}
  \\ & \quad \times \Big( 1 + \|\widetilde{X}_s^{M,N}\|_{L^{4p}(\Omega,V)}^2
            + \|X_s^{M,N}\|_{L^{4p}(\Omega,V)}^2 \Big) \dd s
 \\ & \leq C \, \tau^{\frac \gamma 4}.
\end{split}
\end{equation*}
Resorting to \eqref{I-spatial-temporal-S(t)}, Lemma \ref{lem:H-1}, the regularity of $\widetilde{X}_t^{M,N}$ and $X^{N}(t)$ and H\"{o}lder's inequality, we acquire for $\eta = \min \{ \gamma, \tfrac34 \}$,
\begin{equation*}
\begin{split}
K_2 & \leq C \Big\| \int_0^t (t-s)^{- \frac {2-\eta} {4} }
  \big\| A^{\frac{\eta}{2}} ( F(\widetilde{X}_s^{M,N}) - F(X^N(s)))\big\|\dd s
             \Big\|_{L^p(\Omega,\mathbb{R})}
 \\ & \leq C \Big\| \int_0^t (t-s)^{-\frac{2-\eta}{4}}
 \vert e^{M,N}(s) \vert_1
    \Big( 1 + \vert X^N(s) \vert_{\gamma}^2 + \vert \widetilde{X}_s^{M,N} \vert_{\gamma}^2
      \Big) \dd s \Big\|_{L^p(\Omega,\mathbb{R})}
 \\ & \leq C \Big\| \Big( \int_0^t \vert e^{M,N}(s) \vert_1^2 \dd s \Big)^{\frac12}
 \\ & \qquad \times
    \Big( \int_0^t (t-s)^{-\frac{2-\eta}{2}}
     \big( 1 + \vert X^N(s) \vert_{\gamma}^4 + \vert \widetilde{X}_s^{M,N} \vert_{\gamma}^4
      \big) \dd s \Big)^{\frac12} \Big\|_{L^p(\Omega,\mathbb{R})}
 \\ & \leq C \Big\| \int_0^t \vert e^{M,N}(s) \vert_1^2 \dd s
               \Big\|_{L^p(\Omega,\mathbb{R})}^{\frac12}
 \\ & \leq C \, \tau^{\frac \gamma 4}.
\end{split}
\end{equation*}
Collecting all the estimates obtained so far  finishes the proof.
\end{proof}

At last, gathering Theorem \ref{theo:strong-time-discretization} with Theorem \ref{th:space-rate},
we get the strong convergence rates of the fully discrete scheme \eqref{full;discrete version}.
\begin{corollary}[Strong convergence rates of the full discretization]
\label{coro:strong full discretization}
Let Assumptions \ref{assum:linear-operator-A}--\ref{assum:intial-value-data}
be satisfied.
Then for $p \geq 1$ it holds that
\begin{equation*}
\sup_{ M,N \in \N^{+} } \sup_{ m \in \{0,1,\ldots,M\} }
  \| X(t_m) - X_{t_m}^{M,N} \|_{ L^p (\Omega , \dot{H}) }
  \leq C ( \lambda_N^{-\frac {\gamma} 2} + \tau^{ \frac{\gamma}{4} } ),
  \, \gamma \in (\tfrac d2,4].
\end{equation*}
\end{corollary}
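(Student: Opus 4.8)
The plan is to combine the two convergence results already established --- the spatial rate in Theorem \ref{th:space-rate} and the temporal rate in Theorem \ref{theo:strong-time-discretization} --- by a single triangle inequality. The essential observation is that both the exact mild solution $X$ and the spectral Galerkin semi-discretization $X^N$ are defined for all $t \in [0,T]$, so their values at the discrete nodes $t_m \in [0,T]$ are merely particular instances already covered by the suprema over $t \in [0,T]$ in those two theorems.

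First I would insert $X^N(t_m)$ as an intermediate term and split the error as
\begin{equation*}
\| X(t_m) - X_{t_m}^{M,N} \|_{L^p(\Omega,\dot{H})}
  \leq \| X(t_m) - X^N(t_m) \|_{L^p(\Omega,\dot{H})}
     + \| X^N(t_m) - X_{t_m}^{M,N} \|_{L^p(\Omega,\dot{H})}.
\end{equation*}
The first (spatial) term is bounded using Theorem \ref{th:space-rate}, which asserts $\sup_{t\in[0,T]} \| X(t) - X^N(t) \|_{L^p(\Omega,\dot{H})} \leq C \lambda_N^{-\gamma/2}$ and in particular controls the value at $t = t_m$. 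The second (temporal) term is handled directly by Theorem \ref{theo:strong-time-discretization}, which provides $\sup_{M,N \in \N^{+}} \sup_{t\in[0,T]} \| X^N(t) - X_t^{M,N} \|_{L^p(\Omega,\dot{H})} \leq C \tau^{\gamma/4}$.

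Finally I would take the supremum over $m \in \{0,1,\ldots,M\}$ and over $M, N \in \N^{+}$ on both sides. Since the generic constants $C$ furnished by the two theorems are independent of the discretization parameters $M, N$ and of the index $m$, the two contributions simply add to yield the claimed estimate $C(\lambda_N^{-\gamma/2} + \tau^{\gamma/4})$ for every $\gamma \in (\tfrac d2, 4]$. I do not expect any genuine obstacle at this stage: all the substantive work --- the uniform a priori moment bounds of Theorem \ref{full;a priori estimate}, the regularity of $X_t^{M,N}$ in Corollary \ref{X_t;gamma bound}, and the delicate term-by-term estimates of $J_1$--$J_3$ and $K_1$--$K_2$ in the proof of Theorem \ref{theo:strong-time-discretization} --- has already been completed, so this corollary only assembles the two half-errors into the fully discrete rate.
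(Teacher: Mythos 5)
Your proposal is correct and coincides with the paper's own argument: the corollary is obtained precisely by inserting $X^N(t_m)$, applying the triangle inequality, and invoking Theorem \ref{th:space-rate} for the spatial error and Theorem \ref{theo:strong-time-discretization} for the temporal error, both of which hold uniformly over $t \in [0,T]$ and over $M, N \in \N^{+}$. Nothing further is needed.
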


\section{Numerical experiments}\label{sec:numerical-experiments}
In this section, we include some numerical results to confirm the above assertions.
 Consider the following one-dimensional stochastic Cahn--Hilliard equation:
\begin{align}\label{Numerical-example}
\begin{split}
\left\{\begin{array}{lllll}
\tfrac{\partial u}{\partial t}
=\tfrac{\partial^2 w}{\partial x^2} +\dot{W} ,&
(t,x)\in (0, T]\times (0,1),
\\
w = - \tfrac{\partial^2 u}{\partial x^2} + u^3 - u , & x \in (0,1),
\\
\tfrac{\partial u}{\partial x} \big\vert_{x=0}
 = \tfrac{\partial u}{\partial x} \big\vert_{x=1}=0,&t\in(0, T],
 \\
\tfrac{\partial w}{\partial x} \big\vert_{x=0}
 = \tfrac{\partial w}{\partial x} \big\vert_{x=1}=0,&t\in(0, T],
\end{array}\right.
\end{split}
\end{align}
where $\{W(t)\}_{t\in[0,T]}$ is a  $Q$-Wiener process and the
orthonormal eigensystem $ \{ \lambda_j,e_j \}_{j \in \N+}$ of the Neumann Laplacian on $\dot{H}$ is
\begin{equation*}
\lambda_j = j^2 \pi^2, \quad e_j(x) = \sqrt{2} \text{cos}(j \pi x), \quad j \ge 1.
\end{equation*}
Firstly, we approximate \eqref{Numerical-example} by using the (non-tamed) exponential Euler method, given by
\begin{equation}\label{eem}
X_{t_{m+1}}^{M,N}  =  E(\tau) X_{t_m}^{M,N}
- \int_{t_m}^{t_{m+1}} E(t_{m+1}-s)AP_NF(X_{t_m}^{M,N}) \dd s
+ E (\tau) P_N \Delta W_m.
\end{equation}
Table \ref{tab:1} shows Monte Carlo simulations of the first moment $\E[ \| X_T^{M,N} \| ]$ of the exponential Euler approximation \eqref{eem} with the initial value $u(0,x)= 20 \sqrt{2}\cos(\pi x), \, x\in (0,1)$ and $N=100$,
where one can observe that $\E[ \| X_T^{M,N} \| ]$ tends to positive infinity rapidly  as $M$ increases.
Here the value `Inf' represents positive infinity  and `NaN' represents `not-a-number' because of an operation `Inf-Inf'.
On the contrary, the tamed exponential Euler method works well
and does not explode for all $M$.
\begin{table}
\centering
\caption{Simulations of the first absolute moment
 $\E[ \| X_T^{M,N} \| ]$ with $ M \in \{ 1,2,\cdots,20 \}$.}
\label{tab:1}       
\begin{tabular}{llll}
\hline\noalign{\smallskip}
$M$ & $\E[ \| X_T^{M,N} \| ]$ & $M$ & $\E[ \| X_T^{M,N} \| ]$ \\
\noalign{\smallskip}\hline\noalign{\smallskip}
  $M=1$ & 22.2175    & $M=7$  & 3.7510e+73 \\
  $M=2$ & 34.1425    & $M=8$  & Inf \\
  $M=3$ & 132.9797   & $M=9$  & NaN \\
  $M=4$ & 8.1205e+03 & $M=10$  & NaN \\
  $M=5$ & 1.8550e+09 & $\cdots$ & $\cdots$  \\
  $M=6$ & 2.2128e+25 & $M=20$ & NaN \\
\noalign{\smallskip}\hline
\end{tabular}
\end{table}

Next, we will show the convergence rates
of the tamed exponential Euler method as obtained in Theorem \ref{theo:strong-time-discretization}.
For this purpose, we use the fully discrete method \eqref{full;discrete version} to solve \eqref{Numerical-example} with  $u(0,x)= \sqrt{2}\cos(\pi x), \, x\in (0,1)$.
The error bounds are measured in the mean-square sense at the endpoint $T=1$.
Note that the expectations are approximated by computing averages over 1000 samples.
Since the exact solutions are not available at hand, fixing $N=500$, the reference solution is identified with a very small time stepsize $\tau_{exact} = 2^{-16}$.
Four different time stepsizes $\tau=2^{-j}, j=9,10,11,12$ are then used to
carry out the numerical simulations.

\begin{figure}[!htb]
  \centering
  \begin{varwidth}[t]{\textwidth}
  \includegraphics[width=3in]{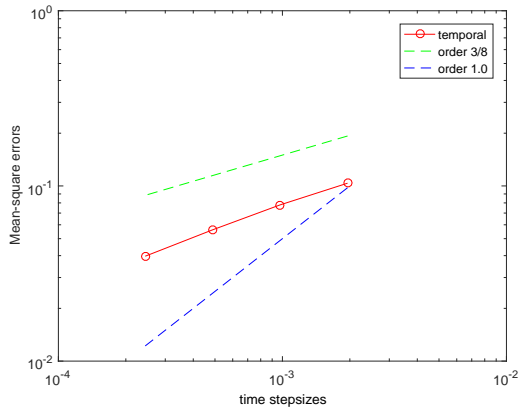}
  \end{varwidth}
  \caption{ Strong convergence rate of the tamed Euler method (white noise).}
 \label{F1}
 \end{figure}

\begin{figure}[!htb]
  \centering
  \begin{varwidth}[t]{\textwidth}
  \includegraphics[width=3in]{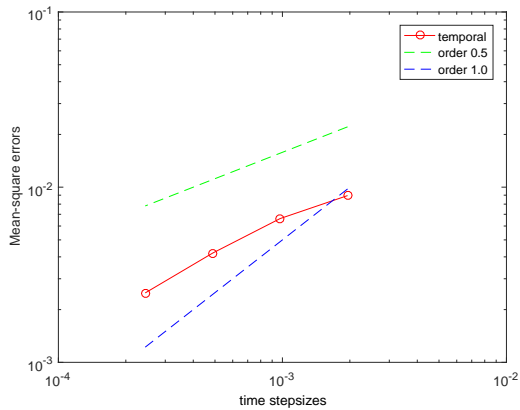}
  \end{varwidth}
  \caption{ Strong convergence rate of the tamed Euler method
     (trace-class noise).}
 \label{F2}
\end{figure}

We are now ready to make some explanations on the numerical results.
For the white noise case (i.e., $Q=I$),
the condition \eqref{eq:ass-AQ-condition} in Assumption \ref{assum:noise-term} is then fulfilled with $\gamma$ closing to $\tfrac32$ and the convergence order obtained in Theorem \ref{theo:strong-time-discretization} is almost $\tfrac38$.
The mean-square errors  are depicted in Fig. \ref{F1}, against $\tau$
on a log-log scale, where one can observe that the resulting numerical errors decrease at a slope close to $\tfrac38$.
This coincides with the theoretical result.
For the trace-class noise case,
we choose $Q$ such that
\begin{equation}\label{Q}
Q e_1=0, \, Q e_i = \frac{1}{i~ \text{log}(i)^2} e_i,\, \forall i \geq 2.
\end{equation}
Obviously, \eqref{Q} guarantees $\text{Tr}(Q)<\infty$ and thus the condition
\eqref{eq:ass-AQ-condition} is satisfied  with $\gamma = 2$.
As expected, the convergence rate of order $\tfrac12$ is detected in
Fig. \ref{F2}, which is consistent with the finding in Theorem \ref{theo:strong-time-discretization}.
For the smoother noise,  $Q$ is then chosen to satisfy
\begin{equation*}
Q e_1=0, \, Q e_i = \frac{1}{i^5~ \text{log}(i)^2} e_i,\, \forall i \geq 2.
\end{equation*}
In this case, condition \eqref{eq:ass-AQ-condition} holds with $\gamma=4$ and the obtained convergence rate in theory is 1.
From Fig. \ref{F3}, it is obvious to find that the approximation errors decrease with order 1, which agrees with the theoretical result.

\begin{figure}[!htb]
  \centering
  \begin{varwidth}[t]{\textwidth}
  \includegraphics[width=3in]{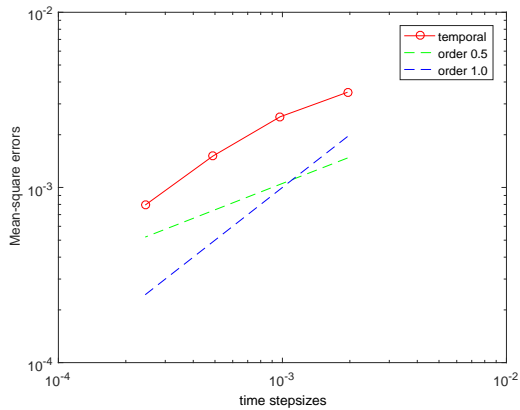}
  \end{varwidth}
  \caption{ Strong convergence rate of the tamed Euler method
     (smoother noise).}
 \label{F3}
\end{figure}

\begin{figure}[!htb]
  \centering
  \begin{varwidth}[t]{\textwidth}
  \includegraphics[width=3in]{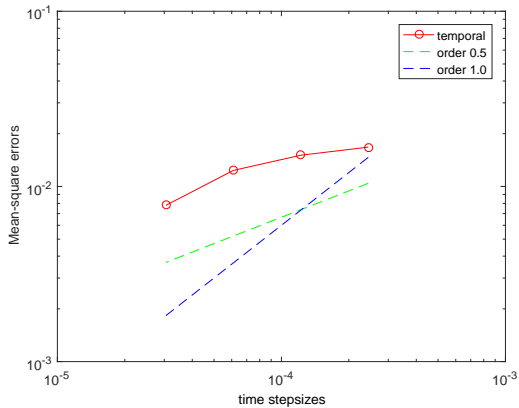}
  \end{varwidth}
  \caption{ Strong convergence rate of the tamed Euler method for $d=2$
     (trace-class noise).}
 \label{F4}
\end{figure}

Moreover, we compare the error  between the tamed exponential Euler method \eqref{full;discrete version} and the backward Euler method.
Based on the simulations over 1000 samples, Table \ref{tab:2}
lists the approximation errors of these two schemes
for five different temporal stepsizes.
Clearly, both schemes give satisfactory accuracy.
However, the backward Euler method needs to
solve a large nonlinear algebraic system by certain iteration
and thus costs more computational efforts than the tamed exponential Euler method.
\begin{table}
\centering
\caption{Comparison of tamed exponential Euler method (TEEM) and  backward Euler method (BEM).}
\label{tab:2}       
\begin{tabular}{lcc}
\hline\noalign{\smallskip}
Stepsizes &  Error (TEEM) & Error (BEM) \\
\noalign{\smallskip}\hline\noalign{\smallskip}
  $\tau=2^{-9}$  & 0.0195 & 0.0155  \\
  $\tau=2^{-10}$ & 0.0145 & 0.0116  \\
  $\tau=2^{-11}$ & 0.0101 & 0.0083  \\
  $\tau=2^{-12}$ & 0.0069 & 0.0058  \\
  $\tau=2^{-13}$ & 0.0042 & 0.0037  \\
\noalign{\smallskip}\hline
\end{tabular}
\end{table}

\begin{figure}[!htb]
  \centering
  \begin{varwidth}[t]{\textwidth}
  \includegraphics[width=3in]{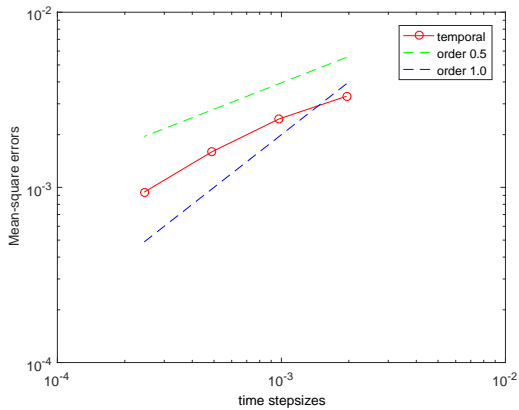}
  \end{varwidth}
  \caption{ Strong convergence rate of the tamed Euler method  without commutative condition.}
 \label{F5}
\end{figure}
For the multi-dimensional case $d=2$, the resulting errors for trace-class noise \eqref{Q} are plotted in Fig. \ref{F4} on a log-log scale, where one can also detect
the expected convergence rate.
Note that the above numerical experiments are performed under the
commutativity of $A$ and $Q$.
Next, by choosing
\begin{equation*}
Q \eta_1=0, \, Q \eta_i = \frac{1}{i~ \text{log}(i)^2} e_i,\,
  \eta_k(x)= \sqrt{2} \text{sin}(k \pi x), \, \forall i \geq 2,\,
  k \ge 1,
\end{equation*}
we attempt to illustrate the error bounds for the fully discrete scheme \eqref{full;discrete version} without the commutative condition of $A$ and $Q$.
From Fig. \ref{F5}, one can observe the expected convergence rate of order $\frac 12$, which agrees with that indicated in Theorem \ref{theo:strong-time-discretization}.
Finally, we mention an interesting circulant embedding approach to the noise sampling recently proposed by \cite{lord2022piecewise}.
We leave it a future work together with some necessary analysis.

\section*{Acknowledgments}
The first author was supported by NSF of China (No. 11971488).
The second author was supported by NSF of China (No. 11701073).
The third author was supported by NSF of China (No. 12071488 and No. 12371417) and NSF of Hunan province (No. 2020JJ2040).
The authors would like to thank the anonymous referees
for valuable comments and suggestions in improving this article.
Great thanks also go to Dr. Xinjie Dai for the help on the numerical experiments.

\section*{Declarations}

{\small { \bf Conflict of interest:} This work does not have any conflicts of interest.}

\end{document}